\theoremstyle{definition}
\newtheorem{thm}{Theorem}[section]
\newtheorem{definition}[thm]{Definition}
\newtheorem{lemma}[thm]{Lemma}
\newtheorem{definitionlemma}[thm]{Definition-Lemma}
\newtheorem{cor}[thm]{Corollary}
\newtheorem{prop}[thm]{Proposition}
\newtheorem{claim}[thm]{Claim}
\newtheorem{remark}[thm]{Remark}
\newtheorem{example}[thm]{Example}
\newtheorem{fact}[thm]{Fact}
\newtheorem{observation}[thm]{Observation}
\newtheorem{conjecture}[thm]{Conjecture}
\newtheorem{tournantdangereux}[thm]{Tournant Dangereux}
\renewcommand{\theequation}{\Roman{equation}}
\newcommand\N{\mathbb{N}}
\newcommand\Z{\mathbb{Z}}
\newcommand\Q{\mathbb{Q}}
\newcommand\R{\mathbb{R}}
\newcommand\M{\mathbb{M}}
\DeclareMathOperator\id{id}
\newcommand\T{\mathbb{T}}
\DeclareMathOperator\sign{sign}
\DeclareMathOperator\Th{Th}
\DeclareMathOperator\ded{ded}
\DeclareMathOperator\sded{sded}
\newcommand\GAMMA{\mathbb{M}}
\DeclareMathOperator\tp{tp}
\DeclareMathOperator\acl{acl}
\author{Allen Gehret}
\title{NIP for the Asymptotic Couple of the Field of Logarithmic Transseries}
\email{agehret2@illinois.edu}
\address{Department of Mathematics, University of Illinois at Urbana-Champaign, Urbana, Illinois 61801}
\date{\today}
\keywords{Asymptotic Couples; Asymptotic Integration; Logarithmic Transseries; Independence Property; Contraction Groups}
\begin{document}

\maketitle

\begin{abstract}
The derivation on the differential-valued field $\mathbb{T}_{\log}$ of logarithmic transseries induces on its value group $\Gamma_{\log}$ a certain map $\psi$. The structure $\Gamma = (\Gamma_{\log},\psi)$ is a divisible asymptotic couple. In~\cite{gehret} we began a study of the first-order theory of $(\Gamma_{\log},\psi)$ where, among other things, we proved that the theory $T_{\log} = \Th(\Gamma_{\log},\psi)$ has a universal axiomatization, is model complete and admits elimination of quantifiers (QE) in a natural first-order language. In that paper we posed the question whether $T_{\log}$ has NIP (i.e., the Non-Independence Property). In this paper, we answer that question in the affirmative: $T_{\log}$ does have NIP. Our method of proof relies on a complete survey of the $1$-types of $T_{\log}$, which, in the presence of QE, is equivalent to a characterization of all simple extensions $\Gamma\langle\alpha\rangle$ of $\Gamma$. We also show that $T_{\log}$ does not have the Steinitz exchange property and we weigh in on the relationship between models of $T_{\log}$ and the so-called \emph{precontraction groups} of~\cite{kuhlmann1}.
\end{abstract}

\setcounter{tocdepth}{1}
\tableofcontents


\section{Introduction}
\label{introduction}

\medskip\noindent
In~\cite{gehret} we began a study of the model-theoretic and algebraic properties of $(\Gamma_{\log},\psi)$, the asymptotic couple of the differential-valued field $\T_{\log}$ of logarithmic transseries. This paper is intended to be its sequel. Here we give a complete survey of the space of 1-types over a model of the theory $T_{\log} = \Th(\Gamma_{\log},\psi)$ and use that to show that $T_{\log}$ has the Non-Independence Property (NIP), largely settling a question we raised in~\cite[\S 8]{gehret}.

\medskip\noindent
Throughout, $m$ and $n$ range over $\N=\{0,1,2,\dots\}$. 
As usual, $\Z$ is the ring of integers, $\Q$ is the field of rational numbers, and $\R$ is the field of real numbers. 
In this paper, like its prequel~\cite{gehret}, we study asymptotic couples such as $(\Gamma_{\log},\psi)$ as independent objects of interest, completely removed from any differential-valued fields from which they may arise.
A complete discussion of differential-valued fields such as $\T_{\log}$ and how they give rise to asymptotic couples is outside the scope of this paper.
We refer the interested reader to~\cite{mt} for the complete story as to how asymptotic couples such as $(\Gamma_{\log},\psi)$ fit into the broader ecosystem of asymptotic differential algebra.
For the reader's convenience, we begin with a definition of $(\Gamma_{\log},\psi)$, completely independent of $\T_{\log}$:

\medskip\noindent
Let $\bigoplus_n \R e_n$ be a vector space over $\R$ with basis $(e_n)$. Then $\bigoplus_n \R e_n$ can be
 made into an ordered group using the usual lexicographic order, i.e., by requiring for nonzero
$\sum_i r_i e_i$ that
\[
\sum r_i e_i >0\ \Longleftrightarrow\ r_n>0\  \text{\ for the least $n$ such that $r_n\ne 0$}.
\]
Let $\Gamma_{\log}$ be the above ordered abelian group $\bigoplus_n \R e_n$. 
It is often convenient to think of an element $\sum r_ie_i$ as the vector $(r_0,r_1,r_2,\ldots)$.
For an arbitrary ordered abelian group $\Gamma$ we set
$\Gamma^{\ne}:= \Gamma\setminus \{0\}$. 
We follow Rosenlicht~\cite{differentialvaluationII}
in taking the function 
\[
\psi: \Gamma_{\log}^{\ne} \to \Gamma_{\log}
\]
defined by
\[
(\underbrace{0,\ldots,0}_{n},\underbrace{r_n}_{\neq 0},r_{n+1},\ldots) \mapsto (\underbrace{1,\ldots,1}_{n+1},0,0,\ldots)
\]
as a new primitive, calling the pair $(\Gamma_{\log}, \psi)$ an \emph{asymptotic
couple} (the asymptotic couple of $\T_{\log}$).

\medskip\noindent
In Figure~\ref{StandardModel} we attempt to visualize the asymptotic couple $(\Gamma_{\log},\psi)$. As with any dense linear order, we can picture the underlying divisible ordered abelian group $\Gamma_{\log}$ as an infinite line stretching from left to right. Additionally we include a distinguished vertical stick to indicate the location of $0 = (0,0,0,\ldots)$. To represent the important subset $\Psi_{\log} = \psi(\Gamma_{\log}^{\neq})$, we draw a collection of vertical sticks to the right of $0$. The convergent and shrinking nature of this collection is intended to suggest that both
\begin{enumerate}[(a)]
\item the induced ordering $(\Psi_{\log},<)$ is isomorphic to that of the natural numbers $(\N,<)$, and
\item the distance between two adjacent sticks is much bigger than the distance between the next two adjacent sticks.
\end{enumerate}
Indeed, the difference between, say, the first and second elements of $\Psi_{\log}$ is \[
(1,1,0,\ldots)-(1,0,\ldots) = (0,1,0,\ldots)
\]
 which is infinitely larger (i.e., is a member of a larger \emph{archimedean class}, a notion defined in~\ref{orderedsetconventions} below) than the difference between the second and third elements of $\Psi_{\log}$, which is 
 \[
 (1,1,1,0,\ldots) - (1,1,0,\ldots) = (0,0,1,0,\ldots).
 \]

\begin{figure}[!htbp]
\centering
\caption{Illustration of $(\Gamma_{\log},\psi)$}
\label{StandardModel}
\resizebox{17cm}{!}{
\includegraphics{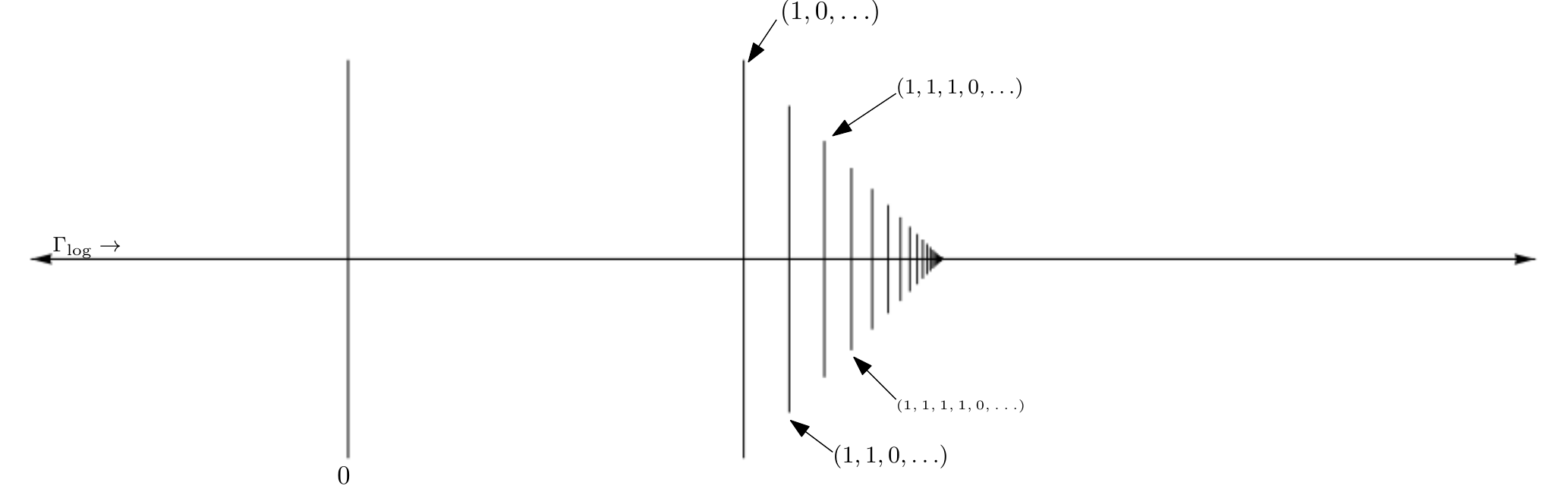}
}
\end{figure}

\noindent
Most of our intuition for this structure and its elementary extensions comes from drawing pictures of this form (for example, see Figure~\ref{TlogElementaryExt}). Our choice of drawing the infinite set $\Psi_{\log}$ in this way was inspired by the illustrations from~\cite[Ch. 10]{conway}.

\medskip\noindent
In~\cite{gehret} we gave a complete axiomatization for the first-order theory $\Th(\Gamma_{\log},\psi)$ and proved that it is model complete; see Definition~\ref{T0def} below. 
This followed from exhibiting quantifier elimination for $\Th(\Gamma_{\log},\psi)$ in a natural language $L_{\log}$ which we recall in Section~\ref{simpleextensions}. 
Finally, we showed that the discrete subset $\Psi_{\log}$ is stably embedded in the structure $(\Gamma,\psi)$.
In this paper, we continue our study of the model-theory of $(\Gamma_{\log},\psi)$ by demonstrating that it has NIP, a form of model-theoretic tameness.

\medskip\noindent
In Section~\ref{moreasymptoticintegration}, we recall from~\cite{gehret} some definitions and elementary properties relating to $H$-asymptotic couples and we introduce a few preliminary ideas mostly in the generality of divisible $H$-asymptotic couples with asymptotic integration, construed as $L_{AC}$-structures, where $L_{AC}$ is the natural language of asymptotic couples. This section can be viewed as a continuation of Sections 3 and 4 from~\cite{gehret}. The main idea from this section to be used later is Lemma~\ref{rhoZ}, a new embedding lemma that adds transfinitely many ``copies of $\Z$'' to an existing $\Psi$-set. The fact that one can do the construction as in Lemma~\ref{rhoZ} is already apparent from~\cite[Lemmas 4.11, 4.12]{gehret}, but we make this construction explicit because of its utility in classifying simple extensions in Section~\ref{sectionsimpleextensions}.

\medskip\noindent
In Section~\ref{sectionsimpleextensions}, we specialize to models of $T_{\log} = \Th(\Gamma_{\log},\psi)$ in an enriched language $L_{\log}$. There we prove Theorem~\ref{simpleextensions} which gives all the possibilities for the isomorphism types of simple extensions $\Gamma\langle\alpha\rangle$ for models $\Gamma\models T_{\log}$. In the presence of quantifier elimination, this is the same thing as giving all the possibilities for 1-types. Roughly speaking, we show that all simple extensions are controlled by at most countably many Dedekind cuts of a certain form in the set $\Psi$.

\medskip\noindent
In Section~\ref{examples}, we give explicit examples of the various possibilities of simple extensions mentioned in Theorem~\ref{simpleextensions}. This shows that Theorem~\ref{simpleextensions} doesn't merely place a bound on the possibilities of simple extensions, but really does give precisely those simple extensions that actually occur.

\medskip\noindent
In Section~\ref{countingtypes} we derive Corollary~\ref{countingtypescor} from Theorem~\ref{simpleextensions} which says that the number of 1-types over a model of size $\kappa$ is bounded by the cardinal $\ded(\kappa)^{\aleph_0}$ (where $\ded(\kappa)$ is defined in~\ref{settheoryconventions} below).
 
\medskip\noindent
In Section~\ref{NIPsettheory}, we give the definition of the model-theoretic notion of NIP and prove NIP for $T_{\log}$ using a counting-types and absoluteness swindle.  It is a fact that theories with the independence property (IP) always have $2^{\kappa}$-many 1-types over a model of size $\kappa$. By a forcing result of Mitchell~\cite{mitchell}, it is consistent with ZFC that $\ded(\kappa)^{\aleph_0}<2^{\kappa}$ for some cardinal $\kappa$ and so the theory $T_{\log}$ must have NIP. Our basic references for NIP are~\cite{simonNIP} and~\cite{adlerNIP}.

\medskip\noindent
In Section~\ref{otherresults}, we tie up some loose ends and raise an additional question. In particular, we show that the theory of $T_{\log}$ does not have the so-called \emph{Steinitz exchange property}. This follows from the ideas in Section~\ref{sectionsimpleextensions}. We also demonstrate a way to produce new $\psi$-maps given a divisible $H$-asymptotic couple $(\Gamma,\psi)$ with asymptotic integration. Finally, we weigh in on the relationship between divisible $H$-asymptotic couples with asymptotic integration and the divisible precontraction groups of Kuhlmann (see~\cite{kuhlmann1,kuhlmann2}). In parallel with~\cite[\S 5]{someremarks}, we show that it is impossible to definably reconstruct the $\psi$-map of a model of $T_{\log}$ from the underlying precontraction group.

\medskip\noindent
Finally, in Section~\ref{conclusion} we give a list of remaining questions and issues.

\subsection{Set Theory Conventions}
\label{settheoryconventions}

We assume the reader is familiar with the basic concepts and definitions from set theory (for example, see~\cite{kunen} or~\cite{jech}). Throughout, $\kappa,\lambda$ will denote infinite cardinals and $\eta,\nu$ will denote (possibly finite) ordinals. We define 
\[
\ded(\kappa) := \sup\{\lambda:\text{there is a linear order of size $\lambda$ which has a dense subset of size $\kappa$}\},
\]
where \emph{dense} is in the sense of the usual order topology.
In general we have that $\kappa<\ded(\kappa)\leq \ded(\kappa)^{\aleph_0}\leq 2^{\kappa}$ for all $\kappa$ with equality if $\kappa = \aleph_0$. Furthermore, $\ded(\kappa)\leq \ded(\lambda)$ if $\kappa\leq\lambda$.

\subsection{Ordered Set Conventions}
\label{orderedsetconventions}

By  ``ordered set'' we mean ``totally ordered set''.

\medskip\noindent
Let $S$ be an ordered set. Below, the ordering on $S$ will be denoted by $\leq$, and a subset of $S$ is viewed as ordered by the induced ordering. We put $S_{\infty}:= S\cup\{\infty\}$, $\infty\not\in S$, with the ordering on $S$ extended to a (total) ordering on $S_{\infty}$ by $S<\infty$.  Suppose that $B$ is a subset of an ordered set extending  $S$. We put $S^{>B}:=\{s\in S:s>b\text{ for every $b\in B$}\}$ and we denote $S^{>\{a\}}$ as just $S^{>a}$; similarly for $\geq, <,$ and $\leq$ instead of $>$. For $a,b\in S\cup\{\infty\}$ and $B\subseteq S$ we put
\[
[a,b]_{B}:=\{x\in B: a\leq x\leq b\}.
\]
If $B=S$, then we usually write $[a,b]$ instead of $[a,b]_S$. Given subsets $S_0,S_1\subseteq S$, we say the pair $(S_0,S_1)$ is a \textbf{cut in} $S$, if $S_0 = S^{<S_1}$ and $S_1=S^{>S_0}$ and we say that an element $x$ of an ordered set extending $S$ \textbf{realizes the cut} $(S_0,S_1)$ if $S_0 = S^{<x}$ and $S_1 = S^{>x}$. We say that $S$ is a \textbf{successor set} if every element $x\in S$ has an \textbf{immediate successor} $y\in S$, that is, $x<y$ and for all $z\in S$, if $x<z$, then $y\leq z$. For example, $\N$ and $\Z$ with their usual ordering are successor sets.

\medskip\noindent
We say that $S$ is a \textbf{copy of $\Z$} (respectively, \textbf{copy of $\N$}) if $(S,<)$ is isomorphic to $(\Z,<)$ (respectively, $(\N,<)$).

\medskip\noindent
Suppose that $G$ is an ordered abelian group. Then we set $G^{\neq}:=G\setminus\{0\}$, $G^{<}:= G^{<0}$ and $G^{>}:= G^{>0}$. We define $|g| := \max\{g,-g\}$ for $g\in G$. For $a\in G$, the \textbf{archimedean class} of $a$ is defined by
\[
[a] := \{g\in G: |a|\leq n|g|\text{ and }|g|\leq n|a|\text{ for some }n\geq 1\}.
\]
The archimedean classes partition $G$. Each archimedean class $[a]$ with $a\neq 0$ is the disjoint union of the two convex sets $[a]\cap G^{<}$ and $[a]\cap G^{>}$. We order the set $[G]:=\{[a]:a\in G\}$ of archimedean classes by
\[
[a]<[b] :\Longleftrightarrow n|a|<|b|\text{ for all }n\geq 1.
\]
We have $[0]<[a]$ for all $a\in G^{\neq}$, and
\[
[a]\leq[b] :\Longleftrightarrow |a|\leq n|b| \text{ for some } n\geq 1.
\]
We say that $G$ is \textbf{archimedean} if $[G^{\neq}]:=[G]\setminus\{[0]\}$ is a singleton.

\subsection{Model Theory Conventions}

Throughout $L$ will denote a one-sorted language and $T$ will be a complete $L$-theory with infinite models. 
We will work in this general setting when discussing model-theoretic issues (such as NIP). 
We will often consider a model $\M\models T$ and a cardinal $\kappa(\M)>|L|$ such that $\M$ is $\kappa(\M)$-saturated and strongly $\kappa(\M)$-homogeneous. 
Such a model is called a \textbf{monster model} of $T$. 
In particular, every model of $T$ of size $\leq \kappa(\M)$ has an elementary embedding into $\M$. 
``Small'' will mean ``of size $<\kappa(\M)$''. 
$A$ will always denote a \emph{small} parameter set in $\M$. 
If $M$ is a parameter set underlying an elementary submodel of $\M$, then we denote this elementary submodel also by $M$. 
For a parameter set $A$, we let $\langle A\rangle$ denote the $L$-substructure of $\M$ generated by $A$. 
Similarly we let $M\langle A\rangle$ denote $\langle M\cup A\rangle$. 
Note that if $T$ has a universal axiomatization and is model complete, then $\langle A\rangle$ is always a small elementary substructure of $\M$.
We let $S^n(A)$ denote the space of $n$-types over $A$.

\section{More Asymptotic Integration}
\label{moreasymptoticintegration}

\subsection{Asymptotic Couples}
In general, an \textbf{asymptotic couple} is a pair
$(\Gamma, \psi)$ where $\Gamma$ is an ordered abelian group and
$\psi: \Gamma^{\ne} \to \Gamma$ satisfies for all $\alpha,\beta\in\Gamma^{\neq}$,
\begin{itemize}
\item[(AC1)] $\alpha+\beta\neq 0 \Longrightarrow \psi(\alpha+\beta)\geq \min(\psi(\alpha),\psi(\beta))$;
\item[(AC2)] $\psi(r\alpha) = \psi(\alpha)$ for all $r\in\Z^{\neq}$, in particular, $\psi(-\alpha) = \psi(\alpha)$;
\item[(AC3)] $\alpha>0 \Longrightarrow \alpha+\psi(\alpha)>\psi(\beta)$.
\end{itemize}
If in addition for all $\alpha,\beta\in\Gamma$,
\begin{itemize}
\item[(HC)] $0<\alpha\leq\beta\Rightarrow \psi(\alpha)\geq \psi(\beta)$,
\end{itemize}
then $(\Gamma,\psi)$ is said to be of \textbf{$H$-type}, or to be an \textbf{$H$-asymptotic couple}

\medskip\noindent
The primary example of an $H$-asymptotic couple is the object $(\Gamma_{\log},\psi)$ defined in Section~\ref{introduction}. Asymptotic couples were introduced by Rosenlicht in~\cite{differentialvaluation1,differentialvaluations,differentialvaluationII} to study \emph{differential-valued fields}. The prefix $H$ in ``$H$-asymptotic couple'' is in honor of the pioneers of the subject: Borel, Hahn, Hardy, and Hausdorff.

\medskip\noindent
Asymptotic couples commonly show up in nature as the value groups of certain kinds of valued differential fields (the so-called \emph{asymptotic fields}). In this case, the map $\psi:\Gamma^{\neq}\to\Gamma$ is induced by the logarithmic derivative on the field and the map $\id+\psi:\Gamma^{\neq}\to\Gamma$ is induced by the derivative. This is the motivation for the terminology ``asymptotic integration'' as well as the notations $\alpha^{\dagger}$ and $\alpha'$ (all introduced below). For the complete story see~\cite{mt}.

\medskip\noindent
\emph{For the rest of this subsection $(\Gamma,\psi)$ will be an arbitrary asymptotic couple and $\alpha,\beta$ will range over $\Gamma$.} By convention we extend $\psi$ to all of $\Gamma$ by setting $\psi(0):=\infty$. Then $\psi(\alpha+\beta)\geq\min(\psi(\alpha),\psi(\beta))$ holds for all $\alpha,\beta\in\Gamma$, and construe $\psi:\Gamma\to\Gamma_{\infty}$ as a (non-surjective) valuation on the abelian group $\Gamma$. If $(\Gamma,\psi)$ is of $H$-type, then this valuation is convex. The following property of valuations is immediate and will be used often:

\begin{fact}
\label{valuationfact}
If $\psi(\alpha)<\psi(\beta)$, then $\psi(\alpha+\beta) = \psi(\alpha)$.
\end{fact}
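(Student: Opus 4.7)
The plan is to treat $\psi$ as a (non-surjective) valuation and mimic the standard ``ultrametric with distinct values forces the maximum'' argument, using (AC1) and (AC2) as the only substantive ingredients. First I would dispose of degenerate cases so that (AC1) can be applied. Note that $\psi(\alpha)<\psi(\beta)\leq\infty$ forces $\alpha\neq 0$. If $\beta=0$ then $\alpha+\beta=\alpha$ and there is nothing to show. If $\alpha+\beta=0$ then $\beta=-\alpha$, so by (AC2) we would have $\psi(\beta)=\psi(-\alpha)=\psi(\alpha)$, contradicting the hypothesis. So we may assume $\alpha,\beta,\alpha+\beta$ are all nonzero.

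Next I would establish the inequality $\psi(\alpha+\beta)\geq\psi(\alpha)$ by a direct application of (AC1):
\[
\psi(\alpha+\beta)\;\geq\;\min\bigl(\psi(\alpha),\psi(\beta)\bigr)\;=\;\psi(\alpha),
\]
since $\psi(\alpha)<\psi(\beta)$.

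For the reverse inequality I would rewrite $\alpha=(\alpha+\beta)+(-\beta)$. Since $\alpha+\beta$ and $-\beta$ are both nonzero and sum to the nonzero element $\alpha$, (AC1) applies and gives
\[
\psi(\alpha)\;\geq\;\min\bigl(\psi(\alpha+\beta),\psi(-\beta)\bigr)\;=\;\min\bigl(\psi(\alpha+\beta),\psi(\beta)\bigr),
\]
where the last equality uses (AC2). Because $\psi(\alpha)<\psi(\beta)$, the minimum cannot be $\psi(\beta)$, so it must be $\psi(\alpha+\beta)$, yielding $\psi(\alpha+\beta)\leq\psi(\alpha)$. Combining with the previous step gives equality.

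There is essentially no ``hard part'' here; this is the usual valuation trick, and the only issue to be careful about is the bookkeeping that lets us invoke (AC1), which requires the relevant sums to be nonzero. That bookkeeping is handled by the case split at the start of the argument, and the role of (AC2) is just to swap $\psi(-\beta)$ for $\psi(\beta)$ in the decomposition $\alpha=(\alpha+\beta)+(-\beta)$.
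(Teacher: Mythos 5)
Your proof is correct and is exactly the standard valuation-theoretic argument the paper has in mind when it declares the fact ``immediate'' without giving a proof. The case bookkeeping (handling $\beta=0$ and $\alpha+\beta=0$ before invoking (AC1)) and the decomposition $\alpha=(\alpha+\beta)+(-\beta)$ are precisely what is needed, and you use nothing beyond (AC1), (AC2), and the convention $\psi(0)=\infty$.
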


\medskip\noindent
Let $L_{AC}$ be the natural language of asymptotic couples; $L_{AC} = \{0,+,-,<,\psi,\infty\}$ where $0,\infty$ are constant symbols, $+$ is a binary function symbol, $-, \psi$ are unary function symbols and $<$ is a binary relation symbol. We consider an asymptotic couple $(\Gamma,\psi)$ as an $L_{AC}$-structure with underlying set $\Gamma_{\infty}$ and the obvious interpretation of the symbols of $L_{AC}$, with $\infty$ as a default value:
\[
-\infty = \gamma+\infty = \infty+\gamma=\infty+\infty = \psi(0) = \psi(\infty) = \infty
\]
for all $\gamma\in\Gamma$.

\medskip\noindent
For $\alpha\in\Gamma^{\neq}$ we shall also use the following notation:
\[
\alpha^{\dagger}:= \psi(\alpha), \quad \alpha':=\alpha+\psi(\alpha).
\]
The following subsets of $\Gamma$ play special roles:
\[
(\Gamma^{\neq})' := \{\gamma':\gamma\in\Gamma^{\neq}\}, \quad (\Gamma^{>})' := \{\gamma':\gamma\in\Gamma^{>}\},
\]
\[
\Psi := \psi(\Gamma^{\neq}) = \{\gamma^{\dagger}:\gamma\in\Gamma^{\neq}\} = \{\gamma^{\dagger}:\gamma\in\Gamma^{>}\}.
\]

\noindent
For an arbitrary asymptotic couple $(\Gamma',\psi')$ we may occasionally refer to the set $\Psi_{\Gamma'}:=\psi'((\Gamma')^{\neq})$ as ``the $\Psi$-set of $(\Gamma',\psi')$'' and to the function $\psi'$ as ``the $\psi$-map of $(\Gamma',\psi')$''.

\medskip\noindent
Note that by (AC3) we have $\Psi<(\Gamma^{>})'$. It is also the case that $(\Gamma^{<})'<(\Gamma^{>})'$:

\begin{lemma}
The map $\gamma\mapsto \gamma' = \gamma+\psi(\gamma):\Gamma^{\neq}\to\Gamma^{\neq}$ is strictly increasing. In particular:
\begin{enumerate}
\item $(\Gamma^{<})'<(\Gamma^{>})'$, and
\item for $\beta\in\Gamma$ there is at most one $\alpha\in\Gamma^{\neq}$ such that $\alpha' = \beta$.
\end{enumerate}
\end{lemma}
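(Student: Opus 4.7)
The plan is to prove the strict-increasing assertion directly; once that is established, (1) follows by picking any $\alpha\in\Gamma^{<}$ and $\beta\in\Gamma^{>}$ (so $\alpha<\beta$ and hence $\alpha'<\beta'$), and (2) is just the injectivity consequence of any strictly increasing function. So the entire task is: assuming $\alpha<\beta$ with $\alpha,\beta\in\Gamma^{\neq}$, show $\alpha+\psi(\alpha)<\beta+\psi(\beta)$, i.e.\ $\beta-\alpha>\psi(\alpha)-\psi(\beta)$.

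The natural move is to split on how $\psi(\alpha)$ and $\psi(\beta)$ compare. The easy case is $\psi(\alpha)\le\psi(\beta)$: here the right-hand side is $\le 0$ while $\beta-\alpha>0$, so we are done. The interesting case is $\psi(\alpha)>\psi(\beta)$. Here I would invoke Fact~\ref{valuationfact} on the sum $\beta+(-\alpha)$: since $\psi(-\alpha)=\psi(\alpha)>\psi(\beta)$ by (AC2), the fact gives
\[
\psi(\beta-\alpha)\;=\;\psi(\beta).
\]
Now $\beta-\alpha\in\Gamma^{>}$, so (AC3) applied to the positive element $\beta-\alpha$ and to $\alpha\in\Gamma^{\neq}$ yields
\[
(\beta-\alpha)+\psi(\beta-\alpha)\;>\;\psi(\alpha),
\]
which, after substituting $\psi(\beta-\alpha)=\psi(\beta)$ and rearranging, is exactly $\beta+\psi(\beta)>\alpha+\psi(\alpha)$, as required.

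There is essentially no obstacle: the proof is a two-line case split, and the only thing to be careful about is to invoke Fact~\ref{valuationfact} in the correct direction (its hypothesis is a \emph{strict} inequality on $\psi$-values, which is exactly what the non-trivial case supplies via (AC2)) and to apply (AC3) to the positive difference $\beta-\alpha$ rather than to $\alpha$ or $\beta$ themselves. Note that the statement parses the target of $\gamma\mapsto\gamma'$ as $\Gamma^{\neq}$; this nonvanishing of $\gamma'$ is not needed for the strict-monotonicity argument above, and in any case follows a posteriori: if $\gamma'=0$ for some $\gamma\neq 0$, then for $\gamma>0$ this contradicts (AC3) applied with $\beta=\gamma$ (which gives $\gamma'>\psi(\gamma)$) combined with $\psi(\gamma)=-\gamma<0$, and for $\gamma<0$ we reduce to the positive case via (AC2).
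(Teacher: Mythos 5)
Your main argument is correct, and it is the standard one: the paper itself gives no proof, deferring to \cite[Lemma 6.5.4(iii)]{mt}, so your two-case computation (trivial when $\psi(\alpha)\le\psi(\beta)$; otherwise $\psi(\beta-\alpha)=\psi(\beta)$ by Fact~\ref{valuationfact} and (AC2), then (AC3) applied to the positive element $\beta-\alpha$ against $\alpha$) is exactly the intended elementary derivation, and your deductions of (1) and (2) from strict monotonicity are fine.

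The final remark, however, is wrong on two counts. First, the purported contradiction is not one: if $\gamma>0$ and $\gamma'=0$, then (AC3) with $\beta=\gamma$ gives $0=\gamma'>\psi(\gamma)$, and $\gamma'=0$ gives $\psi(\gamma)=-\gamma<0$; these two statements are perfectly consistent (both just say $\psi(\gamma)<0$), so nothing has been refuted. Second, the claim itself is false: the codomain ``$\Gamma^{\neq}$'' in the lemma as printed is a slip (note that Definition~\ref{functionsdefs} later treats $\gamma\mapsto\gamma'$ as a map $\Gamma^{\neq}\to\Gamma$ whose inverse $\int$ is defined on all of $\Gamma$, so $0$ must lie in the image when there is asymptotic integration). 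Concretely, in $\Gamma_{\log}$ one has $\psi(-e_0)=e_0$ and hence $(-e_0)'=0$. So you should simply delete the a posteriori nonvanishing argument; it is not needed for any of the three assertions that carry content, and as written it claims to prove something false.
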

\begin{proof}
This follows from~\cite[Lemma 6.5.4(iii)]{mt}.
\end{proof}

\medskip\noindent
We say that an asymptotic couple $(\Gamma,\psi)$ has \textbf{asymptotic integration} if
\[
\Gamma = (\Gamma^{\neq})'.
\]
The primary example of an asymptotic couple with asymptotic integration is $(\Gamma_{\log},\psi)$.

\subsection{Asymptotic Integration}
\emph{In this subsection $(\Gamma,\psi)$ will be an arbitrary divisible $H$-asymptotic couple with asymptotic integration.}
We will construe $(\Gamma,\psi)$ as an $L_{AC}$-structure. Asymptotic integration allows us to define the functions $\int, s,$ and $\chi$ on $\Gamma$:

\begin{definition}
\label{functionsdefs}
For $\alpha\in\Gamma$ we let $\int\alpha$ denote the unique element $\beta\in\Gamma^{\neq}$ such that $\beta'=\alpha$ and we call $\beta = \int\alpha$ the \textbf{integral} of $\alpha$. This gives us a function $\int:\Gamma\to\Gamma^{\neq}$ which is the inverse of $\gamma\mapsto\gamma':\Gamma^{\neq}\to\Gamma$. We define the \textbf{successor function} $s:\Gamma\to\Psi$ by $\alpha\mapsto \psi(\int\alpha)$. Finally, we define the \textbf{contraction map} $\chi:\Gamma^{<}\to\Gamma^{<}$ by $\alpha\mapsto \int\psi(\alpha)$.
\end{definition}

\begin{example}
\label{functionformulas}
For the asymptotic couple $(\Gamma_{\log},\psi)$ defined in Section~\ref{introduction}, we give explicit formulas for the integral and successor functions in~\cite[Examples 2.9 and 3.10]{gehret}. For the reader's convenience we restate them here and also give the formula for the contraction map:
\begin{enumerate}
\item (Integral) For $\alpha = (r_0,r_1,r_2,\ldots)\in\Gamma_{\log}$, take the unique $n$ such that $r_n\neq 1$ and $r_m=1$ for $m<n$. Then the formula for $\alpha\mapsto \int\alpha$ is given as follows:
\[
\alpha = (\underbrace{1,\ldots,1}_n,\underbrace{r_n}_{\neq 1},r_{n+1},r_{n+2}\ldots) \mapsto \textstyle{\int}\alpha = (\underbrace{0,\ldots,0}_{n},r_n-1,r_{n+1},r_{n+2},\ldots):\Gamma_{\log} \to \Gamma_{\log}^{\neq}
\]
\item (Successor) For $\alpha = (r_0,r_1,r_2,\ldots)\in\Gamma_{\log}$, take the unique $n$ such that $r_n\neq 1$ and $r_m = 1$ for $m<n$. Then the formula for $\alpha\mapsto s(\alpha)$ is given as follows:
\[
\alpha = (\underbrace{1,\ldots,1}_n,\underbrace{r_n}_{\neq 1},r_{n+1},r_{n+1}\ldots) \mapsto s(\alpha) = (\underbrace{1,\ldots,1}_{n+1},0,0,\ldots):\Gamma_{\log}\to\Psi_{\log}\subseteq\Gamma_{\log}
\]
\item (Contraction) For $\alpha = (r_0,r_1,r_2,\ldots)\in\Gamma_{\log}^{<}$, take the unique $n$ such that $r_n< 0$ and $r_k = 0$ for $k<n$. Then the formula for $\alpha\mapsto \chi(\alpha)$ is given as follows:
\[
\alpha = (\underbrace{0,\ldots,0}_{n},\underbrace{r_n}_{< 0},r_{n+1},\ldots)\mapsto \chi(\alpha) = (\underbrace{0,\ldots,0}_{n+1},-1,0,0,\ldots):\Gamma^{<}_{\log}\to\Gamma^{<}_{\log}
\]
\end{enumerate}
\end{example}

\medskip\noindent
To get a feel for how the functions $\int,s$, and $\chi$ behave in general, we record here some of their elementary properties.

\begin{lemma}\label{functionproperties} For all $\alpha,\beta\in\Gamma$:
\begin{enumerate}
\item $\int\alpha = \alpha-s\alpha$;
\item $\alpha\in (\Gamma^{<})' \Longrightarrow \alpha<s\alpha$;
\item $\alpha\in (\Gamma^{>})'\Longrightarrow \alpha>s\alpha$;
\item $\alpha<\beta<(\Gamma^{>})'\Longrightarrow s\alpha\leq s\beta$;
\item $(\Gamma^{<})'<\alpha<\beta\Longrightarrow s\alpha\geq s\beta$;
\item $\beta = \psi(\alpha-\beta)$ iff $\beta = s(\alpha)$;
\item $\alpha<\beta<0\Longrightarrow \chi(\alpha)\leq\chi(\beta)$;
\item $\alpha<0\Longrightarrow [\alpha]>[\chi(\alpha)]$;
\item $\alpha<0\Longrightarrow \chi(\alpha)+\psi(\chi(\alpha)) = \psi(\alpha)$.
\end{enumerate}
\end{lemma}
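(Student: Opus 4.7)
The plan is to treat the nine items in order, since most are direct unfoldings of the definitions of $\int$, $s$, and $\chi$ together with the strict monotonicity of $\gamma\mapsto\gamma'$ from the previous lemma and the axioms (AC1)--(AC3), (HC). Only (8) requires a genuine valuation-theoretic input, and this is the main obstacle.

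For (1), unfold $(\int\alpha)' = \alpha$ to get $\int\alpha + \psi(\int\alpha) = \alpha$, i.e., $\int\alpha = \alpha - s\alpha$. Items (2) and (3) follow because $s\alpha - \alpha = -\int\alpha$, while the previous lemma gives $\int\alpha<0$ iff $\alpha \in (\Gamma^<)'$ and $\int\alpha>0$ iff $\alpha \in (\Gamma^>)'$. For (4), asymptotic integration yields $\Gamma = (\Gamma^<)' \cup (\Gamma^>)'$, disjointly, with $(\Gamma^<)' < (\Gamma^>)'$ by the previous lemma; hence $\alpha < \beta < (\Gamma^>)'$ forces $\alpha,\beta \in (\Gamma^<)'$ and so $\int\alpha < \int\beta < 0$ (the inverse $\int$ of $\gamma\mapsto\gamma'$ is also strictly increasing). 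Applying (HC) to $0 < -\int\beta < -\int\alpha$ together with (AC2) gives $s\alpha = \psi(\int\alpha) \leq \psi(\int\beta) = s\beta$. Item (5) is the mirror argument, forcing $0<\int\alpha<\int\beta$ and then applying (HC) directly. For (6), the forward direction uses that $\beta = \psi(\alpha-\beta) \in \Gamma$ forces $\alpha-\beta \in \Gamma^{\neq}$, whence $(\alpha-\beta)' = (\alpha-\beta) + \beta = \alpha$ gives $\alpha - \beta = \int\alpha$ and so $\beta = s\alpha$ by (1); the converse reverses this chain. Item (7) is (HC) applied to $0 < -\beta < -\alpha$, yielding $\psi(\alpha) \leq \psi(\beta)$, and then strict monotonicity of $\int$. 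Item (9) is immediate: $\chi(\alpha) = \int\psi(\alpha)$ reads $(\chi(\alpha))' = \psi(\alpha)$.

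The crux is (8). From (9) write $\chi(\alpha) = \psi(\alpha) - \psi(\chi(\alpha))$; since $\chi(\alpha) < 0$, this forces $\psi(\chi(\alpha)) > \psi(\alpha)$. The key input, where the real work of the lemma sits, is the general fact that in any $H$-asymptotic couple, $\psi(a) > \psi(b)$ with $a,b \in \Gamma^{\neq}$ implies $[a] < [b]$: indeed, for each $n \geq 1$, (AC2) gives $\psi(n|a|) = \psi(a) > \psi(b)$, and the contrapositive of (HC) rules out $0 < |b| \leq n|a|$, so $n|a| < |b|$. Applied with $a = \chi(\alpha)$ and $b = \alpha$, this yields $[\chi(\alpha)] < [\alpha]$, which is (8).
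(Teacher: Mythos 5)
Your proof is correct. The paper itself gives essentially no argument for this lemma: it cites external references for items (2), (6), and (8) (to earlier work and to \cite{mt}) and states that the remaining items ``follow easily from the definitions and previously stated properties,'' which is exactly the kind of unfolding you carry out. Your treatment of (1)--(7) and (9) matches what the paper intends, and your handling of (8) supplies the one genuinely nontrivial ingredient in a self-contained way: the observation that in an $H$-asymptotic couple $\psi(a)>\psi(b)$ forces $[a]<[b]$ (via (AC2) and the contrapositive of (HC)), combined with (9) and $\chi(\alpha)<0$ to get $\psi(\chi(\alpha))>\psi(\alpha)$. This is presumably the content of the cited result \cite[Lemma 9.2.18(iii)]{mt}, so your proof buys a reader independence from that reference at the cost of a few extra lines. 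The only point worth making explicit is the justification that $\chi(\alpha)<0$, i.e.\ that $\Psi\subseteq(\Gamma^{<})'$: this follows from (AC3) (which gives $\Psi<(\Gamma^{>})'$) together with asymptotic integration ($\Gamma=(\Gamma^{<})'\sqcup(\Gamma^{>})'$), and is implicit in the paper's definition of $\chi$ as a map $\Gamma^{<}\to\Gamma^{<}$.
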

\begin{proof}
(2) is~\cite[Lemma 3.3]{gehret}, (6) is~\cite[Lemma 3.7]{gehret}, and (8) is~\cite[Lemma 9.2.18(iii)]{mt}. The rest follow easily from the definitions and previously stated properties of H-asymptotic couples.
\end{proof}

\medskip\noindent
The primary $L_{AC}$-theory of interest is $T_0$:
\begin{definition}
\label{T0def}
Let $T_0$ be the $L_{AC}$-theory whose models are the divisible $H$-asymptotic couples with asymptotic integration such that
\begin{itemize}
\item $\Psi$ as an ordered subset of $\Gamma$ has a least element $s0$,
\item $s0>0$,
\item $\Psi$ as an ordered subset of $\Gamma$ is a successor set,
\item for each $\alpha\in\Psi$, the immediate successor of $\alpha$ in $\Psi$ is $s\alpha$, and
\item $\gamma\mapsto s\gamma:\Psi\to\Psi^{>s0}$ is a bijection.
\end{itemize}
\end{definition}

\noindent
In~\cite{gehret} we showed that the $L_{AC}$-theory $T_0$ is complete and model complete. In particular, $T_0 = \Th_{L_{AC}}(\Gamma_{\log},\psi)$. We also showed that for models $(\Gamma,\psi)$ of $T_0$, the set $\Psi$ is stably embedded in $(\Gamma,\psi)$.

\medskip\noindent
In Figure~\ref{TlogElementaryExt} we illustrate a ``typical'' model of $T_0$. Here the set $\Psi$ no longer has order type $(\N,<)$, but in fact has the order type of $(\N,<)$ followed by copies of $(\Z,<)$. Here the copies of $(\Z,<)$ are indexed by the linear order $(\N,<)$, but in general the copies of $(\Z,<)$ may be indexed by any linear order. This is clear because the ordered set $(\Psi,<)$ is elementarily equivalent to the ordered set $(\N,<)$. The dashed line located at ``$\sup\Psi$'' serves to indicate the boundary between $(\Gamma^{<})'$ and $(\Gamma^{>})'$. In particular, $(\Gamma^{>})' = \Gamma^{>\Psi}$ and $(\Gamma^{<})' = \Psi^{\downarrow}$ (the downward closure of the set $\Psi$ in $\Gamma$). The function $s:\Gamma\to\Psi$ is defined on all of $\Gamma$, but we illustrate here that its restriction to $\Psi$ really does make it an actual successor function $\gamma\mapsto s\gamma:\Psi\to\Psi^{>s0}$. Finally, for the sake of completeness, we have included the function $p$ in this illustration. The function $p$ is defined to be the inverse to $\gamma\mapsto s\gamma:\Psi\to\Psi^{>s0}$, and we extend it to a function on the rest of $\Gamma_{\infty}$ by having it take the value $\infty$ everywhere else. We will formally add $s$ and $p$ to our language in Section~\ref{sectionsimpleextensions}, but we include them here because they are definable in models of $T_0$.

\begin{figure}[!htbp]
\centering
\caption{A typical model of $T_0$}
\label{TlogElementaryExt}
\resizebox{17cm}{!}{
\includegraphics{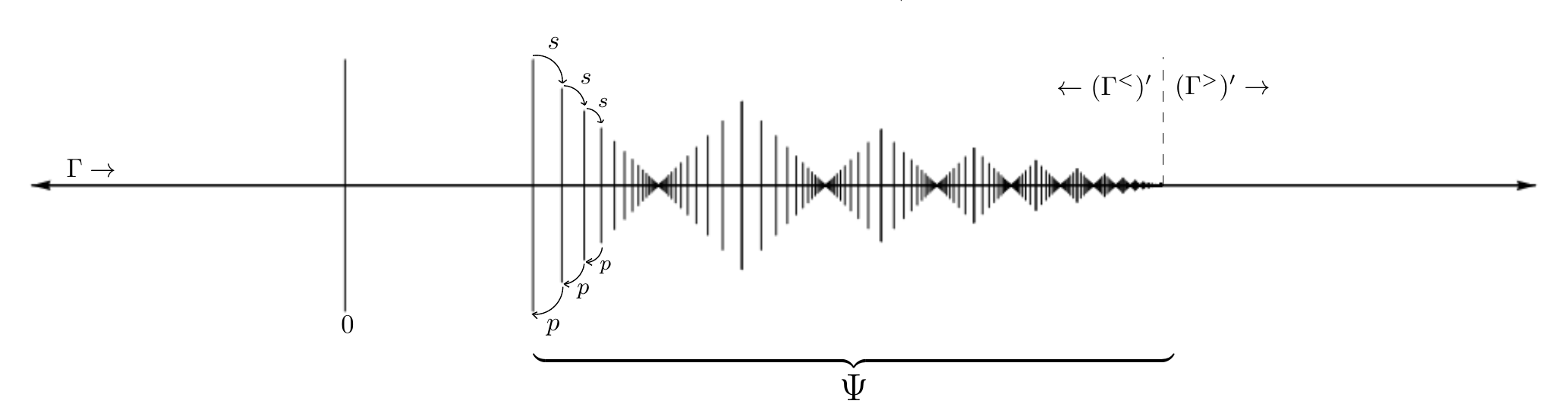}
}
\end{figure}

\medskip\noindent
For the rest of this section we continue with our standing assumption that $(\Gamma,\psi)$ is an arbitrary divisible $H$-asymptotic couple with asymptotic integration. However, it may be useful for the reader to keep in mind the specific case when $(\Gamma,\psi)\models T_0$.

\begin{definition}
We say $B\subseteq\Psi$ is an \textbf{$s$-cut of $\Psi$} if $B$ is an \emph{upward} closed subset of $\Psi$ such that $s(\Psi\setminus B)\subseteq (\Psi\setminus B)$. Let $\sded(\Psi)$ be the collection of all $s$-cuts of $\Psi$. We define a linear ordering $\leq$ on $\sded(\Psi)$ by $B_0\leq B_1$ iff $B_0\supseteq B_1$.
\end{definition}


\begin{remark}
We defined $s$-cuts here as ``right cuts'' only for notational convenience in Lemma~\ref{rhoZ}. Given an $s$-cut $B$ of $\Psi$, we identify it with the cut $(\Psi^{<B},B)$ in $\Psi$.
\end{remark}

\begin{definition}
For $\alpha,\beta\in\Psi$, we define $\alpha \ll \beta$ to mean $s^n\alpha<\beta$ for all $n$, and define $\alpha \gg\beta$ to mean $\beta\ll \alpha$. It follows that if $\alpha\ll\beta$, then there is a $B\in\sded(\Psi)$ such that $\alpha< B\ni\beta$. Finally, we define the equivalence relation $\sim_s$ on $\Psi$:
\[
\alpha\sim_s\beta :\Longleftrightarrow \text{$\alpha\not\ll\beta$ and $\beta\not\ll\alpha$}
\]
and we call the equivalence class $\alpha/\sim_s$ of $\alpha$ the \textbf{$s$-class} of $\alpha$. If $(\Gamma,\psi)\models T_0$, then the $s$-class of $\alpha$ is thought of as the copy of $\Z$ or initial copy of $\N$ that $\alpha$ lives on.
\end{definition}

\noindent
For divisible $H$-asymptotic couples with asymptotic integration, it is useful to have the following stratification in mind:
\[
\xymatrix{
\Gamma^{\neq}\ar@{->>}^-{\gamma\mapsto [\gamma]}[d] \\
[\Gamma^{\neq}]\ar@{->>}^-{[\gamma]\mapsto \psi(\gamma)}[d] &\text{archimedean classes}\\
\Psi\ar@{->>}^-{\psi(\gamma)\mapsto \psi(\gamma)/\sim_s}[d] &\text{$\Psi$-set}\\
\Psi/\sim_s &\text{$s$-classes on the $\Psi$-set}\\
}
\]

\medskip\noindent
\begin{definition}
Let $(\Gamma,\psi)$ and $(\Gamma_1,\psi_1)$ be asymptotic couples. An \textbf{embedding}
\[
h:(\Gamma,\psi)\to(\Gamma_1,\psi_1)
\]
is an embedding $h:\Gamma\to\Gamma_1$ of ordered abelian groups such that
\[
h(\psi(\gamma)) = \psi_1(h(\gamma))\text{ for $\gamma\in\Gamma^{\neq}$.}
\]
If $\Gamma\subseteq\Gamma_1$ and the inclusion $\Gamma\to\Gamma_1$ is an embedding $(\Gamma,\psi)\to(\Gamma_1,\psi_1)$, then we call $(\Gamma_1,\psi_1)$ an \textbf{extension} of $(\Gamma,\psi)$, and we also indicate this by $(\Gamma,\psi)\subseteq(\Gamma_1,\psi_1)$.
\end{definition}

\medskip\noindent
The proof of quantifier elimination for the theory of $(\Gamma_{\log},\psi)$ in~\cite{gehret} is built upon an arsenal of embedding lemmas for divisible H-asymptotic couples (not necessarily with asymptotic integration).
For the purposes of the current section, we only need to recall the following embedding lemma~\cite[Lemmas 4.11 and 4.12]{gehret} which adds a single copy of $\Z$ to $\Psi$ (and adds a single point to $\Psi/\sim_s$):

\begin{lemma}
\label{Zinmiddle}Let $B\in\sded(\Psi)$ be such that $B\neq\Psi$. Then there is a divisible $H$-asymptotic couple $(\Gamma_B,\psi_B)\supseteq (\Gamma,\psi)$ with a family $(\beta_k)_{k\in\Z}$ in $\Psi_B$ satisfying the following conditions:
\begin{enumerate}
\item $(\Gamma_B,\psi_B)$ has asymptotic integration;
\item $\Gamma^{<B}<\beta_k<B$, and $s_B(\beta_k) = \beta_{k+1}$ for all $k$;
\item $\Psi_B = \Psi\cup\{\beta_k:k\in\Z\}$;
\item for any embedding $i:(\Gamma,\psi)\to (\Gamma^*,\psi^*)$ into a divisible $H$-asymptotic couple with asymptotic integration and any family $(\beta_k^*)_{k\in\Z}$ in $\Psi^*$ such that $i(\Gamma^{<B})<\beta^*_k<i(B)$ and $s^*(\beta^*_k) = \beta^*_{k+1}$ for all $k$, there is a unique extension of $i$ to an embedding $(\Gamma_B,\psi_B)\to(\Gamma^*,\psi^*)$ sending $\beta_k$ to $\beta^*_k$ for all $k$;
\item if $(\Gamma,\psi)$ is a model of $T_0$, then so is $(\Gamma_B,\psi_B)$.
\end{enumerate}
\end{lemma}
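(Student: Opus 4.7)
The plan is to construct $(\Gamma_B, \psi_B)$ as a countable direct limit of extensions, each produced by a single application of the one-point embedding results of~\cite[Lemmas 4.11, 4.12]{gehret}. Enumerate $\Z$ in zig-zag order as $k_0, k_1, k_2, \ldots$ (say $0, 1, -1, 2, -2, \ldots$), set $(\Gamma^{(0)}, \psi^{(0)}) := (\Gamma, \psi)$, and at stage $n$ adjoin a single new $\Psi$-element $\beta_{k_n}$ to $(\Gamma^{(n)}, \psi^{(n)})$, placed in the correct position within the cut $(\Gamma^{<B}, B)$ and, when the relevant indices are already present, with its $s$-image and $s$-preimage set to the neighboring $\beta_k$'s. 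Then define $(\Gamma_B, \psi_B) := \bigcup_n (\Gamma^{(n)}, \psi^{(n)})$; the family $(\beta_k)_{k \in \Z}$ in $\Psi_B$ is recovered by construction.

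The admissibility of each one-point extension rests critically on $B$ being an $s$-cut of $\Psi$: because $p$ sends $B$ into $B$, no element of $B$ needs to acquire a new predecessor from the chain being inserted on its left. Hence after finitely many insertions, $B$ is still upward closed and closed under $p$ in the enlarged $\Psi^{(n)}$, and analogously the appropriate gap on the left of the partially built chain also determines an admissible cut for the next application. Properties (1), (2), (3) and (5) then follow: asymptotic integration and the $T_0$-axioms are preserved by each one-point step by the stated clauses of~\cite[Lemmas 4.11, 4.12]{gehret}, they pass through the ascending union (an increasing union of divisible $H$-asymptotic couples with asymptotic integration is such), and by construction exactly the $\beta_k$'s are added to $\Psi$.

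The main obstacle is assembling the universal property~(4). Given $i : (\Gamma, \psi) \to (\Gamma^*, \psi^*)$ and a compatible target family $(\beta_k^*)_{k \in \Z}$ in $\Psi^*$, I would proceed by induction on $n$: apply the universal-property clause of the one-point lemma at stage $n$ to extend $i^{(n)} : (\Gamma^{(n)}, \psi^{(n)}) \to (\Gamma^*, \psi^*)$ uniquely to an embedding $i^{(n+1)}$ sending $\beta_{k_n}$ to $\beta^*_{k_n}$, then take $i_B := \bigcup_n i^{(n)}$. The verification at each stage reduces to checking that $\beta^*_{k_n}$ sits in the correct cut of $i^{(n)}(\Gamma^{(n)})$ inside $\Gamma^*$ and stands in the correct $s^*$-relation to the previously placed $\beta^*_{k_j}$; both of these are guaranteed by the hypotheses $i(\Gamma^{<B}) < \beta^*_k < i(B)$ and $s^*(\beta^*_k) = \beta^*_{k+1}$. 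Uniqueness at each finite stage propagates to uniqueness of the direct-limit extension, completing the argument.
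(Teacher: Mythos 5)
Your proposal has two related problems, one of citation and one of substance.

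\textbf{The citation issue.} You propose to build $(\Gamma_B,\psi_B)$ by iterating ``one-point embedding results of~\cite[Lemmas~4.11, 4.12]{gehret}.'' But as the paper itself says just before the statement (and reiterates in its sketch of proof), Lemmas~4.11 and~4.12 of the prequel are \emph{exactly} this Lemma~\ref{Zinmiddle}: they adjoin an entire copy of $\Z$ to $\Psi$ in a single step, with underlying group $\Gamma_B = \Gamma\oplus\bigoplus_{k\in\Z}\Q\beta_k$. They are not one-point lemmas. So as written your argument is circular: it proves the lemma by repeated appeal to the very lemma being proved, under the mistaken belief that the cited results insert a single $\Psi$-element at a time.

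\textbf{The substantive issue.} Even setting citations aside, a genuine one-point-at-a-time construction cannot preserve asymptotic integration at each finite stage, as you assert. If $(\Gamma^{(1)},\psi^{(1)})\supseteq(\Gamma,\psi)$ has asymptotic integration and $\Psi^{(1)} = \Psi\cup\{\beta_0\}$ with $\beta_0$ in the cut $(\Gamma^{<B},B)$, then $s^{(1)}(\beta_0)=\psi^{(1)}(\int\beta_0)$ is forced to be some element of $\Psi^{(1)}$; and since $B\neq\emptyset$ puts $\beta_0\in(\Gamma^{<})'$, Lemma~\ref{functionproperties}(2) forces $s^{(1)}(\beta_0)>\beta_0$, hence $s^{(1)}(\beta_0)\in B$. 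But embeddings between divisible $H$-asymptotic couples with asymptotic integration preserve $s$ (since $\int$ is characterized by the order and $\psi$, and uniqueness of $\int\alpha$ passes up), so once $s(\beta_0)$ is pinned to an element of $B$ it can never become the yet-unadjoined $\beta_1$ in any further extension. Thus the intermediate structures $(\Gamma^{(n)},\psi^{(n)})$ in your chain cannot all have asymptotic integration, contrary to what you claim, and the inductive invariant you need for the universal property collapses. The paper's construction avoids this entirely by adjoining the whole $\Z$-chain $\Gamma\oplus\bigoplus_{k\in\Z}\Q\beta_k$ in a single step, so that $s(\beta_k)=\beta_{k+1}$ can be arranged coherently for all $k$ simultaneously; if you want a stepwise approach, the intermediate stages would have to drop asymptotic integration and use different embedding lemmas, which is a genuinely different (and more delicate) argument that your write-up does not supply.
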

\begin{proof}[Sketch of Proof]
The underlying abelian group of the extension $(\Gamma_B,\psi_B)$ will be $\Gamma_B:= \Gamma\oplus\bigoplus_{k\in\Z}\Q\beta_k$. The ordering and $\psi$-map are then defined on $\Gamma_B$ in such a way as to guarantee that (4) holds. If $B=\emptyset$, then this is~\cite[Lemma 4.11]{gehret}, and if $B\neq \emptyset$, then this is~\cite[Lemma 4.12]{gehret}.
\end{proof}

\medskip\noindent
In Figure~\ref{ZinSCutIPE}, we illustrate an instance of the construction that is done in Lemma~\ref{Zinmiddle} (over a model of $T_0$). Technically speaking, here $B$ (as a set) is the two rightmost copies of $\Z$, however, we think of $B$ as indicating the cut between existing copies of $\Z$ where a new copy of $\Z$ (namely, $(\beta_k)_{k\in\Z}$) is to be added. 


\begin{figure}[!htbp]
\centering
\caption{Example of Lemma~\ref{Zinmiddle} in action}
\label{ZinSCutIPE}
\resizebox{17cm}{!}{
\includegraphics{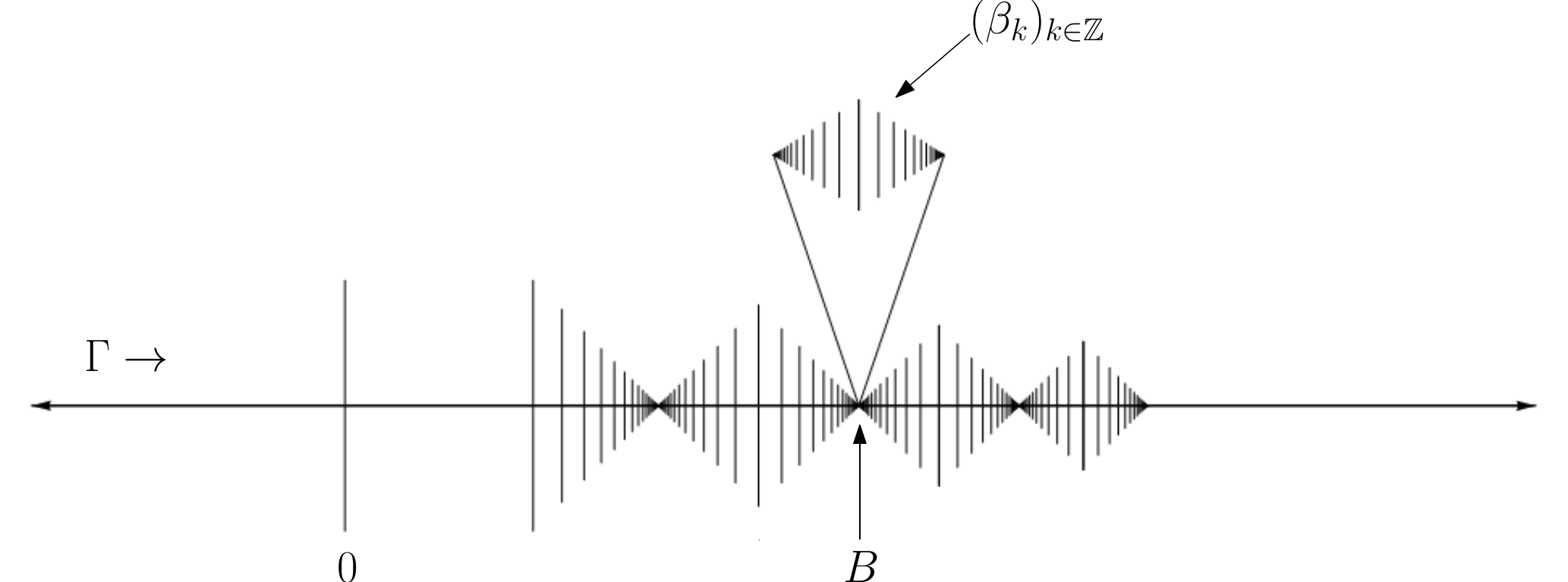}
}
\end{figure}

\medskip\noindent
As for the universal property, suppose $i:(\Gamma,\psi)\to(\Gamma^*,\psi^*)$ is an embedding as in (4) from Lemma~\ref{Zinmiddle} above. The uniqueness of the extension of $i$ to an embedding $(\Gamma_B,\psi_B)\to (\Gamma^*,\psi^*)$ depends heavily on the specification of the family $(\beta_k^*)_{k\in\Z}$ in $\Psi^*$ and in particular the requirement that $\beta_k\mapsto \beta_k^*$ for all $k$:
\[
\xymatrix{
&&\text{$(\Gamma^*,\psi^*)$ \textbf{and} $(\beta_k^*)_{k\in\Z}$} \\
\text{$(\Gamma_B,\psi_B)$ \textbf{and} $(\beta_k)_{k\in\Z}$} \ar^{\exists!}@{.>}[urr]&& \\
&&\\
(\Gamma,\psi)\ar_-{i}[uuurr] \ar@{-}[uu]&&\\
}
\]
In fact, if we were to drop the requirement that the extension of $i$ to an embedding $(\Gamma_B,\psi_B)\to (\Gamma^*,\psi^*)$ has the property that $\beta_k\mapsto \beta_k^*$ for all $k\in\Z$, then there would always be infinitely many distinct extensions of $i$ to embeddings $(\Gamma_B,\psi_B)\to (\Gamma^*,\psi^*)$:
\[
\xymatrix{
&&\text{$(\Gamma^*,\psi^*)$} \\
\text{$(\Gamma_B,\psi_B)$} \ar^{\exists^{\infty}}@{.>}[urr]&& \\
&&\\
(\Gamma,\psi)\ar_-{i}[uuurr] \ar@{-}[uu]&&\\
}
\]
This follows from Lemma~\ref{Zinmiddle} by considering the reindexing $(\beta_{k+l}^*)_{k\in\Z}$ of the family $(\beta_{k}^*)_{k\in\Z}$ by an arbitrary $l\in\Z$.

\medskip\noindent
 In the lemma below we add transfinitely many copies of $\Z$ to $\Psi$. We think of the extension $(\Gamma_{\rho},\psi_{\rho})$ of $(\Gamma,\psi)$ constructed in that lemma as adding $\nu$-many copies of $\Z$ to $\Psi$ in the $s$-cuts specified by $\rho$.

\begin{lemma}
\label{rhoZ}
Let $\rho:\nu\to\sded(\Psi)\setminus\{\Psi\}$ be an increasing function. Then there is a divisible $H$-asymptotic couple $(\Gamma_{\rho},\psi_{\rho})\supseteq (\Gamma,\psi)$ with a family $(\beta_{k,\eta})_{k\in\Z,\eta<\nu}$ in $\Psi_{\rho}$ satisfying the following conditions:
\begin{enumerate}
\item $(\Gamma_{\rho},\psi_{\rho})$ has asymptotic integration;
\item $\Gamma^{<\rho(\eta)}<\beta_{k,\eta}<\rho(\eta)$, and $s_{\rho}(\beta_{k,\eta}) = \beta_{k+1,\eta}$ for all $k\in\Z$ and $\eta<\nu$;
\item $\beta_{k,\eta_0}<\beta_{l,\eta_1}$ for all $k,l\in\Z$ and $\eta_0<\eta_1<\nu$;
\item $\Psi_{\rho} = \Psi\cup\{\beta_{k,\eta}:k\in\Z,\eta<\nu\}$;
\item for any embedding $i:(\Gamma,\psi)\to(\Gamma^*,\psi^*)$ into a divisible $H$-asymptotic couple with asymptotic integration and any family $(\beta_{k,\eta}^*)_{k\in\Z,\eta<\nu}$ in $\Psi^*$ such that $i(\Gamma^{<\rho(\eta)})<\beta_{k,\eta}^*<i(\rho(\eta))$ and $s^*(\beta^*_{k,\eta}) = \beta^*_{k+1,\eta}$ for all $k\in\Z$ and $\eta<\nu$, and $\beta^*_{k,\eta_0}<\beta^*_{l,\eta_1}$ for all $k,l\in\Z$ and $\eta_0<\eta_1<\nu$,
 then there is a unique extension of $i$ to an embedding $(\Gamma_{\rho},\psi_{\rho})\to(\Gamma^*,\psi^*)$ sending $\beta_{k,\eta}$ to $\beta_{k,\eta}^*$ for all $k\in\Z$ and $\eta<\nu$;
\item if $(\Gamma,\psi)$ is a model of $T_0$, then so is $(\Gamma_{\rho},\psi_{\rho})$.
\end{enumerate}
\end{lemma}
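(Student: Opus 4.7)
The plan is to construct $(\Gamma_\rho, \psi_\rho)$ by transfinite recursion on $\nu$, iterating the one-copy-of-$\Z$ construction of Lemma~\ref{Zinmiddle} at successor steps and taking chain unions at limit steps. Concretely, I would build a chain $((\Gamma_\eta, \psi_\eta))_{\eta \leq \nu}$ of divisible $H$-asymptotic couples with asymptotic integration extending $(\Gamma, \psi)$, together with families $(\beta_{k,\eta'})_{k \in \Z, \eta' < \eta}$ in $\Psi_\eta$, such that conditions (2), (3), (4) of the lemma hold at each intermediate stage. Set $(\Gamma_0, \psi_0) := (\Gamma, \psi)$ and at limit $\lambda$ put $(\Gamma_\lambda, \psi_\lambda) := \bigcup_{\eta < \lambda} (\Gamma_\eta, \psi_\eta)$. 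At a successor stage, apply Lemma~\ref{Zinmiddle} to $(\Gamma_\eta, \psi_\eta)$ with the $s$-cut $B_\eta := \rho(\eta)$, viewed as a subset of $\Psi_\eta \supseteq \Psi$.

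The main verification at each successor stage is that $\rho(\eta)$ really does define an $s$-cut of $\Psi_\eta$, not merely of $\Psi$. Upward-closedness in $\Psi_\eta$ amounts to checking that no previously added $\beta_{k, \eta'}$ (with $\eta' < \eta$) can lie strictly below some element of $\rho(\eta)$; this uses crucially that $\rho$ is increasing in the order on $\sded(\Psi)$, which by convention means $\rho(\eta') \supseteq \rho(\eta)$, so $\beta_{k, \eta'} < \rho(\eta') \supseteq \rho(\eta)$ by the inductive hypothesis (2). Closure of the complement under $s_\eta$ then splits into two cases: on $\Psi \setminus \rho(\eta)$ it is exactly the hypothesis that $\rho(\eta) \in \sded(\Psi)$, while on each previously added copy it is immediate from $s_\eta(\beta_{k,\eta'}) = \beta_{k+1,\eta'}$. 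Granted this, Lemma~\ref{Zinmiddle} yields the next link $(\Gamma_{\eta+1}, \psi_{\eta+1})$ together with a new family $(\beta_{k,\eta})_{k \in \Z}$ satisfying $\Gamma_\eta^{<\rho(\eta)} < \beta_{k,\eta} < \rho(\eta)$, and since every earlier $\beta_{l,\eta'}$ lies in $\Gamma_\eta^{<\rho(\eta)}$, condition (3) is inherited.

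Properties (1), (4) follow from the construction plus the routine observation that divisibility, $H$-type, and asymptotic integration are all preserved under unions of chains (the integrals witnessing asymptotic integration already exist at some finite stage and are preserved by inclusion). For the universal property (5), given $i$ and a family $(\beta^*_{k,\eta})$ satisfying the hypotheses, extend $i$ by transfinite recursion to embeddings $i_\eta : (\Gamma_\eta, \psi_\eta) \to (\Gamma^*, \psi^*)$: at successor steps use Lemma~\ref{Zinmiddle}(4) with target family $(\beta^*_{k,\eta})_{k \in \Z}$, and at limit steps take the obvious union of embeddings. Uniqueness propagates from Lemma~\ref{Zinmiddle}(4) by induction. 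Finally, (6) is handled by Lemma~\ref{Zinmiddle}(5) at successor steps; at limit stages, the relevant $T_0$-axioms (least element $s0$, successor-set condition with immediate successor $s\alpha$, bijectivity of $s : \Psi \to \Psi^{>s0}$) are preserved because the $s$-cut property of each $B_\eta$ guarantees that no newly added element ever lands inside an existing $s$-class.

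The main obstacle I anticipate is the bookkeeping to verify that $\rho(\eta)$, as a subset of the growing $\Psi_\eta$, retains the $s$-cut property at every successor step; this is where the monotonicity hypothesis on $\rho$ is fully used. Once that is in hand, the remainder of the argument is a routine transfinite iteration of the single-copy Lemma~\ref{Zinmiddle}.
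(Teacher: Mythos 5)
Your proposal is correct and follows essentially the same route as the paper: transfinite recursion on $\nu$, applying Lemma~\ref{Zinmiddle} at successor steps (with the key verification that $\rho(\eta)$ remains an $s$-cut of the growing $\Psi$-set, using monotonicity of $\rho$ and the inductive hypothesis), taking chain unions at limit steps, and propagating the universal property by composing the one-step universal properties. The paper isolates exactly the same key verification as a Claim in the successor case, so no further comment is needed.
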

\begin{proof}
We will prove this by transfinite induction on $\nu$.

($\nu = 0$) In this case we set $(\Gamma_{\rho},\psi_{\rho}):=(\Gamma,\psi)$ and we are done.

($\nu=\eta+1$) By the inductive hypothesis, we can construct an extension $(\Gamma_{\rho\upharpoonright\eta},\psi_{\rho\upharpoonright\eta})$ of $(\Gamma,\psi)$ which satisfies properties (1)-(5) for the function $\rho\upharpoonright\eta:\eta\to\sded(\Psi)$. 

\begin{claim}
$\rho(\eta)$ is an $s$-cut in $\Psi_{\rho\upharpoonright\eta}$.
\end{claim}
\begin{proof}[Proof of claim]
By the inductive hypothesis, $\Psi_{\rho\upharpoonright\eta}= \Psi\cup\{\beta_{k,\eta_0}:k\in\Z,\eta_0<\eta\}$, so it suffices to prove that $\beta_{k,\eta_0}<\rho(\eta)$ for all $k\in\Z$ and $\eta_0<\eta$. This is clear because $\beta_{k,\eta_0}<\rho(\eta_0)$ by (3) for $(\Gamma_{\rho\upharpoonright\eta},\psi_{\rho\upharpoonright\eta})$ and $\rho(\eta_0)\leq\rho(\eta)$ because $\rho$ is increasing.
\end{proof}

Since $\rho(\eta)$ is also an $s$-cut in $\Psi_{\rho\upharpoonright\eta}$, we can use Lemma~\ref{Zinmiddle} to add a copy of $\Z$ to $(\Gamma_{\rho\upharpoonright\eta},\psi_{\rho\upharpoonright\eta})$ at $\rho(\eta)$. Thus we set $(\Gamma_{\rho},\psi_{\rho}):=((\Gamma_{\rho\upharpoonright\eta})_{\rho(\eta)},(\psi_{\rho\upharpoonright\eta})_{\rho(\eta)})$. As an extension of $(\Gamma,\psi)$, it is clear that $(\Gamma_{\rho},\psi_{\rho})$ satisfies properties (1)-(4). Property (5) is satisfied because $(\Gamma_{\rho\upharpoonright\eta},\psi_{\rho\upharpoonright\eta})$ satisfies property (5) over $(\Gamma,\psi)$ and $(\Gamma_{\rho},\psi_{\rho})$ satisfies the universal property of Lemma~\ref{Zinmiddle} over $(\Gamma_{\rho\upharpoonright\eta},\psi_{\rho\upharpoonright\eta})$.

($\nu$ limit ordinal) By the inductive hypothesis, for all $\eta_0<\eta_1<\nu$ we can construct extensions $(\Gamma_{\rho\upharpoonright\eta_i},\psi_{\rho\upharpoonright\eta_i})$ of $(\Gamma,\psi)$ ($i=1,2$) such that there is a unique embedding $i_{\eta_0,\eta_1}:(\Gamma_{\rho\upharpoonright\eta_0},\psi_{\rho\upharpoonright\eta_0})\to (\Gamma_{\rho\upharpoonright\eta_1},\psi_{\rho\upharpoonright\eta_1})$ over $(\Gamma,\psi)$ such that $\beta_{k,\eta}\mapsto\beta_{k,\eta}$ for all $k\in\Z$ and $\eta<\eta_0$. 
\[
\xymatrix{
(\Gamma_{\rho\upharpoonright\eta_0},\psi_{\rho\upharpoonright\eta_0}) \ar^{i_{\eta_0,\eta_1}}[r]& (\Gamma_{\rho\upharpoonright\eta_1},\psi_{\rho\upharpoonright\eta_1}) \\
(\Gamma,\psi) \ar[u] \ar[ur]& \\
}
\]
Thus without loss of generality we may assume that for all $\eta_0<\eta_1<\nu$ we have an increasing chain:
\[
(\Gamma,\psi)\subseteq (\Gamma_{\rho\upharpoonright\eta_0},\psi_{\rho\upharpoonright\eta_0})\subseteq  (\Gamma_{\rho\upharpoonright\eta_1},\psi_{\rho\upharpoonright\eta_1}) 
\]
Therefore we may set $(\Gamma_{\rho},\psi_{\rho}):= (\bigcup_{\eta<\nu}\Gamma_{\rho\upharpoonright\eta},\bigcup_{\eta<\nu}\psi_{\rho\upharpoonright\eta})$ and it is clear that this extension satisfies properties (1)-(4). Suppose that $i:(\Gamma,\psi)\to(\Gamma^*,\psi^*)$ is an embedding such that $(\Gamma^*,\psi^*)$ is a divisible $H$-asymptotic couple with asymptotic integration and there is a family $(\beta^*_{k,\eta})_{k\in\Z,\eta<\nu}$ in $\Psi^*$ satisfying the properties listed in (5). Then for each $\eta<\nu$ there is a unique extension of $i$ to an embedding $i_{\eta}:(\Gamma,\psi)\subseteq (\Gamma_{\rho\upharpoonright\eta},\psi_{\rho\upharpoonright\eta})\to (\Gamma^*,\psi^*)$ sending $\beta_{k,\eta_0}$ to $\beta_{k,\eta_0}^*$ for all $k\in\Z$ and $\eta_0<\eta$. Thus, it is clear that $i_{\nu} := \cup_{\eta<\nu}i_{\eta}:(\Gamma_{\rho},\psi_{\rho})\to (\Gamma^*,\psi^*)$ is an extension of $i$ sending $\beta_{k,\eta}$ to $\beta_{k,\eta}^*$ for all $k\in\Z$ and $\eta<\nu$. Uniqueness of $i_{\nu}$ follows from the observation that the restriction of $i_{\nu}$ to each $(\Gamma_{\rho\upharpoonright\eta},\psi_{\rho\upharpoonright\eta})$ is uniquely determined by the universal property that each $(\Gamma_{\rho\upharpoonright\eta},\psi_{\rho\upharpoonright\eta})$ enjoys (by induction).

Finally, (6) is immediate from the above construction.
\end{proof}

\medskip\noindent
In Figure~\ref{TransfiniteZzz}, we illustrate an instance of the construction done in Lemma~\ref{rhoZ} (over a model of $T_0$). Here we have the increasing function $\rho:4\to\sded(\Psi)$ where $\rho(0)<\rho(1)=\rho(2)<\rho(3)$. Since $\rho(1)=\rho(2)$, $(\beta_{k,1})$, the copy of $\Z$ corresponding to $\rho(1)$,  gets added to the same cut in $\Psi$ as $(\beta_{k,2})$ the copy of $\Z$ corresponding to $\rho(2)$. However, the construction ensures that $(\beta_{k,1})$ gets added entirely to the left of $(\beta_{k,2})$.

\begin{figure}[!htbp]
\centering
\caption{Example of Lemma~\ref{rhoZ} in action}
\label{TransfiniteZzz}
\resizebox{17cm}{!}{
\includegraphics{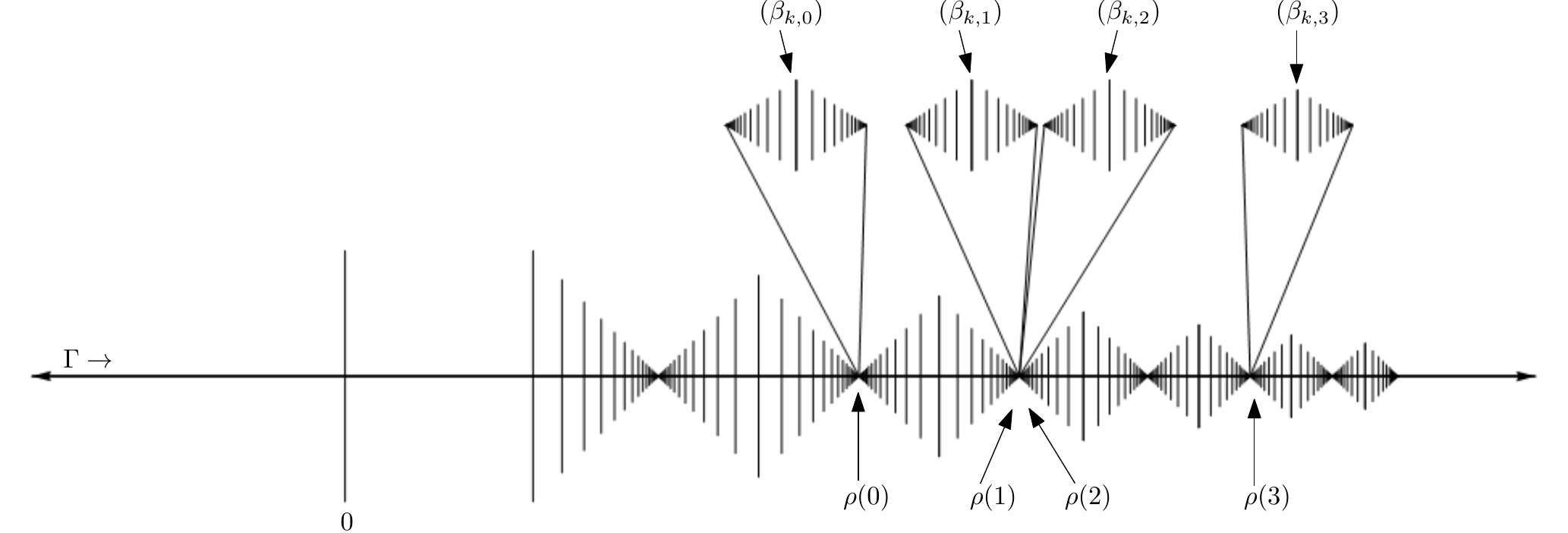}
}
\end{figure}

\medskip\noindent
For use in Section~\ref{sectionsimpleextensions}, we also recall here an important relationship between the functions $s$ and $\psi$:

\begin{prop}\cite[Corollary 3.5]{gehret}
\label{spsigap}
Let $(\Gamma^*,\psi^*)$ be an $H$-asymptotic couple with asymptotic integration that extends $(\Gamma,\psi)$. Suppose $\gamma^*\in\Psi^*$ is such that $\Psi<\gamma^*$. Then $s(\alpha) = \psi^*(\alpha-\gamma^*)$ for all $\alpha\in\Gamma$.
\end{prop}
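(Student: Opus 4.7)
The plan is to set $\beta := s(\alpha) \in \Psi$, decompose $\alpha - \gamma^* = (\alpha - \beta) + (\beta - \gamma^*)$, compute $\psi^*$ of each summand separately, and then invoke the ultrametric-type property Fact~\ref{valuationfact}.

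First I would apply Lemma~\ref{functionproperties}(6), which gives $\beta = \psi(\alpha - \beta)$ (and in particular forces $\alpha - \beta \ne 0$). Since $\psi^*$ extends $\psi$, this immediately yields $\psi^*(\alpha - \beta) = \beta$.

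The main step, and the only one that uses the hypothesis in a nontrivial way, is to prove that $\psi^*(\gamma^* - \beta) > \beta$. Here I would observe that $\gamma^* > \Psi \ni \beta$ gives $\gamma^* - \beta > 0$, and that $\gamma^* \in \Psi^*$ provides some $\delta^* \in (\Gamma^*)^{\ne}$ with $\psi^*(\delta^*) = \gamma^*$. Applying (AC3) in $(\Gamma^*, \psi^*)$ to the positive element $\gamma^* - \beta$ with the witness $\delta^*$ yields
\[
(\gamma^* - \beta) + \psi^*(\gamma^* - \beta) \;>\; \psi^*(\delta^*) \;=\; \gamma^*,
\]
which rearranges to $\psi^*(\gamma^* - \beta) > \beta$.

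Finally, using (AC2) we get $\psi^*(\beta - \gamma^*) = \psi^*(\gamma^* - \beta) > \beta = \psi^*(\alpha - \beta)$, so Fact~\ref{valuationfact} applied to the decomposition $\alpha - \gamma^* = (\alpha - \beta) + (\beta - \gamma^*)$ produces $\psi^*(\alpha - \gamma^*) = \psi^*(\alpha - \beta) = \beta = s(\alpha)$, as required. I do not anticipate a serious obstacle; the one piece of cleverness is spotting that $\gamma^* \in \Psi^*$ (as opposed to merely $\gamma^* > \Psi$) is exactly the hook needed for (AC3) to produce the strict bound $\psi^*(\gamma^* - \beta) > \beta$.
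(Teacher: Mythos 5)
Your proof is correct. The paper itself gives no proof of this proposition, instead citing it as Corollary~3.5 of the prequel~\cite{gehret}, so there is no in-paper argument to compare against; but your argument is complete and uses exactly the right ingredients. Setting $\beta = s(\alpha)$, you correctly get $\psi^*(\alpha-\beta) = \beta$ from Lemma~\ref{functionproperties}(6) together with the fact that $\psi^*$ extends $\psi$, and you correctly identify the hypothesis $\gamma^*\in\Psi^*$ (not merely $\gamma^*>\Psi$) as the hook for applying (AC3) to the positive element $\gamma^*-\beta$ with a preimage $\delta^*$ of $\gamma^*$ as the witness, yielding $\psi^*(\gamma^*-\beta)>\beta$. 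With (AC2) and Fact~\ref{valuationfact} applied to the decomposition $\alpha-\gamma^*=(\alpha-\beta)+(\beta-\gamma^*)$, the conclusion follows. The edge cases ($\alpha-\beta\neq 0$ since $\int\alpha=\alpha-s\alpha\in\Gamma^{\neq}$, and $\gamma^*-\beta>0$ since $\gamma^*>\Psi\ni\beta$) are all handled.
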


\noindent
Note that Lemmas~\ref{Zinmiddle}  and~\ref{rhoZ} both give ways to construct such an extension $(\Gamma^*,\psi^*)$ as in Proposition~\ref{spsigap}. Proposition~\ref{spsigap} and Fact~\ref{valuationfact} also give the following useful way of computing values of the $\psi$-map:

\begin{cor}\cite[Lemma 3.4]{gehret}
\label{successorid}
For every $\alpha,\beta\in\Gamma$, if $s\alpha<s\beta$, then $\psi(\beta-\alpha) = s\alpha$.
\end{cor}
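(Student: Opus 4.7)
The plan is to reduce to Proposition~\ref{spsigap} by passing to a suitable extension. First, I would observe that $\emptyset \in \sded(\Psi)$ (the upward-closure condition is vacuous, and $s(\Psi) \subseteq \Psi$ since $s: \Gamma \to \Psi$), so Lemma~\ref{Zinmiddle} applied with $B = \emptyset$ produces a divisible $H$-asymptotic couple $(\Gamma^*, \psi^*) \supseteq (\Gamma, \psi)$ with asymptotic integration containing an element $\gamma^* \in \Psi^*$ satisfying $\Gamma < \gamma^*$, and in particular $\Psi < \gamma^*$.

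Given such a $\gamma^*$, Proposition~\ref{spsigap} immediately yields
\[
s\alpha = \psi^*(\alpha - \gamma^*) \quad \text{and} \quad s\beta = \psi^*(\beta - \gamma^*).
\]
From the hypothesis $s\alpha < s\beta$ I would conclude $\alpha \neq \beta$ (so that $\psi(\beta - \alpha)$ is defined in $\Gamma$, not $\infty$) and, using (AC2) to rewrite $\psi^*(\beta - \gamma^*) = \psi^*(\gamma^* - \beta)$, I would have
\[
\psi^*(\alpha - \gamma^*) < \psi^*(\gamma^* - \beta).
\]
Then Fact~\ref{valuationfact} applied to the sum $(\alpha - \gamma^*) + (\gamma^* - \beta) = \alpha - \beta$ gives $\psi^*(\alpha - \beta) = \psi^*(\alpha - \gamma^*) = s\alpha$.

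Finally, since $(\Gamma, \psi) \subseteq (\Gamma^*, \psi^*)$ and $\beta - \alpha \in \Gamma^{\neq}$, I would conclude $\psi(\beta - \alpha) = \psi^*(\beta - \alpha) = \psi^*(\alpha - \beta) = s\alpha$, using (AC2) for the middle equality. There is no real obstacle: Lemma~\ref{Zinmiddle} packages the construction of the extension, and everything else is a single application of the ultrametric inequality together with (AC2).
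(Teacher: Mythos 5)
Your argument is correct and follows precisely the route the paper indicates: the sentence immediately before Corollary~\ref{successorid} states that it follows from Proposition~\ref{spsigap} and Fact~\ref{valuationfact}, and that is exactly your chain of reasoning. One small inaccuracy worth flagging: reading Lemma~\ref{Zinmiddle} literally with $B=\emptyset$ gives $\Gamma<\gamma^*$, which cannot actually hold (by (AC3) one has $\gamma^*\in\Psi^*<((\Gamma^*)^{>})'$, so $\gamma^*<\delta'$ with $\delta'\in\Gamma$ for any $\delta\in\Gamma^{>}$); what the construction really delivers, and all you use, is $\Psi<\gamma^*$, so your argument is unaffected.
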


\medskip\noindent
As an application of Corollary~\ref{successorid} and Fact~\ref{valuationfact}, we obtain the following formulas for computing the $\psi$- and $s$-values in models of $T_0$:

\begin{lemma}\label{psisformulas}\cite[Lemma 6.4 and Corollary 6.5]{gehret}
Suppose $(\Gamma,\psi)\models T_0$. Let $n\geq 1, \alpha_1<\cdots <\alpha_n\in\Psi$, and let $\alpha = \sum_{j=1}^n q_j\alpha_j$ for $q_1,\ldots,q_n\in\Q^{\neq}$. Then
\begin{enumerate}
\item $\sum_{j=1}^n q_j=0\Longrightarrow \psi(\alpha_1) = s(\alpha_1)$,
\item $\sum_{j=1}^n q_j\neq 0 \Longrightarrow \psi(\alpha) = s0$,
\item $\sum_{j=1}^n q_j = 1\Longrightarrow s(\alpha) = s(\alpha_1)$,
\item $\sum_{j=1}^n q_j\neq 1 \Longrightarrow s(\alpha) = s0$.
\end{enumerate}
\end{lemma}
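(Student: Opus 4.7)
The plan is to compute $\psi(\alpha)$ and $s(\alpha)$ by decomposing $\alpha$ into pieces whose $\psi$-values we can read off from Corollary~\ref{successorid} and Proposition~\ref{spsigap}, then to extract the dominant piece using the ultrametric triangle inequality (Fact~\ref{valuationfact}). A preliminary observation used throughout: $\psi(\alpha_j) = s0$ for each $\alpha_j \in \Psi$. This follows from Corollary~\ref{successorid} applied to the pair $0, \alpha_j$ once one notes that $s(0) = s0 < s(\alpha_j)$ (since $s(\alpha_j)$ is the successor of $\alpha_j \geq s0$ in $\Psi$). Moreover, $s\alpha_1 < \cdots < s\alpha_n$ by the strict monotonicity of the successor function on $\Psi$. (The statement of (1) should read $\psi(\alpha) = s(\alpha_1)$; I proceed under that reading.)

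For parts (1) and (2), set $c := \sum_{j=1}^n q_j$ and rewrite
\[
\alpha \;=\; c\,\alpha_n \;+\; \sum_{j=1}^{n-1} q_j(\alpha_j - \alpha_n).
\]
For each $j < n$, Corollary~\ref{successorid} gives $\psi\bigl(q_j(\alpha_j - \alpha_n)\bigr) = s\alpha_j$, and since these values $s\alpha_1 < \cdots < s\alpha_{n-1}$ are all distinct, iterated application of Fact~\ref{valuationfact} yields $\psi\bigl(\sum_{j=1}^{n-1} q_j(\alpha_j - \alpha_n)\bigr) = s\alpha_1$. If $c = 0$, the first summand vanishes, so $\psi(\alpha) = s\alpha_1$, proving (1). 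If $c \neq 0$, then $\psi(c\,\alpha_n) = s0 < s\alpha_1$, and Fact~\ref{valuationfact} gives $\psi(\alpha) = s0$, proving (2).

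For parts (3) and (4), I pass to an extension $(\Gamma^*, \psi^*) \supseteq (\Gamma, \psi)$ (a divisible $H$-asymptotic couple with asymptotic integration) containing some $\gamma^* \in \Psi^*$ with $\Psi < \gamma^*$, as furnished by Lemma~\ref{Zinmiddle} applied with $B = \emptyset$. By Proposition~\ref{spsigap}, $s(\beta) = \psi^*(\beta - \gamma^*)$ for every $\beta \in \Gamma$; in particular $\psi^*(\gamma^*) = s0$ and $\psi^*(\alpha_j - \gamma^*) = s\alpha_j$. Decompose
\[
\alpha - \gamma^* \;=\; \sum_{j=1}^{n} q_j(\alpha_j - \gamma^*) \;+\; (c-1)\gamma^*.
\]
Just as before, the distinct $\psi^*$-values $s\alpha_j$ give $\psi^*\bigl(\sum_{j=1}^n q_j(\alpha_j - \gamma^*)\bigr) = s\alpha_1$. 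If $c = 1$, the $\gamma^*$-term vanishes, so $s(\alpha) = s\alpha_1$, proving (3). If $c \neq 1$, then $\psi^*((c-1)\gamma^*) = s0 < s\alpha_1$, and Fact~\ref{valuationfact} yields $s(\alpha) = \psi^*(\alpha - \gamma^*) = s0$, proving (4).

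The main subtlety to verify is that the intermediate sums to which we apply $\psi$ or $\psi^*$ are actually nonzero (otherwise the value would be $\infty$ rather than $s\alpha_1$); this reduces to the $\Q$-linear independence of $\Psi$ in models of $T_0$, which is preserved by the inductive construction of extensions in Lemmas~\ref{Zinmiddle} and~\ref{rhoZ}. For the $\gamma^*$-decomposition it is automatic, since $\sum q_j \alpha_j \in \Gamma$ while $\gamma^* \in \Gamma^* \setminus \Gamma$.
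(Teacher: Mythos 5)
Your proof is correct, and it is the natural argument given the toolkit (Corollary~\ref{successorid} and Proposition~\ref{spsigap} are exactly~\cite[Lemma~3.4, Cor.~3.5]{gehret}, so this presumably mirrors the cited proof from the prequel). You also correctly spotted that (1) should read $\psi(\alpha)=s(\alpha_1)$ rather than $\psi(\alpha_1)=s(\alpha_1)$. Two small remarks. First, your closing worry about nonvanishing of the intermediate sums is a red herring: you do not need $\Q$-linear independence of $\Psi$. The iterated application of Fact~\ref{valuationfact} at each step produces a partial sum whose $\psi$- (or $\psi^*$-) value equals the minimum of the finitely many \emph{distinct} values $s\alpha_j$ already computed; since that minimum lies in $\Psi \subsetneq \Gamma_\infty$, the partial sum cannot be $0$ (whose $\psi$-value would be $\infty$). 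So the ultrametric structure already forces every intermediate sum to be nonzero, and the claim $\alpha \neq 0$ in parts (1) and (2) comes out as a byproduct of the computation rather than a prerequisite for it. Second, when $n=1$ the empty sum $\sum_{j=1}^{n-1}q_j(\alpha_j-\alpha_n)$ is $0$ with $\psi$-value $\infty$, not $s\alpha_1$; the conclusion of (2) still holds because $s0<\infty$, but it is worth flagging that the sentence ``iterated application of Fact~\ref{valuationfact} yields $\psi(\cdots)=s\alpha_1$'' implicitly assumes $n\geq 2$.
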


\medskip\noindent
Lemma~\ref{psisformulas} was useful in proving~\cite[Corollary 7.2]{gehret}: in models of $T_0$, the subset $\Psi$ of $\Gamma$ is stably embedded in $(\Gamma,\psi)$. We use it here in Section~\ref{examples} below.

\section{Simple Extensions}
\label{sectionsimpleextensions}

\noindent
For a model $(\Gamma,\psi)$ of $T_0$, we define the function $p:\Psi^{>s0}\to\Psi$ to be the inverse to the function $\gamma\mapsto s\gamma:\Psi\to\Psi^{>s0}$. We extend $p$ to a function $\Gamma_{\infty}\to\Gamma_{\infty}$ by setting $p(\alpha) := \infty$ for $\alpha\in\Gamma_{\infty}\setminus\Psi^{>s0}$.

\medskip\noindent
Next let $L_{\log} = L_{AC}\cup\{s,p,\delta_1,\delta_2,\delta_3,\ldots\}$ where $s$, $p$ and $\delta_n$ for $n\geq 1$ are unary function symbols. All models of $T_0$ are considered as $L_{\log}$-structures in the obvious way, again with $\infty$ as a default value, and with $\delta_n$ interpreted as division by $n$. 

\medskip\noindent
We let $T_{\log}$ be the $L_{\log}$-theory whose models are the models of $T_0$. By adding function symbols $s,p,\delta_1,\delta_2,\ldots$ we have guaranteed that $T_{\log}$ has a universal axiomatization, has quantifier elimination, is complete and is model complete; see Section 5 of~\cite{gehret}.

\medskip\noindent
\emph{For the rest of this section we let $\GAMMA = (\GAMMA,\psi,s,p,\ldots)$ be a monster model of $T_{\log}$.}
All other models considered will be small submodels of $\GAMMA$. In particular, we consider an arbitrary $\Gamma = (\Gamma,\psi,s,p,\ldots)$ of cardinality $\leq\kappa<\kappa(\M)$. The element $\alpha$ will range over $\GAMMA$ and we will assume $\alpha\not\in\Gamma$ to avoid some trivial cases. Note that the set $\Psi = \Psi_{\Gamma}$ will always contain the initial copy of $\N$ together with at most $\kappa$-many copies of $\Z$, whereas the set $\Psi_{\M}\setminus\Psi$ is the union of all copies of $\Z$ in $\Psi_{\M}$ that aren't part of $\Psi$.

\medskip\noindent
When considering simple extensions $\Gamma\langle\alpha\rangle$ of $\Gamma$ (in the language $L_{\log}$), it is useful to know whether the ordered abelian group $\Gamma\oplus\Q\alpha$ is already closed under the primitives $\psi$ and $s$. If it is not closed, then we want to know how badly $\Gamma\oplus\Q\alpha$ fails to be closed under $\psi$ and $s$. This motivates defining the following subsets of $\Psi_{\GAMMA}$:
\begin{align*}
\Q^{\neq}\alpha-\Gamma &:= \{q\alpha-\gamma:q\in\Q^{\neq}\text{ and }\gamma\in\Gamma\} \\
\psi(\Q^{\neq}\alpha-\Gamma) &:= \{\psi(q\alpha-\gamma):q\in\Q^{\neq}\text{ and }\gamma\in\Gamma\} \\
s(\Q^{\neq}\alpha-\Gamma) &:=  \{s(q\alpha-\gamma):q\in\Q^{\neq}\text{ and }\gamma\in\Gamma\} \\
T_{\Gamma}(\alpha) &:= \psi(\Q^{\neq}\alpha-\Gamma) \cup s(\Q^{\neq}\alpha-\Gamma).
\end{align*}
Note that $\psi(\Q^{\neq}\alpha-\Gamma) = \psi(\alpha-\Gamma) := \{\psi(\alpha-\gamma):\gamma\in\Gamma\}$ by (AC2). 

\medskip\noindent
Since $T_{\Gamma}(\alpha)$ is defined using the primitives $\psi$ and $s$, and $\alpha\not\in\Gamma$, it is clear that $T_{\Gamma}(\alpha)\subseteq\Psi_{\M}$. If $T_{\Gamma}(\alpha)\subseteq \Psi = \Psi_{\Gamma}$, then the ordered abelian group $\Gamma\oplus\Q\alpha$ is already closed under the primitives $\psi$ and $s$. However, if $T_{\Gamma}(\alpha)\setminus \Psi$ is nonempty, then $\Gamma\oplus\Q\alpha$ is not closed under $\psi$ and $s$ and then we are interested in the possibilities of the set $T_{\Gamma}(\alpha)\setminus \Psi$. 

\medskip\noindent
As we will show below in Corollary~\ref{tracetypes}, the set $T_{\Gamma}(\alpha)\setminus\Psi$ is either empty, or contains a single element in $\Psi_{\M}\setminus \Psi$. At any rate, since $T_{\Gamma}(\alpha)\subseteq\Gamma\langle\alpha\rangle$, all elements of $T_{\Gamma}(\alpha)\setminus\Psi$ must get added to $\Gamma$ in order to have any chance at closing off under $s$ and $\psi$.

\begin{remark}
\label{alsop} In fact, $T_{\Gamma}(\alpha)\setminus\Psi$ also measures the failure of $\Gamma\oplus\Q\alpha$ to be closed under $p$ in the following way: if $p(q\alpha-\gamma)\in\Psi_{\GAMMA}\setminus\Psi$, then $q\alpha-\gamma\in\Psi_{\GAMMA}\setminus\Psi$ and in particular, $s(q\alpha-\gamma)\in \Psi_{\GAMMA}\setminus\Psi$. For such a $q\alpha-\gamma$, $p(q\alpha-\gamma)$ and $s(q\alpha-\gamma)$ will be on the same copy of $\Z$ in $\Psi_{\GAMMA}\setminus\Psi$. Thus if $\Gamma\oplus\Q\alpha$ is not closed under $p$, then this failure is already recognized by the fact that $\Gamma\oplus\Q\alpha$ isn't closed under $s$.
\end{remark}

\medskip\noindent
In view of Proposition~\ref{spsigap} which relates the functions $\psi$ and $s$ through a translation by an external parameter, it may come as no surprise that $\psi(\Q^{\neq}\alpha-\Gamma)$ and $s(\Q^{\neq}\alpha-\Gamma)$ are very similar as the following two lemmas show:

\begin{lemma}
\label{downward}
Let $\Delta$ be either $\psi(\Q^{\neq}\alpha-\Gamma)$ or $s(\Q^{\neq}\alpha-\Gamma)$. Then for $\beta_0\in\GAMMA$, $\beta_1\in\Delta$ such that $\beta_0<\beta_1$, we have $\beta_0\in\Psi$ iff $\beta_0\in\Delta$. In particular, $\Delta\cap\Psi$ is a downward closed subset of $\Psi$ and $\Delta\setminus\Psi$ consists of at most one element $\beta$; furthermore, such $\beta$ realizes the cut $(\Delta\cap\Psi, \Psi\setminus\Delta)$ in $\Psi$.
\end{lemma}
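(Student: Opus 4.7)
The plan is to reduce the biconditional to a single valuation-theoretic argument based on Fact~\ref{valuationfact}, passing to an extension to treat the $s$-case. I would first dispose of $\Delta = \psi(\Q^{\neq}\alpha - \Gamma)$ directly. Write $\beta_1 = \psi(q_1\alpha - \gamma_1)$. For Direction 1 (given $\beta_0 = \psi(\gamma_0) \in \Psi$ with $\gamma_0 \in \Gamma^{\neq}$), Fact~\ref{valuationfact} applied to $(q_1\alpha - \gamma_1) - \gamma_0 = q_1\alpha - (\gamma_1 + \gamma_0)$ gives $\psi(q_1\alpha - (\gamma_1 + \gamma_0)) = \beta_0 \in \Delta$; the needed nonvanishing comes from $\alpha \notin \Gamma$. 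For Direction 2, given $\beta_0 = \psi(q_0\alpha - \gamma_0) \in \Delta$, the key trick is the cross-combination
\[
q_1(q_0\alpha - \gamma_0) - q_0(q_1\alpha - \gamma_1) = q_0\gamma_1 - q_1\gamma_0 \in \Gamma,
\]
whose $\psi$-value equals $\beta_0$ by Fact~\ref{valuationfact} (using (AC2) and divisibility to pass $\psi$ through rational scalars), and which is nonzero since $q_0\gamma_1 = q_1\gamma_0$ would force $q_0\alpha - \gamma_0$ to be a rational multiple of $q_1\alpha - \gamma_1$, giving $\beta_0 = \beta_1$.

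For the $s$-case, I would apply Lemma~\ref{Zinmiddle} with $B = \emptyset$ to produce an extension $(\GAMMA^*, \psi^*) \supseteq (\GAMMA, \psi)$ (a model of $T_0$ by Lemma~\ref{Zinmiddle}(5)) together with some $\gamma^* \in \Psi^*$ satisfying $\gamma^* > \GAMMA$. Proposition~\ref{spsigap} then yields $s(\delta) = \psi^*(\delta - \gamma^*)$ for every $\delta \in \GAMMA$, so $\Delta = \psi^*(\Q^{\neq}\alpha - C)$ for the coset $C = \Gamma + \gamma^*$. Direction 1 runs exactly as in the $\psi$-case (using $C + \Gamma = C$, and nonvanishing from $\gamma^* \notin \GAMMA$). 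For Direction 2 with $\beta_i = \psi^*(q_i\alpha - c_i)$ and $c_i = \gamma_i + \gamma^*$, the cross-combination becomes
\[
q_1(q_0\alpha - c_0) - q_0(q_1\alpha - c_1) = (q_0\gamma_1 - q_1\gamma_0) + (q_0 - q_1)\gamma^*;
\]
when $q_0 = q_1$ the $\gamma^*$-term cancels and we conclude as before.

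The main obstacle is the subcase $q_0 \neq q_1$, where the combination lies in $\Gamma + \Q^{\neq}\gamma^*$ rather than in $\Gamma$. Here $\gamma^* > \GAMMA$ forces $(q_0 - q_1)\gamma^*$ to dominate the $\Gamma$-part in archimedean class, so $H$-type convexity together with (AC2) gives $\psi^*$ of the combination $= \psi^*(\gamma^*)$, hence $\beta_0 = \psi^*(\gamma^*)$. I would finish by invoking the separately derivable fact that in any model of $T_0$ the map $\psi$ is constant on $\Psi$ with value $s0$: applying (AC3) at $s0$ yields $\Psi < 2s0$, which together with $s0 \leq \Psi$ forces every element of $\Psi$ into the archimedean class of $s0$; combining with $\psi(s0) = s0$ (from Lemma~\ref{functionproperties}(6) at $\alpha = 0$) and $H$-type, $\psi(\beta) = s0$ for all $\beta \in \Psi$. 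Applied in $\GAMMA^*$, this gives $\beta_0 = s0 \in \Psi_\Gamma$, completing Direction 2. The ``in particular'' clauses are then formal consequences of the biconditional: downward-closedness of $\Delta \cap \Psi$ follows from Direction 1, two distinct elements of $\Delta \setminus \Psi$ would be comparable and violate Direction 2, and the cut-realization of the unique $\beta \in \Delta \setminus \Psi$ follows by applying Directions 1 and 2 to putative witnesses on either side in $\Psi$.
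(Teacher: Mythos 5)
Your $\psi$-case is correct, and your cross-combination trick $q_1(q_0\alpha-\gamma_0)-q_0(q_1\alpha-\gamma_1)\in\Gamma$ is a fine substitute for the paper's use of the (AC2)-normalization $\psi(\Q^{\neq}\alpha-\Gamma)=\psi(\alpha-\Gamma)$. Direction 1 of the $s$-case is also fine (though note the paper picks $\gamma^*$ \emph{inside} $\Psi_{\GAMMA}$ above $\Psi_{\Gamma\langle\alpha\rangle}$ by saturation, rather than in an external extension; either works).

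There is, however, a genuine gap in Direction 2 of the $s$-case when $q_0\neq q_1$. The premise ``$\gamma^*>\GAMMA$'' is false: Lemma~\ref{Zinmiddle} with $B=\emptyset$ only gives $\gamma^*>\Psi_{\GAMMA}$, and indeed $\gamma^*<2s0\in\GAMMA$, so $\gamma^*$ is bounded inside $\GAMMA$. Consequently there is no archimedean-class domination: $[\gamma^*]=[s0]$, while the $\Gamma$-part $q_0\gamma_1-q_1\gamma_0$ can have archimedean class equal to or larger than $[s0]$, and cancellation can push the combination into a smaller class. Concretely, with $\Gamma=\Gamma_{\log}^{\Q}$ and $\alpha=\sqrt 2\,e_2$ (Example~1), take $\beta_0=s^20=s(\alpha-\gamma_0)$ with $\gamma_0=-e_0$, and $\beta_1=s^30=s(2\alpha-\gamma_1)$ with $\gamma_1=-e_0-e_1$; here $q_0=1\neq 2=q_1$, the cross-combination is $(e_0-e_1)-\gamma^*=(e_0-\gamma^*)-e_1$, and since $[e_0-\gamma^*]=[e_1]$ the $\psi^*$-value is $s^20=\beta_0$, \emph{not} $\psi^*(\gamma^*)=s0$. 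Your intermediate claim $\beta_0=s0$ is simply wrong here (although $\beta_0\in\Psi$ happens to hold). The correct move --- which is exactly what the paper does --- is to renormalize by $(q_1-q_0)$: by (AC2),
\[
\beta_0=\psi^*\!\Bigl((q_0\gamma_1-q_1\gamma_0)+(q_0-q_1)\gamma^*\Bigr)=\psi^*\!\left(\frac{q_0\gamma_1-q_1\gamma_0}{q_1-q_0}-\gamma^*\right)=s\!\left(\frac{q_0\gamma_1-q_1\gamma_0}{q_1-q_0}\right)\in\Psi,
\]
using Proposition~\ref{spsigap} in the last step. Your auxiliary observation that $\psi\equiv s0$ on $\Psi$ is correct, but it is not what is needed here.
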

\begin{proof}
First, consider the case that $\Delta = \psi(\Q^{\neq}\alpha-\Gamma) = \psi(\alpha-\Gamma)$ and let $\beta_0\in\M$ and $\beta_1\in\Delta$ be arbitrary such that $\beta_0<\beta_1$. Then $\beta_1=\psi(\alpha-\gamma_1)$ for some $\gamma_1\in \Gamma$.  First suppose that $\beta_0\in \Psi$. Then there is $\gamma_0\in\Gamma$ such that $\beta_0 = \psi(\gamma_0)<\psi(\alpha-\gamma_1)=\beta_1$. Note that
\[
\beta_0 = \psi(\gamma_0) = \psi(\gamma_0-(\alpha-\gamma_1)) = \psi(\alpha-(\gamma_0+\gamma_1))\in\Delta.
\]
Conversely, if $\beta_0\in\Delta$, then $\beta_0=\psi(\alpha-\gamma_0)$ for some $\gamma_0\in\Gamma$. It then follows from $\beta_0=\psi(\alpha-\gamma_0)<\psi(\alpha-\gamma_1)=\beta_1$ that
\[
\beta_0 = \psi(\alpha-\gamma_0) = \psi((\alpha-\gamma_0)-(\alpha-\gamma_1)) = \psi(\gamma_1-\gamma_0)\in\Psi.
\]


Next, consider the case that $\Delta = s(\Q^{\neq}\alpha-\Gamma)$ and let $\beta_0\in\M$ and $\beta_1\in\Delta$ be arbitrary such that $\beta_0<\beta_1$. Then $\beta_1 = s(q_1\alpha-\gamma_1)$ for some $q_1\in\Q^{\neq}$ and $\gamma_1\in\Gamma$. We will also take $\gamma^*\in\Psi_{\GAMMA}$ such that $\gamma^*>\Psi_{\Gamma\langle\alpha\rangle}$.  
First suppose that $\beta_0\in\Psi$. Then $\beta_0 = \psi(\gamma_0)$ for some $\gamma_0\in\Gamma$ and thus $\beta_0 = \psi(\gamma_0)<s(q_1\alpha-\gamma_1) = \beta_1$. Then by Proposition~\ref{spsigap},
\[
\beta_0 = \psi(\gamma_0) =  \min(s(q_1\alpha-\gamma_1),\psi(\gamma_0)) = \min(\psi(q_1\alpha-\gamma_1-\gamma^*),\psi(\gamma_0)) 
\]
\[
= \psi(q_1\alpha-\gamma_1-\gamma^* - \gamma_0) = s(q_1\alpha-(\gamma_1+\gamma_0)) \in \Delta.
\]
Conversely, if $\beta_0\in \Delta$, then $\beta_0 = s(q_0\alpha-\gamma_0)$ for some $q_0\in\Q^{\neq}$ and $\gamma_0\in\Gamma$. Then 
$\beta_0=s(q_0\alpha-\gamma_0)<s(q_1\alpha-\gamma_1) = \beta_1$, and it follows that
\[
\beta_0 = \psi(\alpha-q_0^{-1}\gamma_0-q_0^{-1}\gamma^*) < \psi(\alpha-q_1^{-1}\gamma_1-q_1^{-1}\gamma^*)
\]
and so
\[
\beta_0 = \psi(q_1^{-1}\gamma_1- q_0^{-1}\gamma_0 + (q_1^{-1}-q_0^{-1})\gamma^*).
\]
If $q_0=q_1$, then $\beta_0\in \Psi$. Otherwise,
\[
\beta_0 = \psi\left( -  \frac{q_1^{-1}}{q_1^{-1}-q_0^{-1}}\gamma_1+ \frac{q_0^{-1}}{q_1^{-1}-q_0^{-1}}\gamma_0 - \gamma^*  \right) = s\left(-  \frac{q_1^{-1}}{q_1^{-1}-q_0^{-1}}\gamma_1+ \frac{q_0^{-1}}{q_1^{-1}-q_0^{-1}}\gamma_0\right) \in\Psi. \qedhere
\]

\end{proof}

\begin{lemma}
\label{nearlyequal}
$s(\Q^{\neq}\alpha-\Gamma)\cap\Psi= \psi(\Q^{\neq}\alpha-\Gamma)\cap\Psi$. Furthermore, $s(\Q^{\neq}\alpha-\Gamma)\triangle\psi(\Q^{\neq}\alpha-\Gamma)$ consists of at most one element.
\end{lemma}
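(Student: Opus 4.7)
The plan is to handle the two assertions in turn. For the equation $s(\Q^{\neq}\alpha - \Gamma) \cap \Psi = \psi(\Q^{\neq}\alpha - \Gamma) \cap \Psi$, the key tool is Lemma~\ref{functionproperties}(6): $\beta = s(\delta)$ iff $\beta = \psi(\delta - \beta)$. Given $\beta = \psi(q\alpha - \gamma) \in \Psi$, set $\delta := q\alpha - (\gamma - \beta)$; since $\beta \in \Psi \subseteq \Gamma$, we have $\gamma - \beta \in \Gamma$, hence $\delta \in \Q^{\neq}\alpha - \Gamma$, and $\psi(\delta - \beta) = \psi(q\alpha - \gamma) = \beta$ yields $s(\delta) = \beta$. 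The reverse inclusion is entirely symmetric: if $\beta = s(q\alpha - \gamma) \in \Psi$, then $\beta = \psi((q\alpha - \gamma) - \beta) = \psi(q\alpha - (\gamma + \beta))$ with $\gamma + \beta \in \Gamma$.

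For the symmetric difference claim, Lemma~\ref{downward} applied separately to $\psi(\Q^{\neq}\alpha - \Gamma)$ and $s(\Q^{\neq}\alpha - \Gamma)$ says each contains at most one element outside $\Psi$. Together with the first equation, the symmetric difference has at most two elements, so it remains to show that when both ``extras'' exist they coincide. Suppose $\beta_\psi = \psi(q_0\alpha - \gamma_0) \in \Psi_{\M} \setminus \Psi$ and $\beta_s = s(q_1\alpha - \gamma_1) \in \Psi_{\M} \setminus \Psi$; by symmetry assume $\beta_\psi < \beta_s$.

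The hard part is deriving $\beta_\psi \in \Psi$ (contradiction) from these data. The argument of the first paragraph breaks down because the candidate witness $\gamma \pm \beta$ is no longer in $\Gamma$ once $\beta \notin \Psi$. To circumvent this I pass to an extension: apply Lemma~\ref{Zinmiddle} to the base $\M$ with $B = \emptyset$ to obtain $(\M^*, \psi^*) \supseteq (\M, \psi)$ containing some $\gamma^* \in \Psi^*$ with $\gamma^* > \Psi_{\M}$; Proposition~\ref{spsigap} then gives $s(\delta) = \psi^*(\delta - \gamma^*)$ for all $\delta \in \M$, so that $\psi^*(q_0\alpha - \gamma_0) = \beta_\psi$ and $\psi^*(q_1\alpha - \gamma_1 - \gamma^*) = \beta_s$. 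The $\Q$-linear combination
\[
(q_0\alpha - \gamma_0) - \tfrac{q_0}{q_1}(q_1\alpha - \gamma_1 - \gamma^*) = \eta + \tfrac{q_0}{q_1}\gamma^*,
\]
with $\eta := \tfrac{q_0}{q_1}\gamma_1 - \gamma_0 \in \Gamma$, cancels the $\alpha$-term; by Fact~\ref{valuationfact} its $\psi^*$-value equals $\min(\beta_\psi, \beta_s) = \beta_\psi$. Using (AC2) to rescale by $q_1/q_0$ and to negate, this value also equals $\psi^*(\eta' - \gamma^*)$ for $\eta' := -\tfrac{q_1}{q_0}\eta \in \Gamma$, which by Proposition~\ref{spsigap} is $s(\eta') \in \Psi$. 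Hence $\beta_\psi = s(\eta') \in \Psi$, the desired contradiction. The case $\beta_s < \beta_\psi$ is analogous and in fact slightly easier: the cancellation directly produces $\tilde\eta - \gamma^*$ with $\tilde\eta \in \Gamma$, whose $\psi^*$-value is $\beta_s = s(\tilde\eta) \in \Psi$, again a contradiction. Thus $\beta_\psi = \beta_s$ and the symmetric difference has at most one element.
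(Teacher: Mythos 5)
Your proof is correct, but it departs from the paper's argument in both halves, in ways worth recording. For the equality $s(\Q^{\neq}\alpha-\Gamma)\cap\Psi=\psi(\Q^{\neq}\alpha-\Gamma)\cap\Psi$, you use the internal characterization of Lemma~\ref{functionproperties}(6) and simply translate the parameter $\gamma$ by $\pm\beta$, which is legitimate precisely because $\beta\in\Psi\subseteq\Gamma$; this gives both inclusions symmetrically and avoids the paper's case split on whether $s(\Q^{\neq}\alpha-\Gamma)\cap\Psi$ is cofinal in $\Psi$ and its auxiliary choice of $\rho$. Your version is shorter and arguably the "right" proof of that half. For the symmetric difference, the paper fixes the $s$-extra $\beta$ and computes $\psi(q\alpha-\gamma-\delta)=\min(\beta,s\delta)$ for all $\delta\in\Gamma$, thereby showing outright that $\psi(\Q^{\neq}\alpha-\Gamma)\subseteq\Psi\cup\{\beta\}$; you instead suppose both extras exist and eliminate $\alpha$ between the two witnesses, working in an extension where $s$ becomes $\psi^*(\cdot-\gamma^*)$ and invoking Fact~\ref{valuationfact} to locate the $\psi^*$-value of the cancellation at $\min(\beta_\psi,\beta_s)$, which Proposition~\ref{spsigap} then identifies as an $s$-value of an element of $\Gamma$, contradicting $\min(\beta_\psi,\beta_s)\notin\Psi$. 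Both routes rest on the same translation-by-$\gamma^*$ device; yours yields slightly less information (it does not describe $\psi(\Q^{\neq}\alpha-\Gamma)$ explicitly) but fully suffices for the stated lemma. Two cosmetic points: in the case $\beta_s<\beta_\psi$ the cancellation actually produces $\tilde\eta+\gamma^*$, so one should negate via (AC2) before applying Proposition~\ref{spsigap}; and (AC2) is stated for integer scalars, so the rational rescalings you perform implicitly use divisibility of the ambient group, as the paper itself does when identifying $\psi(\Q^{\neq}\alpha-\Gamma)$ with $\psi(\alpha-\Gamma)$.
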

\begin{proof}
Suppose $\beta_0\in s(\Q^{\neq}\alpha-\Gamma)\cap\Psi$. Let $q\in\Q^{\neq}$ and $\gamma_0\in\Gamma$ be such that $\beta_0 = s(q\alpha-\gamma_0)$. Let $\gamma_1\in\Gamma$ be such that $s(\gamma_1)>\beta_0 = s(q\alpha-\gamma_0)$. Then Corollary~\ref{successorid} implies
\[
\psi(q\alpha-(\gamma_1+\gamma_0)) = \psi(\gamma_1 - (q\alpha-\gamma_0)) = s(q\alpha-\gamma_0) = \beta_0
\]
and so $\beta_0\in\psi(\Q^{\neq}\alpha-\Gamma)\cap\Psi$.

Next we consider two cases. First suppose $s(\Q^{\neq}\alpha-\Gamma)\cap\Psi$ is cofinal in $\Psi$. Since it is also downward closed in $\Psi$, it is necessarily the case that $\Psi = s(\Q^{\neq}\alpha-\Gamma)\cap\Psi \subseteq \psi(\Q^{\neq}\alpha-\Gamma)\cap\Psi\subseteq\Psi$, so we get equality throughout.

Otherwise, by Lemma~\ref{downward} we can take $\rho\in\Psi$ such that $s(\Q^{\neq}\alpha-\Gamma)<\rho$. Let $\gamma\in\Gamma$ be arbitrary such that $\psi(\alpha-\gamma)\in \psi(\Q^{\neq}\alpha-\Gamma)\cap\Psi$. Then by choice of $\rho\in \Psi$ we have
\[
\underbrace{s(\alpha-\gamma+\rho)}_{\in s(\Q^{\neq}\alpha-\Gamma)} <\rho < s(\rho).
\]
Thus by Corollary~\ref{successorid} we have
\[
\psi(\alpha-\gamma) = \psi((\alpha-\gamma+\rho)-\rho) = s(\alpha-\gamma+\rho)\in s(\Q^{\neq}\alpha-\Gamma)
\]
and we conclude that $ \psi(\Q^{\neq}\alpha-\Gamma)\cap\Psi\subseteq s(\Q^{\neq}\alpha-\Gamma)\cap\Psi$.

Finally, suppose $s(\Q^{\neq}\alpha-\Gamma)\setminus\Psi = \{\beta\}$ where $\beta\in\Psi_{\GAMMA}\setminus\Psi$. Then we will show that $\psi(\Q^{\neq}\alpha-\Gamma) \subseteq\Psi\cup\{ \beta\}$. 
Take $\gamma^{\ast}\in\Psi_{\GAMMA}$ such that $\gamma^{\ast}>\Psi_{\Gamma\langle\alpha\rangle}$ and take $q\in\Q^{\neq}$ and $\gamma\in\Gamma$ such that $\beta = s(q\alpha-\gamma) = \psi(q\alpha-\gamma-\gamma^{\ast})$. Let $\delta\in\Gamma$ be arbitrary. Note that
\begin{align*}
\psi(q\alpha-\gamma-\delta) &= \psi((q\alpha-\gamma-\gamma^{\ast})-(\delta-\gamma^{\ast})) \\
&\geq \min(\psi(q\alpha-\gamma-\gamma^{\ast}),\psi(\delta-\gamma^{\ast})) \\
&= \min(s(q\alpha-\gamma),s(\delta)) \\
&= \min(\beta,s(\delta)).
\end{align*}
But since $\beta\not\in\Psi$ and $s(\delta)\in\Psi$, we actually get $\psi(q\alpha-\gamma-\delta) = \min(\beta,s(\delta))$. Since $q\neq 0$ and as $\delta$ ranges over $\Gamma$, $\gamma+\delta$ will also range over $\Gamma$, together with (AC2) this argument shows that $\psi(\Q^{\neq}\alpha-\Gamma)\subseteq \Psi\cup\{\beta\}$.
\end{proof}

\medskip\noindent
It follows that $T_{\Gamma}(\alpha)$ occurs in only three different ways:

\begin{cor}
\label{tracetypes}
Exactly one of the following is true:
\begin{enumerate}
\item $T_{\Gamma}(\alpha) = [s0,\beta]_{\Psi} = \Psi^{\leq\beta}\subseteq \Psi$ for some $\beta\in\Psi$.
\item $T_{\Gamma}(\alpha) = B$ where $B\subseteq\Psi$ is nonempty, downward closed and is such that $s(B)\subseteq B$ (i.e., $\Psi\setminus B\in\sded(\Psi)$).
\item $T_{\Gamma}(\alpha) = B\cup\{\beta\}$ where $B\subseteq\Psi$ is nonempty, downward closed and is such that $s(B)\subseteq B$ and $\beta\in\Psi_{\GAMMA}\setminus\Psi$ and $B<\beta<(\Psi\setminus B)$.
\end{enumerate}
In particular, $|T_{\Gamma}(\alpha)\setminus\Psi|\leq 1$.
\end{cor}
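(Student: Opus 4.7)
The plan is to combine Lemmas~\ref{downward} and~\ref{nearlyequal} with a structural fact about $s$-classes in $\Psi_{\M}$. Set $B := T_{\Gamma}(\alpha) \cap \Psi$. By the first assertion of Lemma~\ref{nearlyequal}, the sets $\psi(\Q^{\neq}\alpha - \Gamma) \cap \Psi$ and $s(\Q^{\neq}\alpha - \Gamma) \cap \Psi$ both coincide with $B$. Lemma~\ref{downward} then shows that $B$ is downward closed in $\Psi$, and that each of $\psi(\Q^{\neq}\alpha - \Gamma) \setminus \Psi$ and $s(\Q^{\neq}\alpha - \Gamma) \setminus \Psi$ is either empty or a singleton realizing the cut $(B, \Psi \setminus B)$ in $\Psi$.

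To get $|T_{\Gamma}(\alpha) \setminus \Psi| \leq 1$: if both singletons existed with distinct values $\beta_1 \neq \beta_2$, then $\beta_1$ would lie in $\psi(\Q^{\neq}\alpha - \Gamma) \setminus s(\Q^{\neq}\alpha - \Gamma)$ and $\beta_2$ in $s(\Q^{\neq}\alpha - \Gamma) \setminus \psi(\Q^{\neq}\alpha - \Gamma)$, contradicting the second assertion of Lemma~\ref{nearlyequal}. Hence $T_{\Gamma}(\alpha) \setminus \Psi$ has at most one element $\beta$, and if $\beta$ exists it realizes the cut $(B, \Psi \setminus B)$ in $\Psi$.

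The three cases are then distinguished by the shape of $T_{\Gamma}(\alpha)$. If $T_{\Gamma}(\alpha) \subseteq \Psi$, then $T_{\Gamma}(\alpha) = B$, which is nonempty because $s(\alpha) = s(1\cdot\alpha - 0) \in T_{\Gamma}(\alpha)$ lies in $\Psi$ in this subcase. Either $B$ has a maximum $\beta \in \Psi$, in which case $B = \Psi^{\leq \beta}$ and we are in case (1); or $B$ has no maximum, whence $s(B) \subseteq B$ (for $\gamma \in B$, pick $\gamma' > \gamma$ in $B$; then $s\gamma \leq \gamma'$, so $s\gamma \in B$ by downward closedness), giving case (2).

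The remaining possibility is $T_{\Gamma}(\alpha) \setminus \Psi = \{\beta\}$ with $\beta \in \Psi_{\M} \setminus \Psi$, yielding case (3). The main obstacle is to verify $s(B) \subseteq B$. The key observation is that, because $\Gamma \preceq \M$, the set $\Psi$ is closed under both $s$ and $p$, and hence no $s$-class of $\Psi_{\M}$ can meet both $\Psi$ and $\Psi_{\M} \setminus \Psi$ (otherwise iterating $s$ or $p$ from the $\Psi$-point would reach the non-$\Psi$-point while staying in $\Psi$). Thus for every $\gamma_0 \in \Psi$ with $\gamma_0 < \beta$, the elements $\gamma_0$ and $\beta$ lie in distinct $s$-classes, forcing $\gamma_0 \ll \beta$ (the alternative $\beta \ll \gamma_0$ is ruled out by $\gamma_0 < \beta$); that is, $s^n(\gamma_0) < \beta$ for all $n$. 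Lemma~\ref{downward} then puts $s^n(\gamma_0) \in B$ for every $n$, whence $s(B) \subseteq B$. The same observation applied to $\gamma_0 = s0$ shows $B$ is nonempty, completing case (3).
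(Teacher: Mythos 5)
Your proof is correct and follows the same route the paper intends: the corollary is stated there as an immediate consequence of Lemmas~\ref{downward} and~\ref{nearlyequal}, with no further argument given. You correctly supply the one nontrivial detail — that $s(B)\subseteq B$ in case (3) — using the fact that $s$ is the immediate-successor function on $\Psi_{\M}$ and $\Psi$ is closed under $s$ and $p$, so no $s$-class straddles $\Psi$ and $\Psi_{\M}\setminus\Psi$.
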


\medskip\noindent
Note that if $T_{\Gamma}(\alpha)\subseteq\Psi$ for a particular $\Gamma$ and $\alpha\in\GAMMA$, then $\Gamma\oplus\Q\alpha$ as an ordered abelian subgroup of $\GAMMA$ is closed under the functions $\psi$ and $s$. In fact, it follows from Remark~\ref{alsop} that $\Gamma\oplus\Q\alpha$ is also closed under $p$. Thus $(\Gamma\oplus\Q\alpha,\psi)$ is an $L_{\log}$-substructure of $\GAMMA$ which extends $\Gamma$ and hence also is a model of $T_{\log}$ since $T_{\log}$ has a universal axiomatization. In this case, $\Gamma\langle\alpha\rangle = (\Gamma\oplus\Q\alpha,\psi)$.

\medskip\noindent
The following observation illustrates how the inductive step in Theorem~\ref{simpleextensions} below will work:

\begin{observation}
\label{increasingobservation}
Suppose that $\Gamma_0\subseteq\Gamma_1\subseteq \M$ are models of $T_{\log}$ and that $\alpha\in\M\setminus\Gamma_1$. Then $T_{\Gamma_0}(\alpha)\subseteq T_{\Gamma_1}(\alpha)$. In particular, if $T_{\Gamma_0}(\alpha) = B_0\cup\{\beta_0\}$ as in case (3) of Corollary~\ref{tracetypes} above, and if $\Gamma_1 = \Gamma_0\langle \beta_0\rangle = \Gamma_0+\sum_{n}\Q s^n\beta_0+\sum_n\Q p^n\beta_0$ also has the property that $T_{\Gamma_1}(\alpha) = B_1\cup\{\beta_1\}$ as in case (3) of Corollary~\ref{tracetypes}, then it must be the case that $\beta_0\in B_1$ and thus $s^n\beta_0<\beta_1$ for all $n$.
\end{observation}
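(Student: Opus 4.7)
\medskip\noindent
The plan is to dispatch the general inclusion $T_{\Gamma_0}(\alpha)\subseteq T_{\Gamma_1}(\alpha)$ directly from the defining formulas, and then isolate the content of the ``in particular'' clause as a short case analysis against Corollary~\ref{tracetypes}. For the inclusion, every element of $T_{\Gamma_0}(\alpha)$ has the form $\psi(q\alpha-\gamma)$ or $s(q\alpha-\gamma)$ with $q\in\Q^{\neq}$ and $\gamma\in\Gamma_0$; since $\Gamma_0\subseteq\Gamma_1$, the very same expression also witnesses membership in $T_{\Gamma_1}(\alpha)$.

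\medskip\noindent
For the second assertion, I would begin from the observation that $\beta_0\in T_{\Gamma_0}(\alpha)\subseteq T_{\Gamma_1}(\alpha) = B_1\cup\{\beta_1\}$, so either $\beta_0\in B_1$ or $\beta_0 = \beta_1$. The key step is to rule out the latter, which I propose to do by showing $\beta_0\in\Psi_{\Gamma_1}$: because the case (3) hypothesis on $T_{\Gamma_1}(\alpha)$ gives $\beta_1\in\Psi_{\M}\setminus\Psi_{\Gamma_1}$, this immediately forces $\beta_0\neq\beta_1$. To see $\beta_0\in\Psi_{\Gamma_1}$, note that $s0\in\Psi_{\Gamma_0}$ while $\beta_0\in\Psi_{\M}\setminus\Psi_{\Gamma_0}$, so in particular $\beta_0>s0$. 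Hence in $\M$ the element $p\beta_0\in\Psi_{\M}$ is defined and satisfies $s(p\beta_0)=\beta_0$. The explicit description $\Gamma_1 = \Gamma_0+\sum_n\Q s^n\beta_0+\sum_n\Q p^n\beta_0$ puts $p\beta_0\in\Gamma_1$, and since $s$ is an $L_{\log}$-function symbol whose interpretation in every model of $T_{\log}$ maps the domain into the $\Psi$-set, we conclude $\beta_0 = s(p\beta_0)\in\Psi_{\Gamma_1}$.

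\medskip\noindent
Once $\beta_0\in B_1$ is in hand, the final claim $s^n\beta_0<\beta_1$ for all $n$ follows by a one-line induction from the closure condition $s(B_1)\subseteq B_1$ (built into case (3)) together with $B_1<\beta_1$. The only subtle point in the whole argument is the verification that $\beta_0\in\Psi_{\Gamma_1}$; everything else is either definitional or a mechanical consequence of the case (3) structure of $T_{\Gamma_1}(\alpha)$.
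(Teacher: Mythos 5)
Your proof is correct. The paper states this as an Observation without supplying a proof, so there is no official argument to compare against; your write-up fills in what the paper leaves implicit, and it does so soundly. The first inclusion is indeed immediate from $\Q^{\neq}\alpha-\Gamma_0\subseteq\Q^{\neq}\alpha-\Gamma_1$. For the ``in particular'' clause, you correctly identify that the whole content is ruling out $\beta_0=\beta_1$, which reduces to showing $\beta_0\in\Psi_{\Gamma_1}$, and your route via $\beta_0=s(p\beta_0)$ with $p\beta_0\in\Gamma_1$ and $s$ taking values in the $\Psi$-set is valid (note $\beta_0>s0$ does hold, since $s0$ is the least element of $\Psi_{\M}$, it lies in $\Psi_{\Gamma_0}$, and $\beta_0\notin\Psi_{\Gamma_0}$). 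A slightly more direct way to get the same thing: $T_{\log}$ is model complete, so $\Gamma_1\preceq\M$, and $\Psi$ is $0$-definable (even quantifier-free in $L_{\log}$, since $\gamma\in\Psi$ iff $\gamma=s(0)$ or $p(\gamma)\neq\infty$); hence $\Psi_{\Gamma_1}=\Psi_{\M}\cap\Gamma_1$, and $\beta_0\in\Psi_{\M}\cap\Gamma_0\langle\beta_0\rangle=\Psi_{\M}\cap\Gamma_1$ gives $\beta_0\in\Psi_{\Gamma_1}$ immediately. Either argument lands you at $\beta_0\in B_1$, after which the induction using $s(B_1)\subseteq B_1$ and $B_1<\beta_1$ is exactly right.
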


\begin{thm}
\label{simpleextensions}
Let $\alpha\in\GAMMA$. Then $\Gamma\langle\alpha\rangle$ is isomorphic over $\Gamma$ to one of the following:
\begin{enumerate}
\item $\Gamma_{\rho}$ for some increasing $\rho:n\to\sded(\Psi)\setminus\{\Psi\}$ and some $n$,
\item $\Gamma_{\rho}\oplus\Q\alpha$ for some increasing $\rho:n\to\sded(\Psi)\setminus\{\Psi\}$ and some $n$,
\item $\Gamma_{\rho}\oplus\Q\alpha$ for some increasing $\rho:\omega\to\sded(\Psi)\setminus\{\Psi\}$.
\end{enumerate}
\end{thm}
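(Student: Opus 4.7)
My plan is to approximate $\Gamma\langle\alpha\rangle$ from below by a canonical chain $\Gamma = \Gamma_0\subseteq\Gamma_1\subseteq\cdots\subseteq\GAMMA$ of $L_{\log}$-substructures, at each step adding the unique ``missing'' copy of $\Z$ to the current $\Psi$-set as dictated by Corollary~\ref{tracetypes}, together with an indexing $\rho(0),\rho(1),\ldots\in\sded(\Psi)\setminus\{\Psi\}$ of the positions of these copies in the \emph{original} $\Psi$. The inductive invariant is that $\Gamma_n\cong\Gamma_{\rho\upharpoonright n}$ over $\Gamma$. At stage $n$ I first check whether $\alpha\in\Gamma_n$: if so (necessarily $n\geq 1$), stop, yielding case~(1) with $\Gamma\langle\alpha\rangle = \Gamma_n\cong\Gamma_{\rho\upharpoonright n}$. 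Otherwise I apply Corollary~\ref{tracetypes} to $\alpha$ over $\Gamma_n$: if $T_{\Gamma_n}(\alpha)\subseteq\Psi_{\Gamma_n}$ (cases~(1)--(2) of that corollary), then $\Gamma_n\oplus\Q\alpha$ is already closed under $\psi$ and $s$, and also under $p$ by Remark~\ref{alsop}, yielding case~(2) with $\Gamma\langle\alpha\rangle = \Gamma_n\oplus\Q\alpha\cong\Gamma_{\rho\upharpoonright n}\oplus\Q\alpha$. Otherwise $T_{\Gamma_n}(\alpha) = B_n\cup\{\beta_n\}$ with $\beta_n\in\Psi_{\GAMMA}\setminus\Psi_{\Gamma_n}$; I set $\Gamma_{n+1}:=\Gamma_n\langle\beta_n\rangle$ and $\rho(n):=\Psi\setminus B_n$, and continue.

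To keep the invariant, first note that $B_n\cap\Psi$ is downward closed and $s$-closed in $\Psi$ (inherited from $B_n$), and nonempty since $s0\in B_n$, hence $\rho(n)\in\sded(\Psi)\setminus\{\Psi\}$. By Observation~\ref{increasingobservation}, the downward-closed set $B_n$ contains the entire $s$-orbit $\{s^k\beta_i:k\in\Z\}$ of each previously added $\beta_i$ ($i<n$), and $B_n\cap\Psi\subseteq B_{n+1}\cap\Psi$, so $\rho$ is increasing in $\sded(\Psi)$. Under the inductively constructed iso $\phi_n:\Gamma_n\to\Gamma_{\rho\upharpoonright n}$ (with $\phi_n(s^k\beta_i)=\beta_{k,i}$), the first fact forces $\phi_n(B_n) = \Psi_{\Gamma_{\rho\upharpoonright n}}\setminus\rho(n)$. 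Hence applying Lemma~\ref{Zinmiddle} to $\Gamma_n$ at $B_n$ defines $\Gamma_{n+1}$, which corresponds via $\phi_n$ to the successor-step application to $\Gamma_{\rho\upharpoonright n}$ at $\rho(n)$ from the proof of Lemma~\ref{rhoZ}, giving $\Gamma_{n+1}\cong\Gamma_{\rho\upharpoonright(n+1)}$ over $\Gamma$.

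If the process continues through every finite stage, set $\Gamma_\omega:=\bigcup_n\Gamma_n$; by the limit step of Lemma~\ref{rhoZ} we have $\Gamma_\omega\cong\Gamma_\rho$ over $\Gamma$ for the increasing $\rho:\omega\to\sded(\Psi)\setminus\{\Psi\}$ constructed above. Since $\alpha\notin\Gamma_n$ at every stage, $\alpha\notin\Gamma_\omega$. To conclude case~(3), it suffices to show $T_{\Gamma_\omega}(\alpha)\subseteq\Psi_{\Gamma_\omega}$: any purported $\beta\in T_{\Gamma_\omega}(\alpha)\setminus\Psi_{\Gamma_\omega}$ has the form $\psi(q\alpha-\gamma)$ or $s(q\alpha-\gamma)$ with $\gamma\in\Gamma_n$ for some finite $n$, and hence lies in $T_{\Gamma_n}(\alpha)\setminus\Psi_{\Gamma_n}=\{\beta_n\}$; but $\beta_n\in\Psi_{\Gamma_{n+1}}\subseteq\Psi_{\Gamma_\omega}$, a contradiction. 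Thus $\Gamma\langle\alpha\rangle=\Gamma_\omega\oplus\Q\alpha\cong\Gamma_\rho\oplus\Q\alpha$.

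I expect the main obstacle to be the bookkeeping between the internal $s$-cut $B_n\subseteq\Psi_{\Gamma_n}$ used by Lemma~\ref{Zinmiddle} at stage $n$ and the external $s$-cut $\rho(n)\subseteq\Psi$ required by Lemma~\ref{rhoZ}: verifying $\phi_n(B_n) = \Psi_{\Gamma_{\rho\upharpoonright n}}\setminus\rho(n)$ hinges on Observation~\ref{increasingobservation}, which ensures that each $B_n$ swallows all previously added $s$-orbits, so that the translation $\rho(n):=\Psi\setminus B_n$ genuinely captures the position of the new copy of $\Z$ in terms of the original $\Psi$.
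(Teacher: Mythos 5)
Your proof is correct and follows essentially the same approach as the paper's: a recursive chain $\Gamma = \Gamma_0 \subseteq \Gamma_1 \subseteq \cdots$ built by applying Corollary~\ref{tracetypes} at each stage, with Observation~\ref{increasingobservation} ensuring the $s$-cuts $\rho(n)$ are increasing and allowing the identification $\Gamma_n\cong\Gamma_{\rho\upharpoonright n}$, terminating in one of the three cases. Your explicit verification of closure at the limit stage ($T_{\Gamma_\omega}(\alpha)\subseteq\Psi_{\Gamma_\omega}$) and the bookkeeping $\rho(n)=\Psi\setminus B_n$ simply spell out details the paper leaves implicit.
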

\begin{proof}
We will recursively construct a sequence of extensions $\Gamma=:\Gamma_0\subseteq\Gamma_1\subseteq\Gamma_2\subseteq\cdots\subseteq\Gamma\langle\alpha\rangle$ of models of $T$  inside $\GAMMA$. This sequence will either be finite or have order type $\omega+1$ and the last element of the sequence will be $\Gamma\langle\alpha\rangle$.

We will inductively assume that each $\Gamma_n$ constructed so far is isomorphic to some $\Gamma_{\rho}$ for some increasing $\rho:n\to\sded(\Psi)\setminus\{\Psi\}$. This is true for $n=0$ since $\Gamma_0 = \Gamma = \Gamma_{\rho}$ for the empty increasing function $\rho:0\to\sded(\Psi)\setminus\{\Psi\}$. Given $\Gamma_n$ for $n<\omega$, if $\alpha\in\Gamma_n$, then we are done, i.e., $\Gamma\langle\alpha\rangle = \Gamma_n$ and so $\Gamma\langle\alpha\rangle\cong\Gamma_{\rho}$ for some increasing $\rho:n\to\sded(\Psi)\setminus\{\Psi\}$. 
Otherwise, consider the set $T_{\Gamma_n}(\alpha)$. If $T_{\Gamma_n}(\alpha)\subseteq\Psi_{\Gamma_n}$ then we set $\Gamma_{n+1}:=\Gamma_n\oplus\Q\alpha$ and we are done, i.e., $\Gamma\langle\alpha\rangle = \Gamma_{n+1}\cong\Gamma_{\rho}\oplus\Q\alpha$ for some increasing $\rho:n\to\sded(\Psi)\setminus\{\Psi\}$. 

Otherwise, we are in the case where $T_{\Gamma_n}(\alpha) = B\cup\{\beta\}$ where $B\subseteq\Psi_n$ is nonempty, downward closed and is such that $s(B)\subseteq B$ and $\beta\in\Psi_{\GAMMA}\setminus\Psi_n$ and $B<\beta<(\Psi_n\setminus B) = \Psi\setminus B$. In this case we set $\Gamma_{n+1}:=\Gamma_{n}\langle\beta\rangle$, i.e., we add to $\Gamma_n$ the element $\beta$, and with it, the entire copy of $\Z$ that $\beta$ lives on, so $\Gamma_{n+1} = \Gamma_n+\sum_n\Q p^n\beta+\sum_n\Q s^n\beta$. Thus $\Gamma_{n+1}\cong (\Gamma_n)_{(\Psi_n\setminus B)}$. By Observation~\ref{increasingobservation} we actually have $\Gamma_{n+1}\cong \Gamma_{\rho'}$ for some increasing $\rho':n+1\to\sded(\Psi)\setminus\{\Psi\}$. Now that we've constructed $\Gamma_{n+1}$, we keep going.

Note that we either terminate the construction at a finite $n$ or else $\bigcup_n\Gamma_n$ is isomorphic to $\Gamma_{\rho}$ inside $\Gamma\langle\alpha\rangle$ for some increasing $\rho:\omega\to\sded(\Psi)$, by Observation~\ref{increasingobservation}. In the latter case, we note that $\Gamma_{\omega}:=(\bigcup_n\Gamma_n)\oplus\Q\alpha$ (as an ordered abelian group) is automatically closed under $\psi$ and $s$ by construction and so we are done: $\Gamma\langle\alpha\rangle = \Gamma_{\omega}$ and so $\Gamma\langle\alpha\rangle\cong \Gamma_{\rho}\oplus\Q\alpha$ for some increasing $\rho:\omega\to\sded(\Psi)\setminus\{\Psi\}$.
\end{proof}

\section{Examples}
\label{examples}

\noindent
In this section, we give explicit examples of extensions of models of $T_{\log}$ which realize each type of simple extension in Theorem~\ref{simpleextensions}.

\medskip\noindent
First, we recall the useful notion of \emph{pseudocauchy sequences} and \emph{pseudolimits} from valuation theory, given here only in the special context of asymptotic couples with valuation map $\psi$:

\begin{definition}
\label{pcdef}
Let $(\Gamma,\psi)$ be an asymptotic couple and $\nu\neq 0$ a limit ordinal. A sequence $(\alpha_{\rho})_{\rho<\nu}$ in $\Gamma$ is a \textbf{pseudocauchy sequence}, or \textbf{pc-sequence}, in $(\Gamma,\psi)$ if for some index $\rho_0<\nu$ we have
\[
\rho_0<\rho<\sigma<\tau<\nu \Longrightarrow \psi(\alpha_{\rho}-\alpha_{\sigma})<\psi(\alpha_{\sigma}-\alpha_{\tau}).
\]
For $\alpha\in\Gamma$, the sequence $(\alpha_{\rho})_{\rho<\nu}$ in $\Gamma$ is said to \textbf{pseudoconverge to $\alpha$}, and $\alpha$ is a \textbf{pseudolimit of $(\alpha_{\rho})_{\rho<\nu}$} if for some index $\rho_0<\nu$ we have
\[
\rho_0<\rho<\sigma<\nu \Longrightarrow \psi(\alpha-\alpha_{\rho})<\psi(\alpha-\alpha_{\sigma}).
\]
\end{definition}

\noindent
The basic connection between pc-sequences and model theory is the following:

\begin{lemma}
\label{pclimit}
Let $(\Gamma,\psi)$ be an asymptotic couple, $\nu\neq 0$ a limit ordinal, and $(\alpha_{\rho})_{\rho<\nu}$ a pc-sequence in $\Gamma$. Then there is an elementary extension $(\Gamma^*,\psi^*)$ of $(\Gamma,\psi)$ and an element $\alpha\in\Gamma^*$ such that $(\alpha_{\rho})_{\rho<\nu}$ pseudoconverges to $\alpha$.
\end{lemma}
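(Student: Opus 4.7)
The plan is to reduce this to a standard compactness argument. Consider the partial $L_{AC}$-type $p(x)$ over $(\Gamma,\psi)$ consisting of the formulas
\[
\varphi_{\rho,\sigma}(x)\ :=\ \psi(x-\alpha_{\rho})<\psi(x-\alpha_{\sigma})
\]
for all pairs $\rho_0<\rho<\sigma<\nu$, where $\rho_0$ is the threshold index supplied by the pc-sequence condition. If this partial type is finitely satisfiable in $(\Gamma,\psi)$, then it is realized in some sufficiently saturated elementary extension $(\Gamma^*,\psi^*)$ of $(\Gamma,\psi)$, and any such realization $\alpha$ is by definition a pseudolimit of $(\alpha_{\rho})_{\rho<\nu}$.

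So the only real content is finite satisfiability. Given finitely many indices $\rho_0<\rho_1<\sigma_1,\ldots,\rho_n<\sigma_n<\nu$, choose $\tau<\nu$ strictly above all $\sigma_i$ and $\rho_i$; I claim $\alpha_{\tau}\in\Gamma$ witnesses $\varphi_{\rho_i,\sigma_i}$ for each $i$. Indeed, the pc-sequence property applied to $\rho_0<\rho_i<\sigma_i<\tau$ yields
\[
\psi(\alpha_{\rho_i}-\alpha_{\sigma_i})<\psi(\alpha_{\sigma_i}-\alpha_{\tau}).
\]
Since $\alpha_{\tau}-\alpha_{\rho_i}=(\alpha_{\tau}-\alpha_{\sigma_i})+(\alpha_{\sigma_i}-\alpha_{\rho_i})$ and $\psi(\alpha_{\sigma_i}-\alpha_{\rho_i})=\psi(\alpha_{\rho_i}-\alpha_{\sigma_i})$ by (AC2), Fact~\ref{valuationfact} gives $\psi(\alpha_{\tau}-\alpha_{\rho_i})=\psi(\alpha_{\rho_i}-\alpha_{\sigma_i})$, which is strictly less than $\psi(\alpha_{\sigma_i}-\alpha_{\tau})=\psi(\alpha_{\tau}-\alpha_{\sigma_i})$. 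Hence $\alpha_{\tau}$ realizes the specified finite fragment of $p(x)$.

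With finite satisfiability in hand, pass to a $|\nu|^{+}$-saturated elementary extension $(\Gamma^*,\psi^*)\succeq(\Gamma,\psi)$ and let $\alpha\in\Gamma^*$ realize $p(x)$; this $\alpha$ is the required pseudolimit. There is no real obstacle here: the one computation worth double-checking is the application of Fact~\ref{valuationfact} above, which depends only on (AC1), (AC2), and the defining pc-sequence inequality, and so works in full generality for asymptotic couples (no $H$-type or asymptotic integration required).
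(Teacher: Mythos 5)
Your proposal is correct and follows exactly the paper's argument: form the partial type $\{\psi(x-\alpha_{\rho})<\psi(x-\alpha_{\sigma}) : \rho_0<\rho<\sigma<\nu\}$, check finite satisfiability, and realize it in an elementary extension. The only difference is that you spell out the finite-satisfiability computation (correctly, via Fact~\ref{valuationfact} and (AC2)), which the paper leaves implicit.
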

\begin{proof}
Suppose $(\alpha_{\rho})_{\rho<\nu}$ is a pc-sequence in $\Gamma$, with $\nu\neq 0$ a limit ordinal. Let $\rho_0<\nu$ be as in Definition~\ref{pcdef}. Consider the partial type given by all formulas of the form
\[
\psi(x-\alpha_{\rho})<\psi(x-\alpha_{\sigma})
\]
for $\rho_0<\rho<\sigma$. Since every finite subset of this type is realized in $(\Gamma,\psi)$, this type will be realized by an element $\alpha$ in an elementary extension $(\Gamma^*,\psi^*)$ of $(\Gamma,\psi)$. It easily follows that $\alpha$ is a pseudolimit of the sequence $(\alpha_{\rho})_{\rho<\nu}$.
\end{proof}

\subsection{Example 1} Consider the $L_{\log}$-substructure $(\Gamma_{\log}^{\Q},\psi)$ of $(\Gamma_{\log},\psi)$ with underlying group $\Gamma_{\log}^{\Q} := \sum_n\Q e_n$. In~\cite{gehret} we showed that $(\Gamma_{\log}^{\Q},\psi)\models T_{\log}$, and in fact, $(\Gamma_{\log}^{\Q},\psi)$ is a prime model of $T_{\log}$. Let $\alpha$ be the element
\[
\alpha := \sqrt{2}e_2 = (0,0,\sqrt{2},0,\ldots)\in \Gamma_{\log}\setminus \Gamma_{\log}^{\Q}.
\]
An arbitrary element of $\Q^{\neq}\alpha-\Gamma^{\Q}_{\log}$ looks like
\[
(q_0,q_1,\underbrace{q_2+q\sqrt{2}}_{\neq 0,1},q_3,\ldots)
\]
where $q\in\Q^{\neq}$ and $q_n\in\Q$, where $q_n=0$ for all but finitely many $n$. Since the third entry $q_2+q\sqrt{2}$ can never be $0$ or $1$,
a computation using Example~\ref{functionformulas} shows that
\[
\psi(\Q^{\neq}\alpha-\Gamma_{\log}^{\Q}) = s(\Q^{\neq}\alpha-\Gamma_{\log}^{\Q}) = \{s0,s^20,s^30\}
\]
and thus
\[
T_{(\Gamma_{\log}^{\Q},\psi)}(\alpha) = \{s0, s^20, s^30\} = [e_0, e_0+e_1+e_2]_{\Psi_{(\Gamma_{\log}^{\Q},\psi)}} \subseteq \Psi_{(\Gamma_{\log}^{\Q},\psi)}.
\]
Therefore
\[
(\Gamma_{\log}^{\Q},\psi)\langle\alpha\rangle = (\Gamma_{\log}^{\Q}\oplus\Q\alpha,\psi)
\]
where the direct sum is taken inside $\Gamma_{\log}$ and $\psi$ is the restriction of the $\psi$-map of $(\Gamma_{\log},\psi)$. This is an example of (2) from Theorem~\ref{simpleextensions} and (1) from Corollary~\ref{tracetypes}.

\subsection{Example 2} 
The idea for this example is to adjoin the vector
\[
\left(1,\frac{1}{2},\frac{1}{3},\frac{1}{4},\frac{1}{5},\ldots\right)
\]
to the asymptotic couple $(\Gamma_{\log},\psi)$. This can be made precise using the notions of pc-sequences and pseudolimits as follows:

Consider the sequence $(\alpha_N)_{N<\omega}:=(\sum_{i=0}^N(1+i)^{-1}e_i)_{N<\omega}$ in $(\Gamma_{\log},\psi)$. If $N_0<N_1<\omega$, then
\begin{align*}
\alpha_{N_0}-\alpha_{N_1} &= -\textstyle\sum_{i=N_0+1}^{N_1}(1+i)^{-1}e_i,\quad\text{and thus:} \\
\stepcounter{equation}\tag{\theequation}\label{pcseqcomp} \psi(\alpha_{N_0}-\alpha_{N_1}) &= \textstyle\sum_{i=0}^{N_0+1}e_i = s^{N_0+1}0,\quad\text{for all $N_0<N_1<\omega$}
\end{align*}
This shows that $(\alpha_N)_{N<\omega}$ is a pc-sequence in $(\Gamma_{\log},\psi)$. By Lemma~\ref{pclimit}, we get an elementary extension $(\Gamma^*,\psi^*)$ of $(\Gamma_{\log},\psi)$ and an element $\alpha\in\Gamma^*$ such that $\alpha$ is a pseudolimit of $(\alpha_N)_{N<\omega}$. 
In some sense $\alpha$ can be thought of as the vector above, especially when it comes to doing calculations.
It follows from (\ref{pcseqcomp}) and the definition of pseudolimit that
\[
\stepcounter{equation}\tag{\theequation}\label{pclimitcomp} \psi(\alpha-\alpha_N) = s^{N+1}0,\quad\text{for all $N<\omega$.}
\]

Let $\gamma = \sum_nq_ne_n\in\Gamma_{\log}$ be arbitrary, where $q_n\in\Q$ for all $n$. Then take the unique $N<\omega$ such that $q_n = (1+n)^{-1}$ iff $n<N$. Next let $M<\omega$ be arbitrary and note that
\begin{align*}
\psi(\gamma - \alpha_{N+M}) &= \psi\left(\textstyle\sum_n q_ne_n - \sum_{n=0}^{M+N}(1+n)^{-1}e_n\right)\\
 &= \psi\left(\textstyle\sum_{n\geq N}q_ne_n - \textstyle\sum_{n=N}^{N+M}(1+n)^{-1}e_n\right) \\
 &= \psi\bigg(\underbrace{(q_N-(1+N)^{-1})}_{\neq 0}e_N + \textstyle\sum_{n>N}q_n^*e_n\bigg) \quad\text{(for some $q_n^*\in\Q$)} \\
 &= \textstyle\sum_{n=0}^Ne_n = s^N0.
\end{align*}
In light of (\ref{pclimitcomp}), this computation shows that $\alpha\in\Gamma^*\setminus\Gamma_{\log}$. Using Fact~\ref{valuationfact} and the definition of pseudolimit, the above computation also shows that
\[
\psi(\Q^{\neq}\alpha-\Gamma_{\log}) = \Psi_{\Gamma_{\log}}.
\]

To compute $s(\Q^{\neq}\alpha-\Gamma_{\log})$, let $q\in\Q^{\neq}$ and $\gamma = \sum_nq_ne_n\in\Gamma_{\log}$ be arbitrary. Take the unique $N<\omega$ such that $q_n = q(1+n)^{-1}-1$ iff $n<N$. Then we have
\[
q\alpha_{N+1}-\gamma = e_0+\cdots+e_{N-1} + \underbrace{(q(1+N)^{-1}-q_N)}_{\neq 1}e_N + \sum_{n>N}q_n^*e_n\quad\text{(for some $q_n^*\in\Q$)}
\]
and thus $s(q\alpha_{N+1} - \gamma) = s^N0$. Furthermore, (\ref{pclimitcomp}) implies that
\[
[q\alpha-q\alpha_{N+1}]<[e_N].
\]
Thus, with $\tilde{q}:= |1-q(1+N)^{-1}+q_N|/2\in\Q^{>}$, we have that
\[
q\alpha_{N+1}-\gamma - \tilde{q}e_N < q\alpha-\gamma = (q\alpha-q\alpha_{N+1}) + (q\alpha_{N+1}-\gamma) < q\alpha_{N+1}-\gamma + \tilde{q}e_N,
\]
with all three quantities contained either entirely within $((\Gamma^*)^{<})'$ or entirely within $((\Gamma^*)^{>})'$. Thus by (4) and (5) of Lemma~\ref{functionproperties}, it follows that $s(q\alpha - \gamma) = s^N0$. This computation shows that
\[
s(\Q^{\neq}\alpha-\Gamma_{\log}) = \Psi_{\Gamma_{\log}}.
\]

We conclude that
\[
T_{(\Gamma_{\log},\psi)}(\alpha) = \Psi_{\Gamma_{\log}}
\]
and so
\[
(\Gamma_{\log},\psi)\langle\alpha\rangle = (\Gamma_{\log}\oplus \Q\alpha,\psi)
\]
where the direct sum is being taken in $(\Gamma^*,\psi^*)$ and $\psi$ is the restriction of the $\psi$-map of $(\Gamma^*,\psi^*)$. This is an example of (2) from Theorem~\ref{simpleextensions} and (2) from Corollary~\ref{tracetypes}.

\subsection{Example 3} In this example, we let $(\Gamma,\psi)$ be an arbitrary model of $T_{\log}$ and we fix an extension $(\Gamma_{\rho},\psi_{\rho})$ for some increasing $\rho:n\to\sded(\Psi)$ for some $n\geq 1$. Consider an element $\alpha\in \Gamma_{\rho}$ such that
\[
\alpha := \gamma + \sum_{j=0}^{n-1}\alpha_j
\]
where $\gamma\in\Gamma$ and $\alpha_j\in (\operatorname{span}_{\Q}(\beta_{k,j})_{k\in\Z})^{\neq}$, i.e., each $\alpha_j$ is constructed from a nontrivial linear combination of $\beta_{k,j}$'s from the $j$th copy of $\Z$ that was added to $\Gamma$ in $\Gamma_{\rho}$. We will show that $\alpha$ has the property that $\Gamma\langle\alpha\rangle = \Gamma_{\rho}$, and so it is in some sense a ``primitive element'' for the extension $\Gamma_{\rho}$ of $\Gamma$.

First, since $\Gamma\langle\alpha\rangle = \Gamma\langle \alpha-\gamma\rangle$, we may replace $\alpha$ with $\alpha-\gamma$. Thus $\alpha = \sum_{j=0}^{n-1}\alpha_j$. By the $\Q$-linear independence of the $(\beta_{k,j})_{k\in\Z, j<n}$ (see Lemma 6.8 of~\cite{gehret}), we may uniquely write $\alpha = \sum_{l=0}^Nq_l\beta_l$ for some $N>0$,  with $q_0,\ldots,q_N\in\Q^{\neq}$ and $(\beta_l)_{l\leq N}\subseteq (\beta_{k,j})_{k\in\Z, j<n}$ are such that $\beta_0<\cdots<\beta_{N}$.

Next, if $\sum_{l=0}^Nq_l =0$, then $\psi(\alpha) = s\beta_0\in \Gamma\langle\alpha\rangle$, otherwise $s((\sum_{l=0}^Nq_l)^{-1}\alpha) = s\beta_0$ (by Lemma~\ref{psisformulas}). Thus $(s^k\beta_0)_{k\in\Z}\subseteq\Gamma\langle\alpha\rangle$ and $\alpha-q_0\beta_0 = \sum_{l=1}^Nq_l\beta_l\in \Gamma\langle\alpha\rangle$. In this way, we have ``stripped off'' the least $\beta_{k,j}$ in $\alpha$ and we have recovered the first copy of $\Z$ in the construction of $\Gamma_{\rho}$. Continuing in this manner we can recover all the other copies of $\Z$.

It is also clear that all such ``primitive elements'' of $\Gamma_{\rho}$ must take this form. This simple extension is an example of (1) in Theorem~\ref{simpleextensions}.

%

\subsection{Example 4}
Finally we give an example of a simple extension of type (3) from Theorem~\ref{simpleextensions}. Let $(\Gamma,\psi)$ be an arbitrary model of $T_{\log}$ and we fix an extension $(\Gamma_{\rho},\psi_{\rho})$ for some increasing $\rho:\omega\to \sded(\Psi)$ inside $\M$. Let $(\beta_{k,j})_{k\in\Z,j<\omega}$ be the elements from the copies of $\Z$'s that were added to $\Gamma$ in $\Gamma_{\rho}$.

Next define the element $\alpha_n:=\sum_{j=0}^n \beta_{1,j}-\beta_{0,j}\in \Gamma_{\rho\upharpoonright (n+1)}\subseteq\Gamma_{\rho}\subseteq\M$. Note that from Example 3 above we have $\Gamma\langle\alpha_n\rangle = \Gamma_{\rho\upharpoonright(n+1)}$. Also note that by Lemma~\ref{psisformulas}, (1), we have that
\[
\stepcounter{equation}\tag{\theequation}\label{pcseqcomp2}\psi(\alpha_n-\alpha_m) = \psi\left(\textstyle\sum_{j=m+1}^n\beta_{1,j}-\beta_{0,j}\right) = s(\beta_{0,m+1}) = \beta_{1,m+1},\quad\text{for all $m<n<\omega$,}
\]
and so the sequence $(\alpha_n)_{n<\omega}$ is a pc-sequence. By saturation of $\M$, we can take an element $\alpha$ that is a pseudolimit of $(\alpha_n)$.

We claim that $\Gamma\langle\alpha\rangle$ is of the form $\Gamma_{\rho}\oplus\Q\alpha$. First, note that by (\ref{pcseqcomp2}) and the definition of pseudolimit,  it follows that
\[
\stepcounter{equation}\tag{\theequation}\label{pclimitcomp2} \psi(\alpha-\alpha_n) = \beta_{1,n+1},\quad\text{for all $n<\omega$.}
\]
Thus by Fact~\ref{valuationfact}, (\ref{pclimitcomp2}), and Lemma~\ref{psisformulas}, (1), we get
\[
\psi(\alpha) = \psi((\alpha-\alpha_0)+\alpha_0) = \min(\psi(\alpha-\alpha_0),\psi(\alpha_0)) =  \min(\beta_{1,1}, \beta_{1,0}) = \beta_{1,0}.
\]
From this it is clear that in fact $\alpha_0 = \beta_{1,0} - \beta_{0,0} = \beta_{1,0}-p\beta_{1,0}\in\Gamma_{\rho\upharpoonright1}\subseteq\Gamma\langle\alpha\rangle$. In general, if we show that $\alpha_0,\ldots,\alpha_m\in \Gamma_{\rho\upharpoonright (m+1)}\subseteq\Gamma\langle\alpha\rangle$, then we may consider the pc-sequence $(\alpha_n - \sum_{j=0}^m\alpha_m)_{n\geq m+1}$ which pseudoconverges to $\alpha - \sum_{j=0}^m\alpha_m$ in $\Gamma\langle\alpha\rangle$. Then we can recover $\beta_{1,m+1}$ and thus also $\alpha_{m+1}$ similar to above by computing $\psi(\alpha - \sum_{j=0}^m\alpha_m)$. 

Thus we have shown $\Gamma_{\rho}\subseteq\Gamma\langle\alpha\rangle$, from which it follows from the proof of Theorem~\ref{simpleextensions} that in fact $\Gamma\langle\alpha\rangle = \Gamma_{\rho}\oplus\Q\alpha$.

\section{Counting Types in $T_{\log}$}
\label{countingtypes}
\noindent
In this section, we derive a consequence of Theorem~\ref{simpleextensions} necessary for proving NIP for $T_{\log}$ in Section~\ref{NIPsettheory} below:
\begin{cor}
\label{countingtypescor}
If $(\Gamma,\psi)\models T_{\log}$, then
$
|S^1(\Gamma)| \leq \ded(|\Gamma|)^{\aleph_0}.
$
\end{cor}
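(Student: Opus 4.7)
The plan is to use quantifier elimination for $T_{\log}$ together with Theorem~\ref{simpleextensions}. By QE, $\tp(\alpha/\Gamma)$ is determined by the isomorphism class over $\Gamma$ of the pair $(\Gamma\langle\alpha\rangle,\alpha)$, so it suffices to bound the number of these, and Theorem~\ref{simpleextensions} exhibits each $\Gamma\langle\alpha\rangle$ as $\Gamma_\rho$ or $\Gamma_\rho\oplus\Q\alpha$ for an increasing $\rho\colon\nu\to\sded(\Psi)\setminus\{\Psi\}$ with $\nu\le\omega$.

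The first step is to bound $|\sded(\Psi)|\le\ded(|\Gamma|)$ by observing that any $s$-cut $B\subseteq\Psi$ is a union of $\sim_s$-classes: if $\alpha\sim_s\beta$ with $\alpha\in B$ and $\beta\in\Psi\setminus B$, then $\beta<\alpha$ by upward closure, so $s^n\beta\in\Psi\setminus B$ for all $n$ by the $s$-closure of $\Psi\setminus B$, contradicting $\alpha\le s^n\beta$ for large $n$. Thus $\sded(\Psi)$ is in bijection with the Dedekind cuts of the linear order $\Psi/{\sim_s}$ (of size $\le|\Gamma|$), and by definition of $\ded$ there are at most $\ded(|\Gamma|)$ of these. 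It follows that the number of finite increasing $\rho$ is at most $\ded(|\Gamma|)$ and the number of $\omega$-length ones is at most $\ded(|\Gamma|)^{\aleph_0}$. In case (1) of the theorem, $\alpha\in\Gamma_\rho$ contributes at most $|\Gamma_\rho|=|\Gamma|$ types per $\rho$, so case (1) contributes $\le\ded(|\Gamma|)^{\aleph_0}$ overall.

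The main obstacle is cases (2) and (3): for each $\rho$, I need to count the $L_{\log}$-types of $\alpha$ over $\Gamma_\rho$ with $T_{\Gamma_\rho}(\alpha)\subseteq\Psi_{\Gamma_\rho}$. By QE this type is determined by its qf-type, which via (AC2) and Remark~\ref{alsop} reduces to the Dedekind cut of $\alpha$ in $\Gamma_\rho$ together with the function $f\colon\gamma\mapsto\psi(\alpha-\gamma)$ from $\Gamma_\rho$ into $\Psi_{\Gamma_\rho}$. By Lemma~\ref{downward} the image of $f$ is downward closed in $\Psi_{\Gamma_\rho}$ (so has $\le\ded(|\Gamma|)$ choices), and Fact~\ref{valuationfact} gives the rigid constraint $f(\gamma_1)\ne f(\gamma_2)\Rightarrow \min(f(\gamma_1),f(\gamma_2))=\psi(\gamma_1-\gamma_2)$. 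Combined with the cut of $\alpha$, I expect these data to determine $f$---possibly via an explicit back-and-forth argument in the style of Lemma~\ref{Zinmiddle}, matching up candidates $\alpha,\alpha'$ with identical cut and image data via a $\Gamma_\rho$-isomorphism $\Gamma_\rho\langle\alpha\rangle\to\Gamma_\rho\langle\alpha'\rangle$. Granting this, each $\rho$ contributes at most $\ded(|\Gamma|)^2=\ded(|\Gamma|)$ types in cases (2) and (3), and summing yields $|S^1(\Gamma)|\le\ded(|\Gamma|)^{\aleph_0}\cdot\ded(|\Gamma|)=\ded(|\Gamma|)^{\aleph_0}$.
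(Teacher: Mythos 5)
Your overall scheme matches the paper's: bound $|\sded(\Psi)|$ by $\ded(|\Gamma|)$, run through the three cases of Theorem~\ref{simpleextensions}, and for each increasing $\rho$ bound the number of types of $\alpha$ over (the image of) $\Gamma_\rho$ with $\Gamma_\rho\langle\alpha\rangle = \Gamma_\rho\oplus\Q\alpha$. But the step you flag with ``Granting this'' is precisely where the content of the paper's proof lies, and you cannot simply grant it. The paper opens Section~\ref{countingtypes} with Tournant Dangereux~\ref{tournantdangereux}, which exhibits $\alpha,\beta$ realizing the same cut over $\Gamma$, both landing in a new archimedean class between adjacent $\Gamma$-classes, yet with $\psi(\alpha)=\delta$ and $\psi(\beta)=s\delta$. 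That warns that the cut alone fails; your repair --- adjoining the image of $f\colon\gamma\mapsto\psi(\alpha-\gamma)$ --- does separate that particular pair (the images are $\Psi^{\le\delta}$ and $\Psi^{\le s\delta}$), so the proposed invariant is plausible, but you still owe a proof that cut plus image determines the whole function $f$, and from it the $s$- and $p$-values, in every case.

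The paper supplies that proof via Lemma~\ref{simplecountingtypes}, which splits on whether $[\Gamma\oplus\Q\alpha]=[\Gamma]$ and invokes two embedding lemmas from Aschenbrenner--van den Dries--van der Hoeven: Lemma~\ref{archclassesembedding} (yielding Corollary~\ref{typedeterminedbycut}: if no new archimedean class appears, the cut alone determines the asymptotic-couple isomorphism type) and Lemma~\ref{cutembeddinglemma} (yielding Corollary~\ref{typedeterminedbyseveralthings}: if a new class appears, the archimedean-class cut together with the single $\psi$-value determines it). Your ``cut $+$ image of $f$'' is essentially a repackaging of the invariant appearing in those two corollaries (the image has a maximum precisely when a new archimedean class appears, and that maximum is the missing $\psi$-value), so your route is not wrong --- but without carrying out the back-and-forth you gesture at, or appealing to something like Corollaries~\ref{typedeterminedbycut} and~\ref{typedeterminedbyseveralthings}, the proof has a genuine gap at its central step.
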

\medskip\noindent
Under the assumptions of Section~\ref{sectionsimpleextensions}, it follows from the quantifier elimination for $T_{\log}$ that two elements $\alpha,\beta\in\M\setminus\Gamma$ have the same type over $\Gamma$ iff $\alpha$ and $\beta$ have the same isomorphism type over $\Gamma$, i.e., iff there is an isomorphism $\Gamma\langle\alpha\rangle\cong\Gamma\langle\beta\rangle$ over $\Gamma$ which sends $\alpha$ to $\beta$. This is how Corollary~\ref{countingtypescor} will follow from Theorem~\ref{simpleextensions}. However first we must be aware of the following:

\begin{tournantdangereux}
\label{tournantdangereux}
Suppose $\alpha,\beta\in\M\setminus\Gamma$ have the property that $\Gamma\langle\alpha\rangle = \Gamma\oplus\Q\alpha$ and $\Gamma\langle\beta\rangle = \Gamma\oplus\Q\beta$, which is a special case of (2) from Theorem~\ref{simpleextensions}. In this simplest of cases, it may be tempting to conclude that $\alpha$ and $\beta$ realize the same type over $\Gamma$ \emph{if and only if $\alpha$ and $\beta$ realize the same cut over $\Gamma$}. However, this is \emph{not} true in general. Consider the following scenario: Let $\delta,s\delta\in\Psi = \Psi_{\Gamma\langle\alpha\rangle} = \Psi_{\Gamma\langle\beta\rangle}$ be two adjacent members of the common $\Psi$-set. Consider the following sets of archimedean classes of $\Gamma$:
\[
C_0 := \{[\gamma]: \gamma\in\Gamma\text{ and } \psi(\gamma) = s\delta\}< C_1 := \{[\gamma]: \gamma\in\Gamma\text{ and } \psi(\gamma) = \delta\}.
\]
It could be the case that both $\alpha,\beta>0$ and $C_0<[\alpha],[\beta]<C_1$, which would guarantee that they realize the same cut over $\Gamma$. However, its possible that $\psi(\alpha) = \delta$ whereas $\psi(\beta) = s\delta$ and in this case $\alpha$ and $\beta$ wouldn't realize the same type over $\Gamma$. To account for this phenomenon, we need to take a small detour.
\end{tournantdangereux}

\subsection{Two More Embedding Lemmas: A Detour}
\noindent
\emph{In this subsection $(\Gamma,\psi)$ is a divisible $H$-asymptotic couple.} Here we recall two additional embedding lemmas for $H$-asymptotic couples which will help us deal with the issue raised in~\ref{tournantdangereux} above. The first is~\cite[Lemma 9.8.1]{mt}:

\begin{lemma}
\label{archclassesembedding}
Let $i:\Gamma\to G$ be an embedding of ordered abelian groups inducing a bijection $[\Gamma]\to [G]$. Then there is a unique function $\psi_G:G^{\neq}\to G$ such that $(G,\psi_G)$ is an $H$-asymptotic couple and $i:(\Gamma,\psi)\to (G,\psi_G)$ is an embedding.
\end{lemma}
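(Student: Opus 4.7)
My plan is to define $\psi_G$ by transferring $\psi$ along the bijection $[\Gamma]\to[G]$, after first establishing that $\psi$ is constant on archimedean classes. The preliminary observation is that in any $H$-asymptotic couple, $[\alpha]=[\beta]$ implies $\psi(\alpha)=\psi(\beta)$ for $\alpha,\beta\in\Gamma^{\neq}$: without loss of generality $\alpha,\beta>0$, so there exist $m,n\geq 1$ with $\alpha\leq n\beta$ and $\beta\leq m\alpha$, and (HC) combined with (AC2) gives $\psi(\alpha)\geq\psi(n\beta)=\psi(\beta)$ and symmetrically $\psi(\beta)\geq\psi(\alpha)$. Thus $\psi$ factors through $[\Gamma^{\neq}]$.

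With this in hand, I would define $\psi_G\colon G^{\neq}\to G$ by $\psi_G(g):=i(\psi(\gamma))$ for any $\gamma\in\Gamma^{\neq}$ with $[i(\gamma)]=[g]$; such $\gamma$ exists by the bijection and $\psi_G(g)$ is independent of the representative by the opening observation. Note that $\psi_G$ takes values in $i(\Gamma)\subseteq G$, and $\psi_G(i(\gamma))=i(\psi(\gamma))$ by construction, so $i$ will automatically be an embedding of asymptotic couples.

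Verifying (AC1), (AC2), and (HC) for $(G,\psi_G)$ should be routine. (AC2) is immediate since $[rg]=[g]$ for $r\in\Z^{\neq}$. For (HC), $0<g_1\leq g_2$ implies $[g_1]\leq[g_2]$, and choosing positive representatives in $\Gamma$ via the bijection and applying (HC) in $\Gamma$ gives $\psi(\gamma_1)\geq\psi(\gamma_2)$, which transfers under $i$. (AC1) follows similarly from the standard inequality $[g_1+g_2]\leq\max([g_1],[g_2])$ together with (HC).

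The main obstacle will be (AC3). Given $g_1>0$ and $g_2\neq 0$, I pick $\gamma_1\in\Gamma^{>}$ with $[i(\gamma_1)]=[g_1]$ and $\gamma_2\in\Gamma^{\neq}$ with $[i(\gamma_2)]=[g_2]$. Because $[g_1]=[i(\gamma_1)]$ in $G$, there is $m\geq 1$ with $i(\gamma_1)\leq mg_1$. Exploiting divisibility of $\Gamma$, I have $\gamma_1/m\in\Gamma^{>}$, so $i(\gamma_1/m)\leq g_1$. Applying (AC3) in $\Gamma$ to $\gamma_1/m>0$ and $\gamma_2$, combined with (AC2), yields $\gamma_1/m>\psi(\gamma_2)-\psi(\gamma_1)$, hence $g_1\geq i(\gamma_1/m)>i(\psi(\gamma_2)-\psi(\gamma_1))=\psi_G(g_2)-\psi_G(g_1)$, which is (AC3). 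For uniqueness, any alternative $\psi'_G$ making $(G,\psi'_G)$ an $H$-asymptotic couple with $i$ an embedding must agree with $i\circ\psi$ on $i(\Gamma^{\neq})$ and must be constant on archimedean classes by the opening observation applied to $(G,\psi'_G)$; since every archimedean class of $G^{\neq}$ meets $i(\Gamma^{\neq})$ by the bijection, $\psi'_G$ is forced to equal $\psi_G$.
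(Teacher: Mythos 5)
Your proposal is correct and follows exactly the paper's route: the paper's proof consists precisely of the definition $\psi_G(g):=i(\psi(\gamma))$ for $[g]=[i(\gamma)]$ (with the axiom checks deferred to the cited reference), and your verification of well-definedness, (AC1), (AC2), (HC), (AC3), and uniqueness fills in those routine details correctly.
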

\begin{proof}
The unique $\psi_G:G^{\neq}\to G$ is defined by $\psi_G(g):= i(\psi(\gamma))$ for $g\in G^{\neq}$ and $\gamma\in\Gamma^{\neq}$ with $[g] = [i(\gamma)]$.
\end{proof}

\begin{cor}
\label{typedeterminedbycut}
Suppose $(\Gamma\oplus\Q\alpha,\psi^{\alpha})$ and $(\Gamma\oplus\Q\beta,\psi^{\beta})$ are two $H$-asymptotic couple extensions of $(\Gamma,\psi)$ such that
\begin{enumerate}
\item $[\Gamma\oplus\Q\alpha] = [\Gamma]$, and
\item $\alpha$ and $\beta$ realize the same cut over $\Gamma$.
\end{enumerate}
Then the isomorphism $i:\Gamma\oplus\Q\alpha\to\Gamma\oplus\Q\beta$ of ordered abelian groups over $\Gamma$ which sends $\alpha$ to $\beta$ is also an isomorphism of $i:(\Gamma\oplus\Q\alpha,\psi^{\alpha})\to (\Gamma\oplus\Q\beta,\psi^{\beta})$ of asymptotic couples over $(\Gamma,\psi)$.
\end{cor}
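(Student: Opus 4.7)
The strategy is to invoke Lemma~\ref{archclassesembedding} together with a pullback/uniqueness argument. Excluding the trivial case where $\alpha\in\Gamma$, I may assume $\alpha,\beta\notin\Gamma$. First I would verify that the $\Q$-linear map $i$ defined by $\gamma+q\alpha\mapsto\gamma+q\beta$ (for $\gamma\in\Gamma$, $q\in\Q$) is an isomorphism of ordered abelian groups over $\Gamma$. This is where hypothesis (2) enters: for $q\neq 0$, the inequality $\gamma+q\alpha>0$ is equivalent to $\alpha$ lying above (for $q>0$) or below (for $q<0$) the element $-\gamma/q\in\Gamma$ (using divisibility of $\Gamma$), and since $\beta$ realizes the same cut over $\Gamma$ as $\alpha$, the corresponding inequality holds with $\beta$ in place of $\alpha$. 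So $i$ preserves signs, hence the order.

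Next I would pull back $\psi^{\beta}$ along $i$: define $\tilde{\psi}:(\Gamma\oplus\Q\alpha)^{\neq}\to\Gamma\oplus\Q\alpha$ by $\tilde{\psi}(x):=i^{-1}(\psi^{\beta}(i(x)))$. Since $i$ is an ordered group isomorphism, the pair $(\Gamma\oplus\Q\alpha,\tilde{\psi})$ is automatically an $H$-asymptotic couple, as the axioms (AC1)--(AC3) and (HC) transport along $i$ from $(\Gamma\oplus\Q\beta,\psi^{\beta})$. Because $i$ fixes $\Gamma$ pointwise and $\psi^{\beta}$ extends $\psi$, one checks directly that $\tilde{\psi}(\gamma)=\psi(\gamma)$ for $\gamma\in\Gamma^{\neq}$, so $\tilde{\psi}$ also extends $\psi$. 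Thus $(\Gamma\oplus\Q\alpha,\tilde{\psi})$ is a second $H$-asymptotic couple extension of $(\Gamma,\psi)$ on the same underlying ordered group.

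Finally I would invoke hypothesis (1): the inclusion $\Gamma\hookrightarrow\Gamma\oplus\Q\alpha$ induces a bijection on archimedean classes, so by the uniqueness half of Lemma~\ref{archclassesembedding} there is at most one $\psi$-map on $\Gamma\oplus\Q\alpha$ turning the inclusion into an embedding of $H$-asymptotic couples. Both $\psi^{\alpha}$ and $\tilde{\psi}$ are such maps, so they coincide. Unwinding the definition of $\tilde{\psi}$ yields $i(\psi^{\alpha}(x))=\psi^{\beta}(i(x))$ for all $x\in(\Gamma\oplus\Q\alpha)^{\neq}$, which is precisely the statement that $i$ is an isomorphism of $H$-asymptotic couples over $(\Gamma,\psi)$.

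I do not anticipate a serious obstacle: most of the real content is packaged into Lemma~\ref{archclassesembedding}. The only point requiring any care is the first step, where one must use the cut hypothesis to promote the tautological $\Q$-linear map $i$ to an order isomorphism, but this is the standard argument for simple ordered-group extensions.
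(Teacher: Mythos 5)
Your proof is correct and rests on the same key ingredient as the paper's, namely Lemma~\ref{archclassesembedding}. The paper applies the explicit formula $\psi_G(g)=i(\psi(\gamma))$ for $[g]=[i(\gamma)]$ directly to both $\psi^{\alpha}$ and $\psi^{\beta}$, checking $i\circ\psi^{\alpha}=\psi^{\beta}\circ i$ by an elementwise computation: given $\gamma_0+q\alpha$, pick $\gamma_1\in\Gamma^{\neq}$ with $[\gamma_0+q\alpha]=[\gamma_1]$ (possible by hypothesis (1)), observe that hypothesis (2) forces $[\gamma_0+q\beta]=[\gamma_1]$ as well, and conclude $\psi^{\alpha}(\gamma_0+q\alpha)=\psi(\gamma_1)=\psi^{\beta}(\gamma_0+q\beta)$. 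You instead transport $\psi^{\beta}$ back along $i$ to get a competing $H$-asymptotic couple structure $\tilde{\psi}$ on $\Gamma\oplus\Q\alpha$ and then invoke the uniqueness half of Lemma~\ref{archclassesembedding}. This is a clean repackaging of the same argument: the content carried by the uniqueness clause in the lemma is precisely the elementwise computation the paper performs. Either version works; yours is marginally more conceptual while the paper's is more explicit and shows along the way the intermediate fact (used elsewhere in the paper) that hypothesis (2) upgrades hypothesis (1) to the symmetric statement $[\Gamma\oplus\Q\beta]=[\Gamma]$, which you leave implicit in the step "the axioms transport along $i$."
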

\begin{proof}
By (1) we have that $\psi^{\alpha}((\Gamma\oplus\Q\alpha)^{\neq}) = \Psi$ and by (2) that $[\Gamma\oplus\Q\beta] = [\Gamma]$. Given $\gamma_0+q\alpha\in\Gamma\oplus\Q\alpha^{\neq}$, let $\gamma_1\in\Gamma^{\neq}$ be such that $[\gamma_0+q\alpha] = [\gamma_1]$. It follows from condition (2) that $[\gamma_1] = [\gamma_0+q\beta]$. Thus $i(\psi^{\alpha}(\gamma_0+i\alpha)) = \psi^{\alpha}(\gamma_0+i\alpha) = \psi(\gamma_1) = \psi^{\beta}(\gamma_0+q\beta) = \psi^{\beta}(i(\gamma_0+q\alpha))$, using Lemma~\ref{archclassesembedding} for $(\Gamma\oplus\Q\beta,\psi^{\beta})$.
\end{proof}

\medskip\noindent
The second embedding lemma is a divisible variant of~\cite[Lemma 9.8.7]{mt}:
\begin{lemma}
\label{cutembeddinglemma}
Let $(C_0,C_1)$ be a cut in $[\Gamma^{\neq}]$ and let $\beta\in \Gamma$ be such that $\beta<(\Gamma^{>})', \gamma^{\dagger}\leq\beta$ for all $\gamma\in\Gamma^{\neq}$ with $[\gamma]\in C_1$, and $\beta\leq\delta^{\dagger}$ for all $\delta\in\Gamma^{\neq}$ with $[\delta]\in C_0$. Then there exists an H-asymptotic couple $(\Gamma\oplus\Q\alpha,\psi^{\alpha})$ extending $(\Gamma,\psi)$, with $\alpha>0$, such that:
\begin{enumerate}
\item $[\alpha]$ realizes the cut $(C_0,C_1)$ in $[\Gamma^{\neq}]$, and $\psi^{\alpha}(\alpha) = \beta$;
\item given any embedding $i$ of $(\Gamma,\psi)$ into a divisible $H$-asymptotic couple $(\Gamma_1,\psi_1)$ and any element $\alpha_1\in\Gamma_1^{>}$ such that $[\alpha_1]$ realizes the cut $(\{[i(\delta)]:[\delta]\in C_0\}, \{[i(\delta)]:[\delta]\in C_1\})$ in $[i(\Gamma^{\neq})]$ and $\psi_1(\alpha_1) = \beta$, there is a unique extension of $i$ to an embedding $j:(\Gamma\oplus\Q\alpha,\psi^{\alpha})\to (\Gamma_1,\psi_1)$ with $j(\alpha) = \alpha_1$.
\end{enumerate}
\end{lemma}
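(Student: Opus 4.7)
The plan is to build $(\Gamma \oplus \Q\alpha, \psi^\alpha)$ explicitly, adapting the strategy behind Lemma~\ref{archclassesembedding} to the setting where a single new archimedean class $[\alpha]$ is introduced realizing the cut $(C_0, C_1)$. Set $G := \Gamma \oplus \Q\alpha$ as a formal direct sum of $\Q$-vector spaces, declare $\alpha > 0$, and extend the ordering from $\Gamma$ by: for $q \in \Q^{\neq}$, the sign of $\gamma + q\alpha$ agrees with the sign of $\gamma$ when $[\gamma] \in C_1$, and with the sign of $q$ when $[\gamma] \in C_0 \cup \{[0]\}$. A routine verification shows this yields a compatible total order on $G$ with $[G] = [\Gamma^{\neq}] \cup \{[\alpha]\}$ and $[\alpha]$ realizing $(C_0, C_1)$.

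Next, define $\psi^\alpha \colon G^{\neq} \to G$ to be constant on archimedean classes: $\psi^\alpha(x) := \psi(\delta)$ if $[x] = [\delta]$ for some $\delta \in \Gamma^{\neq}$, and $\psi^\alpha(x) := \beta$ if $[x] = [\alpha]$. This is well-defined by $H$-type for $(\Gamma, \psi)$, and the hypotheses $\gamma^\dagger \leq \beta$ for $[\gamma] \in C_1$ and $\beta \leq \delta^\dagger$ for $[\delta] \in C_0$ are precisely what is needed to make $[x] \mapsto \psi^\alpha(x)$ order-reversing on $[G^{\neq}]$, yielding (HC). Axioms (AC1) and (AC2) are immediate from constancy on archimedean classes.

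The heart of the proof is (AC3): $x + \psi^\alpha(x) > \psi^\alpha(y)$ for all $x > 0$ and $y \in G^{\neq}$. When $x \in \Gamma^{>}$, we have $x + \psi^\alpha(x) = x' \in (\Gamma^{>})'$, and $x' > \Psi$ by (AC3) in $\Gamma$ while $x' > \beta$ by the hypothesis $\beta < (\Gamma^{>})'$, covering both possibilities for $\psi^\alpha(y) \in \Psi \cup \{\beta\}$. The genuinely new case is $[x] = [\alpha]$ with $\psi^\alpha(y) = \psi(z)$ for some $z \in \Gamma^{\neq}$ having $[z] \in C_0$: then $\psi^\alpha(x) = \beta$, and we must show $x + \beta > \psi(z)$, i.e., $x > \xi$ where $\xi := \psi(z) - \beta \geq 0$. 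Assuming $\xi > 0$, apply (AC3) in $\Gamma$ to $\xi$: $\xi + \psi(\xi) > \psi(z) = \xi + \beta$ gives $\psi(\xi) > \beta$, and combined with the $C_1$-hypothesis this forces $[\xi] \in C_0$; hence $[\xi] < [\alpha] = [x]$, so $x > \xi$ as required. The remaining cases, where $x$ has both a $\Gamma^{\neq}$-component and an $\alpha$-component, are handled by analogous archimedean-class reductions. This archimedean trick is the main obstacle of the proof.

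Finally, the universal property is essentially forced. Given $i$ and $\alpha_1$ as in (2), the unique group-theoretic extension of $i$ sending $\alpha \mapsto \alpha_1$ is $j(\gamma + q\alpha) := i(\gamma) + q\alpha_1$; injectivity and order-preservation follow from the matching cut conditions on $[\alpha]$ and $[\alpha_1]$. For $\psi$-preservation, let $H \subseteq \Gamma_1$ denote the subgroup generated by $i(\Gamma)$ and $\alpha_1$; then $j \colon G \to H$ is an order-isomorphism inducing a bijection $[G] \to [H]$ of archimedean classes. By Lemma~\ref{archclassesembedding} applied to $H$, there is a unique $H$-asymptotic $\psi$-map on $H$ extending $\psi$ on $i(\Gamma)$; since both $\psi_1|_H$ and $j_{*} \psi^\alpha$ qualify (using $\psi_1(\alpha_1) = \beta = \psi^\alpha(\alpha)$), they coincide, whence $j$ is a $\psi$-embedding.
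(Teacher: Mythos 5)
The paper states this lemma without proof, citing it as a divisible variant of~\cite[Lemma 9.8.7]{mt}; your construction is the expected one and is essentially correct. The ordering you place on $G=\Gamma\oplus\Q\alpha$, the definition of $\psi^{\alpha}$ by constancy on archimedean classes, and the verifications of (HC), (AC1), (AC2), and your central (AC3) case are sound. The ``remaining cases'' of (AC3) you defer --- where $x=\gamma+q\alpha$ has $[\gamma]\in C_1$ dominant so that $[x]=[\gamma]$ --- do reduce to the $\Gamma$-case by replacing $x$ with a nearby element such as $\gamma/2\in\Gamma^{>}$ (using $0<\gamma/2<x$ and $[\gamma/2]=[x]$, so $x+\psi^{\alpha}(x)>(\gamma/2)'$), though it would be cleaner to spell this out.

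One point needs fixing: your appeal to Lemma~\ref{archclassesembedding} ``applied to $H$'' is not what that lemma says. The inclusion $i(\Gamma)\hookrightarrow H$ does \emph{not} induce a bijection on archimedean classes, since $[\alpha_1]\notin[i(\Gamma^{\neq})]$, so Lemma~\ref{archclassesembedding} does not furnish a \emph{unique} $H$-type $\psi$-map on $H$ extending $\psi$ on $i(\Gamma)$ --- there may be several, differing in their value at $[\alpha_1]$. Your parenthetical ``(using $\psi_1(\alpha_1)=\beta=\psi^{\alpha}(\alpha)$)'' is precisely what closes this gap. The clean argument is simply that an $H$-type $\psi$-map on $H$ is determined by its value on each class of $[H^{\neq}]=[i(\Gamma^{\neq})]\cup\{[\alpha_1]\}$, and $\psi_1|_H$ and $j_{*}\psi^{\alpha}$ agree on all of them: on $[i(\Gamma^{\neq})]$ because both extend $\psi$ through $i$, and on $[\alpha_1]$ because both give $i(\beta)$. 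Alternatively, apply Lemma~\ref{archclassesembedding} to the embedding $j\colon G\to H$ (which \emph{does} induce a bijection $[G]\to[H]$), feeding in $\psi^{\alpha}$, and compare the resulting $\psi_H$ against $\psi_1|_H$ using the explicit formula from that lemma's proof. Either way your conclusion stands; just replace the incorrect invocation with one of these.
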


\begin{cor}
\label{typedeterminedbyseveralthings}
Suppose $(\Gamma\oplus\Q\alpha,\psi^{\alpha})$ and $(\Gamma\oplus\Q\beta,\psi^{\beta})$ are two $H$-asymptotic couple extensions of $(\Gamma,\psi)$ such that: 
\begin{enumerate}
\item $\psi^{\alpha}((\Gamma\oplus\Q\alpha)^{\neq}) = \Psi = \psi^{\beta}((\Gamma\oplus\Q\beta)^{\neq})$,
\item $\alpha>0$ and $\beta>0$,
\item $\psi^{\alpha}(\alpha) = \psi^{\beta}(\beta)$, and
\item $[\alpha]\not\in[\Gamma]$, $[\beta]\not\in[\Gamma]$, and $[\alpha]$ and $[\beta]$ realize the same cut over $[\Gamma]$; 
\end{enumerate}
then necessarily $\alpha$ and $\beta$ realize the same cut over $\Gamma$ and the isomorphism $i:\Gamma\oplus\Q\alpha\to\Gamma\oplus\Q\beta$ of ordered abelian groups over $\Gamma$ which sends $\alpha$ to $\beta$ is also an isomorphism of $i:(\Gamma\oplus\Q\alpha,\psi^{\alpha})\to (\Gamma\oplus\Q\beta,\psi^{\beta})$ of asymptotic couples over $(\Gamma,\psi)$.
\end{cor}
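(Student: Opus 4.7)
The plan is to separate the two conclusions: first establish that $\alpha$ and $\beta$ realize the same cut over $\Gamma$, then apply the universal property of Lemma~\ref{cutembeddinglemma} to produce the isomorphism of asymptotic couples.

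For the cut statement I would reason case by case on $\gamma\in\Gamma$. If $\gamma\leq 0$, then both $\alpha>\gamma$ and $\beta>\gamma$ by positivity. If $\gamma>0$, then since $[\alpha]\notin[\Gamma]$ we have $[\gamma]\neq[\alpha]$; the assumption that $[\alpha]$ and $[\beta]$ realize the same cut over $[\Gamma]$ forces $[\gamma]<[\alpha]$ iff $[\gamma]<[\beta]$, and in either archimedean regime the comparison of $\gamma$ with $\alpha$ is determined entirely by that of $[\gamma]$ with $[\alpha]$ together with positivity, so it agrees with the corresponding comparison involving $\beta$.

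For the second conclusion, set $\beta_0:=\psi^{\alpha}(\alpha)=\psi^{\beta}(\beta)$, which by (3) and (1) lies in $\Psi\subseteq\Gamma$, and let $(C_0,C_1)$ denote the common cut of $[\alpha]$ and $[\beta]$ in $[\Gamma^{\neq}]$. The plan is to verify the hypotheses of Lemma~\ref{cutembeddinglemma} for $(C_0,C_1)$ and $\beta_0$: the condition $\beta_0<(\Gamma^{>})'$ is immediate from (AC3) since $\beta_0\in\Psi$. For the remaining two bulleted conditions I would work inside $(\Gamma\oplus\Q\alpha,\psi^{\alpha})$: if $[\gamma]\in C_1$ then $[\gamma]>[\alpha]$ gives $|\alpha|\leq|\gamma|$, and (HC) combined with (AC2) yields $\gamma^{\dagger}\leq\beta_0$; the symmetric argument in $C_0$ gives $\beta_0\leq\delta^{\dagger}$.

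Once Lemma~\ref{cutembeddinglemma} is applicable, it produces a universal extension $(\Gamma\oplus\Q\alpha_0,\psi^{\alpha_0})$. Applying its universal property twice, once with target $(\Gamma\oplus\Q\alpha,\psi^{\alpha})$ and distinguished element $\alpha$, and once with target $(\Gamma\oplus\Q\beta,\psi^{\beta})$ and distinguished element $\beta$, yields embeddings $j_{\alpha},j_{\beta}$ over $(\Gamma,\psi)$ sending $\alpha_0$ to $\alpha$ and to $\beta$ respectively. Each is surjective since its image contains $\Gamma$ together with $\alpha$ (resp.\ $\beta$), which span the target as a $\Q$-vector space; hence both are isomorphisms of asymptotic couples, and $j_{\beta}\circ j_{\alpha}^{-1}$ is the desired isomorphism. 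I expect the only real technical content to be the archimedean and sign bookkeeping needed to verify the bulleted hypotheses of Lemma~\ref{cutembeddinglemma}; the rest is a formal consequence of its universal property.
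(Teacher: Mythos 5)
Your proof is correct and is essentially the argument the paper intends: the corollary is stated immediately after Lemma~\ref{cutembeddinglemma} precisely so that its universal property can be invoked twice, once with target $(\Gamma\oplus\Q\alpha,\psi^{\alpha})$ and once with target $(\Gamma\oplus\Q\beta,\psi^{\beta})$, and your verification of the bulleted hypotheses (via (HC) and (AC2), and $\Psi<(\Gamma^{>})'$ from (AC3)) together with the surjectivity argument are sound. Your derivation of the cut-over-$\Gamma$ claim from hypotheses (2) and (4) is also correct, since $[\alpha]\notin[\Gamma]$ makes the comparison of any $\gamma\in\Gamma^{>}$ with $\alpha$ strict and determined entirely by the archimedean cut and sign.
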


\subsection{Back to Counting Types} \emph{For the rest of this section $\M$ will be a monster model of $T_{\log}$ and $\Gamma$ will be a small submodel of $\M$ of size $\kappa$.} As a warmup to proving Corollary~\ref{countingtypescor}, we first prove the following:

\begin{lemma}
\label{simplecountingtypes}
There are at most $\ded(\kappa)$-many types of the form $\tp(\alpha|\Gamma)$ where $\alpha\in\M\setminus\Gamma$ has the property that $\Gamma\langle\alpha\rangle = \Gamma\oplus\Q\alpha$ inside $\M$.
\end{lemma}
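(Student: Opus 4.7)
The plan is to invoke quantifier elimination for $T_{\log}$: two elements of $\M\setminus\Gamma$ realize the same type over $\Gamma$ iff there is an $L_{\log}$-isomorphism of their simple extensions over $\Gamma$ sending one to the other. So the task becomes counting isomorphism classes of pairs $(\Gamma\oplus\Q\alpha,\alpha)$ over $(\Gamma,\psi)$. I will split the argument into two cases according to whether adjoining $\alpha$ enlarges the archimedean-class set, and apply Corollary~\ref{typedeterminedbycut} and Corollary~\ref{typedeterminedbyseveralthings} respectively.

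\textbf{Case 1:} $[\Gamma\oplus\Q\alpha]=[\Gamma]$. Here Corollary~\ref{typedeterminedbycut} tells me that such a pair is determined up to isomorphism by the cut that $\alpha$ realizes in $\Gamma$. Since $|\Gamma|=\kappa$, the number of cuts in $\Gamma$ is at most $\ded(\kappa)$, which bounds this case.

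\textbf{Case 2:} $[\Gamma\oplus\Q\alpha]\neq[\Gamma]$. I fix $q_0\in\Q^{\neq}$ and $\gamma_0\in\Gamma$ so that $\alpha':=q_0\alpha-\gamma_0$ satisfies $[\alpha']\notin[\Gamma]$; such a choice exists by definition of the archimedean classes of $\Gamma\oplus\Q\alpha$. The identity $\Q^{\neq}\alpha'-\Gamma=\Q^{\neq}\alpha-\Gamma$ yields $T_{\Gamma}(\alpha')=T_{\Gamma}(\alpha)\subseteq\Psi$, so $\Gamma\langle\alpha'\rangle=\Gamma\oplus\Q\alpha'$ still has $\Psi$-set equal to $\Psi$. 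After replacing $\alpha'$ by $-\alpha'$ if necessary so that $\alpha'>0$, Corollary~\ref{typedeterminedbyseveralthings} determines $\tp(\alpha'|\Gamma)$ from the cut of $[\alpha']$ in $[\Gamma^{\neq}]$ together with the value $\psi^{\alpha'}(\alpha')\in\Psi$. With $|[\Gamma^{\neq}]|\leq\kappa$ and $|\Psi|\leq\kappa$, the number of such $\tp(\alpha'|\Gamma)$ is at most $\ded(\kappa)\cdot\kappa=\ded(\kappa)$. Since $\alpha=q_0^{-1}(\alpha'+\gamma_0)$ (with the sign adjusted), $\tp(\alpha|\Gamma)$ is recovered from $\tp(\alpha'|\Gamma)$ together with the data $(q_0,\gamma_0,\pm)\in\Q^{\neq}\times\Gamma\times\{\pm 1\}$, which adds a further factor of $\aleph_0\cdot\kappa\cdot 2=\kappa$. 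Thus Case 2 contributes $\ded(\kappa)\cdot\kappa=\ded(\kappa)$ types.

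Summing the two cases yields the bound $\ded(\kappa)$. The main obstacle is the reduction in Case 2: one must verify that replacing $\alpha$ by the renormalization $\alpha'$ preserves the hypotheses of Corollary~\ref{typedeterminedbyseveralthings}, in particular that $\Gamma\langle\alpha'\rangle=\Gamma\oplus\Q\alpha'$ still has $\Psi$-set exactly $\Psi$. This reduces to the algebraic identity $\Q^{\neq}\alpha'-\Gamma=\Q^{\neq}\alpha-\Gamma$, which is immediate from $q_0\in\Q^{\neq}$ and $\gamma_0\in\Gamma$, together with a bit of bookkeeping for signs and positivity to stay within the corollary's hypotheses.
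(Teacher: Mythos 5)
Your proof is correct and follows essentially the same route as the paper's: the same case split on whether $[\Gamma\oplus\Q\alpha]=[\Gamma]$, with Corollary~\ref{typedeterminedbycut} handling the first case and Corollary~\ref{typedeterminedbyseveralthings} the second, and the same final cardinality count $\ded(\kappa)$. The only cosmetic difference is that you normalize to $\alpha'=q_0\alpha-\gamma_0$ before invoking Corollary~\ref{typedeterminedbyseveralthings} whereas the paper applies it directly to $\gamma+q\alpha$; the underlying bookkeeping is the same.
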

\begin{proof}
We have to count the isomorphism types of elements $\alpha\in\M\setminus\Gamma$ that have the property that $\Gamma\langle\alpha\rangle = \Gamma\oplus\Q\alpha$. Let $\alpha\in\M\setminus\Gamma$ have this property. There are two cases to consider:

(Case 1) $[\Gamma\oplus\Q\alpha] = [\Gamma]$. In this case the isomorphism type of $\alpha$ over $\Gamma$ is determined completely by it's cut over $\Gamma$ by Corollary~\ref{typedeterminedbycut}. Thus there are at most $\ded(\kappa)$-many types that fall into this case.

(Case 2) $[\Gamma\oplus\Q\alpha]\neq[\Gamma]$. In this case, there will be some $\gamma\in\Gamma$, $q\in\Q^{\neq}$ such that $\gamma+q\alpha>0$ and $[\gamma+q\alpha]\not\in[\Gamma]$. In this case, the isomorphism type of $\alpha$ over $\Gamma$ is completely determined by this choice of $\gamma\in\Gamma$, $q\in\Q^{\neq}$, the cut that $[\gamma+q\alpha]$ realizes in $[\Gamma]$ and the element $\delta\in\Psi$ such that $\psi(\gamma+q\alpha) = \delta$, by Corollary~\ref{typedeterminedbyseveralthings}. Thus there are at most $\kappa\cdot\aleph_0\cdot\ded(\kappa)\cdot\kappa = \ded(\kappa)$-many types that fall into this case.
\end{proof}

\begin{proof}[Proof of Corollary~\ref{countingtypescor}]
Let $\alpha\in\M\setminus\Gamma$. Then by Theorem~\ref{simpleextensions}, we have three cases:

(Case 1) $\Gamma\langle\alpha\rangle\cong \Gamma_{\rho}$ for some increasing $\rho:n\to\sded(\Psi)$, for some $n$. In this case, the isomorphism type of $\alpha$ over $\Gamma$ is completely determined by the map $\rho$ and the specific element of $\Gamma_{\rho}$ which maps to $\alpha$. Since $|\Gamma_{\rho}| = |\Gamma|$, for each $n$ this gives $\ded(\kappa)^n\cdot\kappa = \ded(\kappa)$-many isomorphism types over $\Gamma$. In total, Case 1 gives $\sum_{n<\omega}\ded(\kappa) = \ded(\kappa)$-many types.

(Case 2) $\Gamma\langle\alpha\rangle\cong\Gamma_{\rho}\oplus\Q\alpha$ for some increasing $\rho:n\to\sded(\Psi)$, for some $n$. In this case, the isomorphism type of $\alpha$ over $\Gamma$ is determined by the map $\rho$ and then the type of $\alpha$ over the image of $\Gamma_{\rho}$ in $\M$. By Lemma~\ref{simplecountingtypes}, Case 2 gives $\sum_{n<\omega}\ded(\kappa)^n\cdot\ded(\kappa) = \ded(\kappa)$-many types.

(Case 3) $\Gamma\langle\alpha\rangle\cong\Gamma_{\rho}\oplus\Q\alpha$ for some increasing $\rho:\omega\to\sded(\Psi)$. In this case, the isomorphism type of $\alpha$ over $\Gamma$ is also determined by the map $\rho$ and then the type of $\alpha$ over the image of $\Gamma_{\rho}$ in $\M$. By Lemma~\ref{simplecountingtypes}, Case 3 gives $\ded(\kappa)^{\aleph_0}\cdot \ded(\kappa) = \ded(\kappa)^{\aleph_0}$-many types.
\end{proof}

\section{NIP}
\label{NIPsettheory}

\noindent
In this section we derive the main result of this paper as an immediate consequence of Corollary~\ref{countingtypescor}:

\begin{thm}
\label{TloghasNIP}
$T_{\log}$ and $T_0$ have NIP.
\end{thm}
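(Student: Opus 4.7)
The plan is to deduce Theorem~\ref{TloghasNIP} from Corollary~\ref{countingtypescor} via the counting-types and absoluteness swindle hinted at in the introduction. The required ingredients are: (i) the classical fact that if a complete theory $T$ has IP then for every infinite cardinal $\kappa$ there is a model $M \models T$ of cardinality $\kappa$ with $|S^1(M)| = 2^\kappa$; (ii) Corollary~\ref{countingtypescor}, giving $|S^1(\Gamma)| \leq \ded(|\Gamma|)^{\aleph_0}$ for every $\Gamma \models T_{\log}$; (iii) Mitchell's forcing theorem~\cite{mitchell}, producing a generic extension of ZFC in which $\ded(\kappa)^{\aleph_0} < 2^\kappa$ for some infinite $\kappa$; and (iv) the absoluteness of arithmetical statements between transitive models of ZFC.

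First I would suppose, toward contradiction, that $T_{\log}$ has IP. Combining (i) with Corollary~\ref{countingtypescor} would then force $2^\kappa \leq \ded(\kappa)^{\aleph_0}$ for every infinite cardinal $\kappa$. Next, starting from our ground model $V$, I would pass to a forcing extension $V[G]$ provided by (iii) in which $\ded(\kappa)^{\aleph_0} < 2^\kappa$ for some specific $\kappa$. Since Corollary~\ref{countingtypescor} is a theorem of ZFC, and since $T_{\log}$ is a fixed recursively axiomatized theory whose models are the same (over the absolute parameter $\aleph_0$) in $V$ and $V[G]$, the same counting bound applies in $V[G]$. But then, in $V[G]$, the hypothetical $2^\kappa$-many types demanded by IP cannot fit inside $\ded(\kappa)^{\aleph_0}$-many, so $T_{\log}$ must have NIP when read inside $V[G]$.

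The decisive step is to transfer this conclusion back to $V$. The statement ``$T_{\log}$ has NIP'' is arithmetical: it asserts that for every $L_{\log}$-formula $\phi(x;y)$ there exists $n$ such that a certain explicit finite fragment of $T_{\log}$ expressing ``$\phi$ shatters an $n$-tuple'' is inconsistent, a $\Pi^0_2$ condition on $\N$. Such statements are absolute between transitive models of ZFC (Shoenfield-style absoluteness), so NIP for $T_{\log}$ holds in $V$ as well.

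Finally, NIP for $T_0$ follows immediately: the extra symbols $s, p, \delta_1, \delta_2, \ldots$ of $L_{\log}$ are all $L_{AC}$-definable in any model of $T_0$ (this is built into Definition~\ref{functionsdefs} and the discussion preceding Section~\ref{sectionsimpleextensions}), so $T_{\log}$ is a definitional expansion of $T_0$ and the two theories have exactly the same definable sets, whence NIP transfers. The main potential obstacle is the absoluteness step, which hinges on $T_{\log}$ being recursively axiomatizable so that ``$T_{\log}$ has IP'' is genuinely arithmetical; this is granted by the axiomatization given in~\cite{gehret} and recalled after Definition~\ref{T0def}.
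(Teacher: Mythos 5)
Your argument is correct and follows the same route as the paper: bound $|S^1(\Gamma)|$ by $\ded(|\Gamma|)^{\aleph_0}$ via Corollary~\ref{countingtypescor}, compare with the $2^\kappa$ lower bound that IP would force via Proposition~\ref{stabfunNIP}, invoke Mitchell's consistency result to get a universe where these are incompatible, and pull NIP back by arithmetic absoluteness, then pass to $T_0$ as a reduct. You merely unfold the paper's terse phrase ``any proof of such a sentence \dots can be converted into a proof from ZFC'' into the explicit forcing-and-absoluteness argument that justifies it (Shoenfield is overkill here, since the statement is genuinely arithmetic, but harmless).
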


\medskip\noindent
\emph{For the rest of this section, $T$ is an arbitrary first-order theory with monster model $\M$.}

\begin{definition}
Let $R\subseteq \M^{m+n} = \M^m\times \M^n$ be a definable relation. We say that $R$, and any $L_{\M}$-formula $\phi(x,y)$ that defines $R$, has the \textbf{independence property} (or \textbf{IP}) if there are $(a_i)_{i\in\N}\subseteq \M^m$ and $(b_I)_{I\subseteq \N}\subseteq\M^n$  such that
\[
R(a_i,b_I)\Longleftrightarrow i\in I,\quad \text{for all $i\in\N$ and $I\subseteq\N$}.
\]
Otherwise we say that $R$, and any $L_{\M}$-formula $\phi(x,y)$ that defines $R$, does not have the independence property (or \textbf{has NIP}). 

\medskip\noindent
We say that $T$ \textbf{has NIP} if every definable relation $R\subseteq \M^{m+n}$ for every $m,n$ has NIP.
\end{definition}

\begin{definition}
Define the \textbf{stability function of $T$} to be the function
\[
g_T(\kappa) = \sup_{M\models T, |M| = \kappa}\left|\bigcup_{n<\omega}S^n(M)\right| = \sup_{M\models T, |M| = \kappa}\left|S^1(M)\right|.
\]
\end{definition}

\medskip\noindent
The main result concerning NIP and the function $g_T(\kappa)$ is the following:

\begin{prop}
\label{stabfunNIP}
If $T$ has NIP, then
\[
g_T(\kappa)\leq \ded(\kappa)^{|T|} \quad \text{for all $\kappa$},
\]
and if $T$ has the independence property, then
\[
g_T(\kappa) = 2^{\kappa}\quad \text{for all $\kappa$.}
\]
\end{prop}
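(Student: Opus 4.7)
The plan is to treat the two implications separately, and the proofs are largely standard facts about NIP that I would cite from \cite{simonNIP} or \cite{adlerNIP} rather than reprove from scratch.

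For the NIP direction I would argue formula-by-formula. The key combinatorial input is Shelah's theorem: if $\phi(x,y)$ is an NIP formula, then for any parameter set $A$ of cardinality $\kappa$, the number of complete $\phi$-types over $A$ (i.e., the number of ultrafilters on the Boolean algebra generated by $\{\phi(x,b):b\in A^{|y|}\}$) is at most $\ded(\kappa)$. This is proved by extracting an indiscernible sequence of parameters and using the fact that on indiscernibles an NIP formula has a uniformly finite alternation number, which lets one code $\phi$-types by the finite Boolean combinations of at most $n$ Dedekind cuts in the indiscernible sequence. Given this lemma, a complete type $p(x)\in S^n(M)$ for $|M|=\kappa$ is determined by specifying its restriction to $\{\phi(x,y):\phi\in L\}$, and the number of $L$-formulas is $|T|$. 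Hence $|S^n(M)|\leq\ded(\kappa)^{|T|}$, and summing over $n<\omega$ does not change the bound.

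For the IP direction I would use compactness plus Löwenheim--Skolem. Suppose $\phi(x,y)$ witnesses IP in $\M$, via $(a_i)_{i\in\N}$ and $(b_I)_{I\subseteq\N}$. The partial type in variables $(x_\alpha)_{\alpha<\kappa}$ and $(y_I)_{I\subseteq\kappa}$ asserting $\phi(x_\alpha,y_I)\Leftrightarrow\alpha\in I$ is finitely satisfiable in $\M$ (any finite fragment involves only finitely many $a$'s and $b$'s), so by saturation of $\M$ it is realized: we obtain $(a_\alpha)_{\alpha<\kappa}$ in $\M^m$ and $(b_I)_{I\subseteq\kappa}$ in $\M^n$ with $\models\phi(a_\alpha,b_I)\Leftrightarrow\alpha\in I$. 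Let $A=\{a_\alpha:\alpha<\kappa\}$, which has cardinality at most $\kappa$; by downward Löwenheim--Skolem embed $A$ into an elementary submodel $M\preccurlyeq\M$ with $|M|=\max(\kappa,|T|)=\kappa$ (assuming $\kappa\geq|T|$, which is the only nontrivial case). For distinct subsets $I,J\subseteq\kappa$, choose $\alpha\in I\triangle J$; then the formula $\phi(a_\alpha,y)$ separates $\tp(b_I/M)$ from $\tp(b_J/M)$, so the $b_I$ give $2^\kappa$ distinct types in $S^n(M)$. The reverse inequality $g_T(\kappa)\leq 2^\kappa$ is automatic, since an $n$-type over $M$ is a subset of the set of $L(M)$-formulas, of which there are $\kappa+|T|=\kappa$.

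The only nonroutine step is the Shelah bound $\ded(\kappa)$ on $\phi$-types of an NIP formula; everything else is a packaging argument. The plan is to invoke this from the NIP literature rather than reproduce the extraction of an indiscernible sequence and the finite-alternation analysis, since neither is specific to the asymptotic-couples setting of the rest of the paper.
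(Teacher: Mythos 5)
Your proposal is correct and is the standard textbook argument: the NIP direction reduces to Shelah's $\ded(\kappa)$ bound on $\phi$-types of an NIP formula and then multiplies over the $|T|$-many formulas, and the IP direction stretches the IP witnesses by compactness and separates types by a distinguishing instance of $\phi$. This is precisely the content the paper invokes by citing Theorem~II.4.10 of Shelah's \emph{Classification Theory} (and the accounts in Simon and Adler) without reproducing the proof, so you are taking essentially the same approach, just spelling out what the paper leaves to the literature.
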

\noindent
Proposition~\ref{stabfunNIP} is a global form of~\cite[Theorem 4.10]{ShelahClassificationTheory}. For additional accounts, also see~\cite[\S4]{adlerNIP} or~\cite[2.3.4]{simonNIP}.

\medskip\noindent
In the presence of the Generalized Continuum Hypothesis (GCH), we have $\ded(\kappa) = 2^{\kappa}$ for all $\kappa$ and so we cannot get a converse to Proposition~\ref{stabfunNIP}. However, if we dare to reject CH, then we have~\cite[Corollary 4.3]{mitchell} at our disposal:
\begin{prop}
$\operatorname{Con}(\text{ZF}) \to \operatorname{Con}(\text{ZFC}, \;\;2^{\aleph_0} = \aleph_{\omega_1}, \;\;2^{\aleph_1}=\aleph_{\omega_1}^+,\text{ and}\;\;\ded(\aleph_1)<2^{\aleph_1}).$
\end{prop}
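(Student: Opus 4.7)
The statement is a purely set-theoretic consistency claim whose proof lies outside model theory; the plan is to invoke Mitchell's forcing construction rather than rederive it from scratch. First I would replace the hypothesis $\operatorname{Con}(\mathrm{ZF})$ with $\operatorname{Con}(\mathrm{ZFC}+\mathrm{GCH})$ for free by passing to the constructible universe $L$, so the task reduces to producing, in some cardinal-preserving generic extension of a GCH ground model $V$, a model of ZFC satisfying all three desired equalities together with the strict inequality $\ded(\aleph_1)<2^{\aleph_1}$.

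The cardinal-arithmetic targets $2^{\aleph_0}=\aleph_{\omega_1}$ and $2^{\aleph_1}=\aleph_{\omega_1}^+$ are the standard part: one takes a two-step iteration (or a carefully chosen product) of Cohen-style forcings, first adjoining $\aleph_{\omega_1}$-many subsets of $\omega$ and then $\aleph_{\omega_1}^+$-many subsets of $\omega_1$. Standard $\Delta$-system and chain-condition arguments show that a judicious choice preserves cardinals and cofinalities and realizes both exponentiation equalities in the extension.

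The genuinely hard part, and the content of Mitchell's innovation, is arranging $\ded(\aleph_1)<2^{\aleph_1}$ simultaneously. A priori, once $2^{\aleph_1}$ has been blown up to $\aleph_{\omega_1}^+$ there could be a linear order of size $\aleph_{\omega_1}^+$ admitting a dense subset of size $\aleph_1$, which would collapse the inequality to an equality. To prevent this one needs a nice-name counting argument: the forcing poset $\mathbb{P}$ must be tailored so that any linear order in $V[G]$ with a dense $\aleph_1$-sized subset admits a canonical $\mathbb{P}$-name built from antichains of tightly controlled size, and counting such names against the chain condition bounds the number of possible isomorphism types of such orders by $\aleph_{\omega_1}$, hence by strictly less than $2^{\aleph_1}=\aleph_{\omega_1}^+$.

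The main obstacle is precisely this nice-name counting lemma, since it demands delicate compatibility between the combinatorics of $\mathbb{P}$ and the targeted exponentiation values; one cannot simply bolt it on to an arbitrary Cohen product. Since this lemma is exactly the content of \cite[Corollary 4.3]{mitchell} and lies well outside the scope of a paper on asymptotic couples, the honest course of action is to cite Mitchell's theorem as a black box rather than attempt to reproduce the forcing argument here.
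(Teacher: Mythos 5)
Your proposal matches the paper exactly: the paper offers no proof of this proposition, citing it directly as \cite[Corollary 4.3]{mitchell}, which is precisely the conclusion you reach. Your preliminary sketch of the forcing construction is reasonable motivational context, but it is extra exposition beyond what the paper itself provides, and the paper treats Mitchell's result as a black box just as you do.
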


\noindent
Note that if we are in a model of ZFC where $2^{\aleph_0} = \aleph_{\omega_1}$, $2^{\aleph_1}=\aleph_{\omega_1}^+$ and $\ded(\aleph_1)<2^{\aleph_1}$ are true, then it follows that $\ded(\aleph_1)\leq\aleph_{\omega_1}$ and so
\[
\ded(\aleph_1)^{\aleph_0}\leq \aleph_{\omega_1}^{\aleph_0} = (2^{\aleph_0})^{\aleph_0} = 2^{\aleph_0\cdot\aleph_0} = 2^{\aleph_0} = \aleph_{\omega_1}<\aleph_{\omega_1}^{+} = 2^{\aleph_1}.
\]

\medskip\noindent
In other words:

\begin{cor}[Mitchell]
\label{keycon}
$\operatorname{Con}(\text{ZF})\to \operatorname{Con}(\text{ZFC and $\ded(\aleph_1)^{\aleph_0}<2^{\aleph_1}$})$.
\end{cor}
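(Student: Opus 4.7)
The statement is essentially a formal restatement of the calculation performed in the paragraph immediately preceding it, combined with Mitchell's forcing result (the preceding \textbf{Proposition}). The plan is therefore not to do any new forcing, but to package these two inputs into a clean implication between consistency statements.

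First I would assume $\operatorname{Con}(\mathrm{ZF})$ and apply the preceding \textbf{Proposition} (Mitchell) to produce a model $V$ of $\mathrm{ZFC}$ in which the three equations
\[
2^{\aleph_0} = \aleph_{\omega_1}, \quad 2^{\aleph_1} = \aleph_{\omega_1}^{+}, \quad \ded(\aleph_1) < 2^{\aleph_1}
\]
simultaneously hold. Working inside $V$, I would then verify the bound $\ded(\aleph_1)^{\aleph_0} < 2^{\aleph_1}$ by direct cardinal arithmetic. Since $\ded(\aleph_1)$ is a cardinal and $\ded(\aleph_1) < 2^{\aleph_1} = \aleph_{\omega_1}^{+}$, we get $\ded(\aleph_1) \leq \aleph_{\omega_1}$. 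Raising to the power $\aleph_0$ and using $\aleph_{\omega_1} = 2^{\aleph_0}$ gives
\[
\ded(\aleph_1)^{\aleph_0} \leq \aleph_{\omega_1}^{\aleph_0} = (2^{\aleph_0})^{\aleph_0} = 2^{\aleph_0 \cdot \aleph_0} = 2^{\aleph_0} = \aleph_{\omega_1} < \aleph_{\omega_1}^{+} = 2^{\aleph_1},
\]
which is precisely the desired inequality holding in $V$. Hence $V$ is a model of $\mathrm{ZFC} + \ded(\aleph_1)^{\aleph_0} < 2^{\aleph_1}$, witnessing the consistency of that theory relative to $\mathrm{ZF}$.

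There is no genuine obstacle here beyond carefully quoting Mitchell: the only non-trivial content is the previously cited forcing construction, which delivers simultaneous control of $2^{\aleph_0}$, $2^{\aleph_1}$, and $\ded(\aleph_1)$. The remaining work is the two-line cardinal computation above, where the key cancellation is $(2^{\aleph_0})^{\aleph_0} = 2^{\aleph_0}$; I would simply record this as the proof of the corollary.
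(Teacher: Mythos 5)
Your proposal is correct and matches the paper's argument exactly: the paper likewise invokes Mitchell's proposition to get a model with $2^{\aleph_0}=\aleph_{\omega_1}$, $2^{\aleph_1}=\aleph_{\omega_1}^{+}$, and $\ded(\aleph_1)<2^{\aleph_1}$, deduces $\ded(\aleph_1)\leq\aleph_{\omega_1}$, and performs the same computation $\ded(\aleph_1)^{\aleph_0}\leq(2^{\aleph_0})^{\aleph_0}=2^{\aleph_0}=\aleph_{\omega_1}<2^{\aleph_1}$. No differences worth noting.
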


\medskip\noindent
By absoluteness of NIP, Proposition~\ref{stabfunNIP} and Corollary~\ref{keycon}, we get:

\begin{proof}[Proof of Theorem~\ref{TloghasNIP}]
Since $T_{\log}$ is countable in a recursive language with a recursively enumerable axiomatization, the statement ``$T_{\log}$ has NIP" is an arithmetic statement, i.e., via G\"{o}del numbering this statement is expressible by a sentence in Peano arithmetic. Any proof of such a sentence from ZFC $ + (\ded(\aleph_1)^{\aleph_0}<2^{\aleph_1})$ can be converted into a (possibly much longer) proof from ZFC. Now, suppose we are in a model of ZFC $ + (\ded(\aleph_1)^{\aleph_0}<2^{\aleph_1})$. Then in such a model it follows from Corollary~\ref{countingtypescor} that $g_{T_{\log}}(\aleph_1)\leq \ded(\aleph_1)^{\aleph_0}<2^{\aleph_1}$. Then by Proposition~\ref{stabfunNIP}, it follows that $T_{\log}$ has NIP in that particular model, i.e.
\[
\text{ZFC $ + (\ded(\aleph_1)^{\aleph_0}<2^{\aleph_1})\vdash \text{$T_{\log}$ has NIP}$}
\]
and thus 
\[
\text{ZFC $ \vdash\text{$T_{\log}$ has NIP}$,}
\] or in other words, $T_{\log}$ has NIP. It follows that $T_0$ also has NIP since every model of $T_0$ can be expanded into a model of $T_{\log}$.
\end{proof}

\section{Other Results}
\label{otherresults}

\subsection{The Steinitz Exchange Property} Given an arbitrary theory $T$, a parameter set $A$ and an element $a$ in $\M$, we say that \textbf{$a$ is algebraic over $A$} if $a$ belongs to a \emph{finite} $A$-definable subset of $\M$. Then we define the \textbf{algebraic closure of $A$ in $\M$} as the set
\[
\acl(A):= \{a\in\M: \text{$a$ is algebraic over $A$}\}.
\]

\begin{definition}
A theory $T$ is said to have the \textbf{Steinitz exchange property} if for all sets $A$ and all elements $a,b\in\M$, if $a\not\in\acl(A)$ and $b\not\in\acl(A)$, then
\[
a\in\acl(A\cup\{b\}) \Longleftrightarrow b\in\acl(A\cup\{a\}).
\]
\end{definition}

\noindent
If a theory $T$ has the Steinitz exchange property, then the algebraic closure operator $\acl$ will be a so-called \emph{pregeometry}. For more on the role of pregeometries in model theory, we refer the reader to~\cite[Chapter 8]{marker}. For our theory $T_{\log}$, the algebraic closure operator will \emph{not} be a pregeometry:

\begin{prop}
$T_{\log}$ does not have the Steinitz exchange property.
\end{prop}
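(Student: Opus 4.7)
The plan is to exhibit a small model $\Gamma \models T_{\log}$ (regarded as an elementary substructure of $\M$) together with elements $\alpha, \beta \in \M \setminus \Gamma$ such that $\beta \in \acl(\Gamma \cup \{\alpha\})$, $\alpha \notin \acl(\Gamma \cup \{\beta\})$, and $\alpha, \beta \notin \acl(\Gamma)$; this directly contradicts the Steinitz exchange property.

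I would adopt the construction of Example~4 wholesale. Fix any small $\Gamma \models T_{\log}$ and an increasing $\rho : \omega \to \sded(\Psi) \setminus \{\Psi\}$. Using Lemma~\ref{rhoZ} together with the saturation of $\M$, realize the extension $\Gamma_\rho$ inside $\M$, with its distinguished family $(\beta_{k, n})_{k \in \Z,\, n < \omega}$. Set $\beta := \beta_{1, 0}$, form the pc-sequence $\alpha_n := \sum_{j=0}^{n} (\beta_{1, j} - \beta_{0, j}) \in \Gamma_\rho$, and let $\alpha \in \M$ be a pseudolimit (provided by Lemma~\ref{pclimit} and saturation). The computations of Example~4 then yield both $\psi(\alpha) = \beta$ and $\Gamma\langle\alpha\rangle = \Gamma_\rho \oplus \Q\alpha$; in particular $\Gamma_\rho \subseteq \Gamma\langle\alpha\rangle$.

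On the one hand $\beta = \psi(\alpha) \in \operatorname{dcl}(\{\alpha\}) \subseteq \acl(\Gamma \cup \{\alpha\})$. On the other hand $\Gamma\langle\beta\rangle$, being generated from $\beta$ by the primitives $s$ and $p$ together with $\Gamma$, reproduces exactly the first new copy of $\Z$, namely $\Gamma\langle\beta\rangle \cong \Gamma_{\rho \upharpoonright 1}$. By Lemma~\ref{rhoZ}(6) this is itself a model of $T_{\log}$, so by model completeness $\Gamma\langle\beta\rangle \preceq \M$; consequently $\acl(\Gamma \cup \{\beta\}) = \Gamma\langle\beta\rangle = \Gamma_{\rho \upharpoonright 1}$, since elementary substructures are algebraically closed in the ambient monster. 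But $\Gamma_\rho \supsetneq \Gamma_{\rho \upharpoonright 1}$ and $\Gamma_\rho \subseteq \Gamma\langle\alpha\rangle$, so if $\alpha$ were to lie in $\Gamma_{\rho \upharpoonright 1}$ then $\Gamma\langle\alpha\rangle \subseteq \Gamma_{\rho \upharpoonright 1} \subsetneq \Gamma_\rho$, a contradiction. Hence $\alpha \notin \acl(\Gamma \cup \{\beta\})$. The same argument shows $\acl(\Gamma) = \Gamma$, which contains neither $\alpha$ nor $\beta$.

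The main obstacle is verifying that $\Gamma_\rho \subseteq \Gamma\langle\alpha\rangle$, i.e., that the single element $\alpha$ generates the full $\omega$-tower of new copies of $\Z$ inside $\Gamma_\rho$. This is precisely the inductive recovery executed in Example~4: one computes $\psi(\alpha) = \beta_{1, 0}$ to obtain the first copy via $s, p$; this puts $\alpha_0 = \beta_{1, 0} - \beta_{0, 0}$ inside $\Gamma\langle\alpha\rangle$; the shifted pc-sequence $(\alpha_n - \alpha_0)_{n \geq 1}$ still pseudoconverges to $\alpha - \alpha_0$ and one extracts $\psi(\alpha - \alpha_0) = \beta_{1, 1}$ to recover the second copy; and so on by induction on $n$. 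Theorem~\ref{simpleextensions} then pins down $\Gamma\langle\alpha\rangle$ as the case~(3) outcome $\Gamma_\rho \oplus \Q\alpha$, certifying that the substructure contains everything one needs and nothing extra.
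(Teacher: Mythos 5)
Your proof is correct, but it takes a genuinely heavier route than the paper's. The paper does not reach for Example~4 or for pseudolimits at all: it constructs $\Gamma_{\rho}$ for a \emph{finite} increasing $\rho:2\to\sded(\Psi)\setminus\{\Psi\}$, i.e., it adds just two new copies of $\Z$ with representatives $(\beta_{k,0})$ and $(\beta_{k,1})$, and sets $a=\beta_{0,0}$, $b=\beta_{0,0}+\beta_{0,1}$. The ``stripping-off'' computation of Example~3 then gives $\acl(\Gamma\cup\{b\})=\Gamma\langle b\rangle=\Gamma_{\rho}$ (so $a\in\acl(\Gamma\cup\{b\})$), while $\acl(\Gamma\cup\{a\})=\Gamma\langle a\rangle=\Gamma_{\rho\upharpoonright 1}$ misses $\beta_{0,1}$ and hence $b$. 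Your version achieves the same asymmetry, but you need the full $\omega$-tower $\Gamma_{\rho}$ together with a pseudolimit $\alpha$ (hence saturation of $\M$, Lemma~\ref{pclimit}, and the inductive recovery from Example~4) just to produce one witness $\alpha$; the crucial relation $\psi(\alpha)=\beta_{1,0}$ then replaces the simpler observation that $b$ is a primitive element for the two-copy extension. Both proofs exploit the same structural fact from Theorem~\ref{simpleextensions} --- that simple extensions can have strictly different ``$s$-cut footprints'' --- but the paper's witness pair is entirely finitary and does not depend on realizing any pc-sequence, making it the more economical choice. Your argument does have the small pedagogical merit of showing that the failure of exchange already occurs between an element and its own $\psi$-value, which is a cleaner conceptual picture, at the cost of invoking Case~(3) of Theorem~\ref{simpleextensions} where the paper only needs Case~(1).

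One minor point worth flagging: you assert ``$\acl(\Gamma\cup\{\beta\})=\Gamma\langle\beta\rangle$, since elementary substructures are algebraically closed in the ambient monster.'' The paper states the cleaner equivalence $\acl(A)=\langle A\rangle$ directly from universal axiomatization plus model completeness, which is the fact you actually need; be careful to cite that rather than a general statement about elementary substructures being algebraically closed, since the nontrivial direction is $\acl(A)\subseteq\langle A\rangle$ and that uses model completeness essentially.
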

\begin{proof}
Since $T_{\log}$ has a universal axiomatization and is model complete, we have that for all $A$, $\acl(A) = \langle A\rangle$. Let $\Gamma$ be a small model and construct an elementary extension $\Gamma_{\rho}$ of $\Gamma$ for some $\rho:2\to\sded(\Psi)$ inside $\M$. Let $(\beta_{k,0})$ and $(\beta_{k,1})$ be the two copies of $\Z$ which were added to $\Gamma$ in $\Gamma_{\rho}$. Let $a = \beta_{0,0}$ and $b = \beta_{0,0}+\beta_{0,1}$. By calculations done in Section~\ref{examples}, we have $\acl(\Gamma \cup\{b\}) = \Gamma\langle b\rangle = \Gamma_{\rho}$ whereas $\acl(\Gamma \cup\{a\}) = \Gamma\langle a\rangle = \Gamma_{\rho\upharpoonright 1}$.
\end{proof}

\subsection{Applications of Section~\ref{moreasymptoticintegration}} 
\emph{In this subsection we let $(\Gamma,\psi)$ be a divisible $H$-asymptotic couple with asymptotic integration, construed as an $L_{AC}$-structure in the obvious way.}
 We let $\chi$ denote the contraction map on $(\Gamma,\psi)$. The material in this subsection naturally would belong in Section~\ref{moreasymptoticintegration} as it applies in general to arbitrary $H$-asymptotic couples with asymptotic integration. However, we chose to relegate it to Section~\ref{otherresults} because it was not relevant for Section~\ref{sectionsimpleextensions} and because of its relevance in the next subsection.

\medskip\noindent
We begin with the following application of Proposition~\ref{spsigap}:

\begin{cor}
\label{sequalspsi}
For any $q\in\Q^{>}$, and for all $\alpha\in\Gamma$ such that $|\alpha|>(1+q)|s0|$, $s(\alpha) = \psi(\alpha)$.
\end{cor}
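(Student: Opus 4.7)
My plan is to invoke Proposition~\ref{spsigap}. Applying Lemma~\ref{Zinmiddle} to the empty $s$-cut $B=\emptyset$, embed $(\Gamma,\psi)$ into an $H$-asymptotic couple $(\Gamma^*,\psi^*)\supseteq(\Gamma,\psi)$ with asymptotic integration and an element $\gamma^*\in\Psi^*$ satisfying $\Psi<\gamma^*$; moreover, $\gamma^*$ is to be placed just above $\Psi$ in the archimedean class of $s0$ so that $|\gamma^*|\leq(1+q/2)|s0|$. Proposition~\ref{spsigap} then gives
\[
s(\alpha)=\psi^*(\alpha-\gamma^*)
\]
for the given $\alpha\in\Gamma$, and (specializing to $\alpha=0$) $\psi^*(\gamma^*)=s0$.

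To conclude $s(\alpha)=\psi(\alpha)$ I would show $\psi^*(\alpha-\gamma^*)=\psi^*(\alpha)$, which by the $H$-type properties (HC) and (AC2) (which together force $\psi^*$ to be constant on archimedean classes of $\Gamma^*$) reduces to verifying $[\alpha-\gamma^*]=[\alpha]$. Combining the hypothesis $|\alpha|>(1+q)|s0|$ with $|\gamma^*|\leq(1+q/2)|s0|$ yields
\[
|\alpha-\gamma^*|\;\geq\;|\alpha|-|\gamma^*|\;>\;\frac{q/2}{1+q}\,|\alpha|\qquad\text{and}\qquad|\alpha-\gamma^*|\;\leq\;|\alpha|+|\gamma^*|\;<\;2|\alpha|,
\]
so $|\alpha-\gamma^*|$ and $|\alpha|$ differ only by a bounded rational factor and thus lie in the same archimedean class of $\Gamma^*$. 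This forces $\psi^*(\alpha-\gamma^*)=\psi^*(\alpha)=\psi(\alpha)$, and hence $s(\alpha)=\psi(\alpha)$.

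The main obstacle is the archimedean bookkeeping surrounding $\gamma^*$: one must verify that the element adjoined via Lemma~\ref{Zinmiddle} at the empty $s$-cut really can be chosen with $|\gamma^*|\leq(1+q/2)|s0|$ (equivalently, that $\Psi$ does not accumulate past this bound inside $\Gamma$, a structural feature of divisible $H$-asymptotic couples with asymptotic integration in which $\Psi\subseteq[s0]$). Once this placement is secured, the strict multiplicative gap of size $q/2$ between $|\alpha|/|s0|$ and $|\gamma^*|/|s0|$ is exactly what prevents the archimedean cancellation that would otherwise send $\alpha-\gamma^*$ into a strictly lower class and break the argument.
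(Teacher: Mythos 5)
Your route is the paper's route: extend via Lemma~\ref{Zinmiddle} with $B=\emptyset$ to get $\gamma^*\in\Psi^*$ with $\Psi<\gamma^*$, apply Proposition~\ref{spsigap}, and reduce $s(\alpha)=\psi(\alpha)$ to $[\alpha-\gamma^*]=[\alpha]$, which your inequality manipulation then delivers correctly from the hypothesis $|\alpha|>(1+q)|s0|$ together with a bound of the form $|\gamma^*|\leq(1+q/2)|s0|$.

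The one genuine gap is exactly the step you flag, and the justification you sketch for it does not work. First, $\Psi\subseteq[s0]$ is \emph{not} a structural feature of divisible $H$-asymptotic couples with asymptotic integration: in a closed asymptotic couple one has $\Psi=(\Gamma^{<})'$, which is downward closed and meets infinitely many archimedean classes. Second, even where $\Psi\subseteq[s0]$ does hold it would not suffice, since it does not by itself exclude, say, $2s0\in\Psi$, which would defeat the bound $\gamma^*<(1+q/2)s0$ for small $q$. What is actually needed (and what the paper proves in the lemma immediately preceding the corollary) is the sharper statement that $\Psi<(1+q'')s0$ for every $q''\in\Q^{>}$ when $s0>0$, and $\Psi<(1-q'')s0$ when $s0<0$; this follows from the identity $\psi(s0)=\psi(-\textstyle\int 0)=s0$, which gives $(q''s0)'=(1+q'')s0\in(\Gamma^{>})'$, and then (AC3). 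Applying this lemma in the extension $(\Gamma^*,\psi^*)$ shows that \emph{any} $\gamma^*\in\Psi^*$ with $\Psi<\gamma^*$ automatically satisfies $s0<\gamma^*<(1+q/2)s0$ (resp.\ $s0<\gamma^*<0$), so no careful placement of $\gamma^*$ is required. With that lemma supplied, your argument closes.
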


\noindent
Corollary~\ref{sequalspsi} and its proof below indicates the functions $s$ and $\psi$ agree sufficiently far away from the convex hull of $\{0\}\cup\{s0\}\cup\Psi^{>s0}$. At the moment this observation isn't very fruitful for models of $T_0$ since most of the action happens around this set anyway. However, for other asymptotic couples, such as the so-called \emph{closed asymptotic couples} of~\cite{closedasymptoticcouples}, this can be useful in further relating the roles of $s$ and $\psi$.

\medskip
\noindent
We begin first with a lemma which further clarifies the relationship between $s0$, $0$ and $\Psi$ in an H-asymptotic couple:
\begin{lemma}
$s0\neq 0$ and thus either $s0<0$ or $0<s0$. If $s0<0$, then $\Psi<(1-q)s0$, and if $0<s0$, then $\Psi<(1+q)s0$ for any $q\in\Q^{>}$.
\end{lemma}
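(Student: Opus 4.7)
The plan is to derive the whole lemma from the self-referential identity $\psi(s0)=s0$, together with (AC2) and (AC3); no new machinery is needed.

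First I would verify $s0\neq 0$ and $\psi(s0)=s0$. By Definition~\ref{functionsdefs}, the element $\delta:=\int 0\in\Gamma^{\neq}$ satisfies $\delta'=\delta+\psi(\delta)=0$, whence $\psi(\delta)=-\delta$ and therefore $s0=\psi(\delta)=-\delta\neq 0$. Applying (AC2) then gives $\psi(s0)=\psi(-\delta)=\psi(\delta)=s0$. I would also record the routine extension of (AC2) from $\Z^{\neq}$ to $\Q^{\neq}$ scalars: for $q=m/n$ with $m\in\Z^{\neq}$, $n\in\Z^{>0}$, and $\alpha\in\Gamma^{\neq}$, write $\beta:=q\alpha$ (nonzero since $\Gamma$ is torsion-free), so $n\beta=m\alpha$, and (AC2) yields $\psi(\beta)=\psi(\alpha)$. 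Consequently $\psi(qs0)=s0$ for every $q\in\Q^{\neq}$.

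To finish, fix $q\in\Q^{>}$ and set $\alpha:=qs0$ if $s0>0$, and $\alpha:=-qs0$ if $s0<0$. In either case $\alpha\in\Gamma^{>}$, and by the previous step $\psi(\alpha)=s0$. Applying (AC3) to this $\alpha$ yields $\alpha+\psi(\alpha)>\psi(\beta)$ for every $\beta\in\Gamma^{\neq}$, i.e., $\alpha+s0>\Psi$. Computing $\alpha+s0$ in the two cases gives $\Psi<(1+q)s0$ when $s0>0$ and $\Psi<(1-q)s0$ when $s0<0$, as desired.

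I expect no real obstacle; the lemma is essentially an unpacking of (AC2) and (AC3) around the fixed point $\psi(s0)=s0$. The only minor bookkeeping is the $\Q$-homogeneity of $\psi$ on $\Gamma^{\neq}$, which is immediate from divisibility.
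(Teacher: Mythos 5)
Your proposal is correct and follows essentially the same route as the paper: establish $s0\neq 0$ via $\int 0\neq 0$, exploit the fixed-point identity $\psi(s0)=s0$ together with the $\Q$-homogeneity of $\psi$, and then invoke (AC3) applied to the positive element $\pm qs0$. The only difference is cosmetic — you make the fixed point $\psi(s0)=s0$ and the extension of (AC2) to rational scalars fully explicit, whereas the paper leaves both implicit in its computation of $(\mp qs0)'$ — so there is nothing to flag.
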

\begin{proof}
Since $\int0\neq 0$, it follows that $0-\int 0\neq 0$ and thus $s0\neq 0$. If $s0<0$, then 
\[
(-qs0)' = -qs0+\psi(-qs0) = -s0+\psi(s0) = (1-q)s0\in(\Gamma^{>})'
\]
 and thus $\Psi<0$ by (AC3). If $0<s0$, then for $q\in\Q^{>}$, 
 \[
 (qs0)' = qs0+\psi(qs0) = (1+q)s0\in(\Gamma^{>})',
 \]
  and likewise $\Psi<(1+q)s0$.
\end{proof}

\begin{proof}[Proof of Corollary~\ref{sequalspsi}]
Suppose $q\in\Q^{>}$ and $\alpha\in\Gamma$ is such that $|\alpha|>(1+q)|s0|$. Let $(\Gamma^*,\psi^*)$ be an $H$-asymptotic couple with asymptotic integration that extends $(\Gamma,\psi)$ which contains an element $\gamma^*\in\Psi^*$ such that $\Psi<\gamma^*$. If $s0<0$, then $s0<\gamma^*<0$ and thus $|\alpha|\geq (1+q)|\gamma^*|$. Otherwise, if $s0>0$, then $s0<\gamma^*<(1+q')s0$ for every $q'\in\Q^{>}$ and thus $|\alpha|\geq (1+q)|\gamma^*|$ as well in this case. In both cases, $[\alpha-\gamma^*] = [\alpha]$ and thus $s(\alpha) = \psi^*(\alpha-\gamma^*) = \psi^*(\alpha) =  \psi(\alpha)$ by Proposition~\ref{spsigap}.
\end{proof}

\medskip\noindent
As another application of $s$-cuts, Definition-Lemma~\ref{Balphashift} below gives a method of producing a new $\psi$-map from an old $\psi$-map, while keeping the underlying ordered divisible abelian group and original contraction map the same. Recall that $\chi+\psi\circ\chi = \psi$ is the defining relation for the contraction map $\chi$ on $\Gamma^{<}$ in the asymptotic couple $(\Gamma,\psi)$.

\begin{definitionlemma}
\label{Balphashift}
Let $B\in\sded(\Psi)$ and $\epsilon\in\Gamma$ be such that $\psi(\epsilon)\in B$. Define the \textbf{$(B,\epsilon)$-shift} of $\psi$ to be the function $\widetilde{\psi}:\Gamma_{\infty}\to\Gamma_{\infty}$ such that
\[
\widetilde{\psi}(\alpha) = \begin{cases} \psi(\alpha) & \text{if $\psi(\alpha)<B$} \\ \psi(\alpha)+\epsilon & \text{if $\psi(\alpha)\in B$} \\ \infty & \text{if $\alpha = 0$.} \end{cases}
\]
Then $(\Gamma,\widetilde{\psi})$ is a divisible $H$-asymptotic couple with asymptotic integration such that $\chi+\widetilde{\psi}\circ\chi = \widetilde{\psi}$ on $\Gamma^{<}$.
\end{definitionlemma}
\begin{proof} We'll first show (HC). Suppose $0<\alpha<\beta$ and $\psi(\alpha)\in B$ and $\psi(\beta)<B$. Then by Corollary~\ref{successorid}, $\psi(\psi(\beta)-\psi(\alpha)) = s\psi(\beta)<B$ whereas $\psi(\epsilon)\in B$. By (HC) for $(\Gamma,\psi)$, it follows that $[\epsilon]<[\psi(\alpha)-\psi(\beta)]$ and thus $\psi(\alpha)-\psi(\beta)\geq -\epsilon$ since $\psi(\alpha)-\psi(\beta)>0$.
From this we get $\widetilde{\psi}(\alpha) = \psi(\alpha)+\epsilon\geq \psi(\beta) = \widetilde{\psi}(\beta)$. All other cases are trivial. 

(AC2) is clear. 

For (AC1), first suppose that $\alpha,\beta$ are such that $[\alpha]>[\beta]$. Then $\widetilde{\psi}(\alpha+\beta) = \widetilde{\psi}(\alpha) \geq \min(\widetilde{\psi}(\alpha),\widetilde{\psi}(\beta))$ by (HC) and (AC2). Otherwise, assume that $[\alpha]=[\beta]$ and $\psi(\alpha)=\psi(\beta)<B$ and $\psi(\alpha+\beta)\in B$. Then by a similar argument as for (HC) using $[\epsilon]<[\psi(\alpha+\beta)-\psi(\alpha)]$, we can show that $\widetilde{\psi}(\alpha+\beta) = \psi(\alpha+\beta)+\epsilon\geq \psi(\alpha) = \min(\widetilde{\psi}(\alpha),\widetilde{\psi}(\beta))$. All other cases are trivial.

Instead of verifying (AC3), by~\cite[Lemma 6.5.5]{mt} it is sufficient to show that the map $\gamma\mapsto \gamma+\widetilde{\psi}(\gamma):\Gamma^{>}\to\Gamma$ is strictly increasing. The main case to consider is $0<\alpha<\beta$ where $\psi(\alpha)\in B$ and $\psi(\beta)<B$. In this case, $[\beta]>[\alpha],[\epsilon]$ and so
\[
\psi(\alpha)< (\beta-\alpha-\epsilon)' = \beta-\alpha-\epsilon + \psi(\beta-\alpha-\epsilon) = \beta-\alpha-\epsilon+\psi(\beta)
\]
by (HC) and (AC3) for $(\Gamma,\psi)$. Rearranging terms gives us $\alpha+\psi(\alpha)+\epsilon<\beta+\psi(\beta)$, or rather $\alpha+\widetilde{\psi}(\alpha)<\beta+\widetilde{\psi}(\beta)$.

To show that $(\Gamma,\widetilde{\psi})$ has asymptotic integration, let $\widetilde{\Psi} := \widetilde{\psi}(\Gamma^{\neq})$. Suppose towards a contradiction that there is $\gamma\in\Gamma$ such that $\gamma = \sup\widetilde{\Psi}$. Since $\widetilde{\psi}(B)$ is cofinal in $\widetilde{\Psi}$, we have that $\gamma = \sup\widetilde{\psi}(B) = \sup \psi(B)+\epsilon = \sup \Psi+\epsilon$. Thus $\gamma-\epsilon = \sup \Psi$, a contradiction because $(\Gamma,\psi)$ has asymptotic integration.

For the claim about the contraction mapping, note that for all $\alpha\in\Gamma$, $\psi(\chi(\alpha)) = s\psi(\alpha)$. Thus $\psi(\alpha)<B$ iff $\psi(\chi(\alpha))<B$.
\end{proof}

\noindent
As a special case of Definition-Lemma~\ref{Balphashift}, we note that the $(\Psi,\epsilon)$-shift of $\psi$ is just a shift $(\Gamma,\psi+\epsilon)$ in the sense of~\cite[Pg. 978, Lemma(2)]{differentialvaluationII}. See also~\cite[\S6.5]{mt}.

\medskip\noindent
In general, if $(\Gamma,\widetilde{\psi})$ is a $(B,\epsilon)$-shift of $(\Gamma,\psi)$, then we do not expect these asymptotic couples, as $L_{AC}$-structures, to be elementarily equivalent. Indeed, if $(\Gamma,\psi)\models T_0$, then the $(\Psi,-s0)$-shift $(\Gamma,\widetilde{\psi})$ will not be a model of $T_0$ because $\min\widetilde{\Psi} = 0$ in that case. However, we do have the following:

\begin{prop}
Suppose $(\Gamma,\psi)\models T_0$ and $B\in\sded(\Psi)$ is such that $B\neq\Psi$ and $\epsilon\in\Gamma$ is such that $\psi(\epsilon)\in B$. Then the $(B,\epsilon)$-shift $(\Gamma,\widetilde{\psi})$ is also a model of $T_0$.
\end{prop}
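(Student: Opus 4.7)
The task is to verify the remaining $T_0$-axioms for $(\Gamma,\widetilde{\psi})$: that $\widetilde{\Psi} := \widetilde{\psi}(\Gamma^{\neq})$ has least element $\widetilde{s}0 > 0$, is a successor set whose successor function is $\widetilde{s}$, and that $\widetilde{s}:\widetilde{\Psi}\to\widetilde{\Psi}^{>\widetilde{s}0}$ is a bijection. Divisibility, $H$-type, and asymptotic integration are already granted by Definition-Lemma~\ref{Balphashift}.

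The first step is to describe $\widetilde{\Psi}$ explicitly. From the definition of the shift, $\widetilde{\Psi} = (\Psi\setminus B)\cup (B+\epsilon)$, and I claim the order separation $\Psi\setminus B < B+\epsilon$ holds. To see this, take $\delta\in\Psi\setminus B$ and $\beta\in B$; since the $s$-cut condition gives $s\delta\in\Psi\setminus B$, we have $s\delta<s\beta$, so Corollary~\ref{successorid} yields $\psi(\beta-\delta)=s\delta<\psi(\epsilon)$. By (HC), this means $[\beta-\delta]>[\epsilon]$, so $\beta-\delta>|\epsilon|\geq -\epsilon$ and hence $\delta<\beta+\epsilon$. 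Since $B\neq\Psi$ and $B$ is upward closed in $\Psi$, it cannot contain $s0=\min\Psi$, so $s0\in\Psi\setminus B$ and therefore $s0=\min\widetilde{\Psi}$. Taking $\gamma=-s0$, the fact that $\psi(-s0)=s0\notin B$ gives $\widetilde{\psi}(-s0)=s0$, so $\gamma+\widetilde{\psi}(\gamma)=0$, meaning $\int_{\widetilde{\psi}}0=-s0$ and $\widetilde{s}0=s0>0$. That handles the first three bullets of Definition~\ref{T0def}.

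Next I would pin down the successor structure on $\widetilde{\Psi}$ and verify it agrees with $\widetilde{s}$. For $\alpha\in\Psi\setminus B$ the $s$-cut condition places $s\alpha\in\Psi\setminus B$, and the separation $\Psi\setminus B<B+\epsilon$ ensures nothing from $B+\epsilon$ lies in $(\alpha,s\alpha)$; taking $\gamma=\alpha-s\alpha$ gives $\widetilde{\psi}(\gamma)=\psi(\gamma)=s\alpha$ (as $s\alpha\in\Psi\setminus B$), so $\widetilde{s}\alpha=s\alpha$ matches the immediate successor. For $\alpha=\delta+\epsilon\in B+\epsilon$ the immediate successor in $\widetilde{\Psi}$ is $s\delta+\epsilon$ (since $s\delta\in B$ by upward closure, and $B+\epsilon>\Psi\setminus B$ leaves no room for another element in between); taking $\gamma=\delta-s\delta$ I have $\psi(\gamma)=s\delta\in B$, so $\widetilde{\psi}(\gamma)=s\delta+\epsilon$ and $\gamma+\widetilde{\psi}(\gamma)=\delta+\epsilon=\alpha$, yielding $\widetilde{s}\alpha=s\delta+\epsilon$ as required. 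Bijectivity of $\widetilde{s}:\widetilde{\Psi}\to\widetilde{\Psi}^{>\widetilde{s}0}$ then follows piecewise: $s:\Psi\to\Psi^{>s0}$ restricts to a bijection $\Psi\setminus B\to (\Psi\setminus B)^{>s0}$ (using the $s$-cut condition and the fact that $s^{-1}(B)\cap\Psi\subseteq B$ follows from $s(\Psi\setminus B)\subseteq\Psi\setminus B$), and to a bijection $B\to B$ (since $B\subseteq\Psi^{>s0}$ and the same $s^{-1}$ argument applies), and these glue to the desired bijection via the translation $\beta\mapsto\beta+\epsilon$ on the $B$-piece.

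The main obstacle I anticipate is the ordering $\Psi\setminus B<B+\epsilon$, since every subsequent step depends on being able to describe $\widetilde{\Psi}$ as the concatenation of these two pieces in the correct order; establishing it requires combining the $s$-cut hypothesis, Corollary~\ref{successorid}, and (HC) into a single archimedean-class estimate. Once that separation is in hand, the remaining verifications are direct substitutions into the defining equation $\gamma+\widetilde{\psi}(\gamma)=\alpha$ and routine checks using the corresponding properties of $s$ on $\Psi$.
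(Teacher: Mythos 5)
Your argument is correct, but it takes a genuinely different route from the paper's.

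The paper's proof leans on two facts: the identity $s\circ\psi = \psi\circ\chi$ (valid in any $H$-asymptotic couple with asymptotic integration) and the observation, already recorded in Definition-Lemma~\ref{Balphashift}, that the shift leaves the contraction map unchanged, $\widetilde{\chi} = \chi$. Combining these gives the one-line computation
\[
\widetilde{s}(\widetilde{\psi}(\alpha)) = \widetilde{\psi}(\widetilde{\chi}(\alpha)) = \widetilde{\psi}(\chi(\alpha)) = \psi(\chi(\alpha))+\epsilon = s\psi(\alpha)+\epsilon \qquad \text{for } \psi(\alpha)\in B,
\]
and the order-isomorphism $\Psi\to\widetilde{\Psi}$ (identity on $\Psi\setminus B$, translation by $\epsilon$ on $B$) is treated as clear from the fact that $\psi$ and $\widetilde{\psi}$ induce the same partition of $\Gamma^{\neq}$ into archimedean classes. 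The paper therefore never writes down the separation $\Psi\setminus B < B + \epsilon$ explicitly. Your proof instead establishes that separation as a stand-alone lemma via Corollary~\ref{successorid} and (HC), and then computes $\int_{\widetilde{\psi}}\alpha$ and $\widetilde{s}\alpha$ piecewise directly from the defining relation $\gamma + \widetilde{\psi}(\gamma) = \alpha$, avoiding the identity $s\psi = \psi\chi$ entirely. Both work: the paper's route is shorter because it recycles $\widetilde{\chi}=\chi$, while yours is more self-contained and makes the order structure of $\widetilde{\Psi}$ fully explicit, which also cleanly packages the bijectivity check at the end. One small point of style rather than substance: the verification that $\widetilde{s}0=s0>0$ can alternatively be dispatched by noting $\psi$ and $\widetilde{\psi}$ agree on the archimedean class of $s0$, so the predecessor of $0$ under the new $\int$ is unchanged; your direct calculation with $\gamma=-s0$ is equivalent.
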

\begin{proof}
$(\Gamma,\widetilde{\psi})$ is a divisible $H$-asymptotic couple with asymptotic integration such that $\chi+\widetilde{\psi}\circ\chi = \widetilde{\psi}$. Let $\widetilde{s}$ be the successor function of $(\Gamma,\widetilde{\psi})$. It is clear that $\widetilde{\Psi}$ is a successor set with least element $s0 = \widetilde{s}>0$, since the order types of $\Psi$ and $\widetilde{\Psi}$ are the same and these $\Psi$-sets have at least the first copy of $\N$ in common.

\begin{claim}
Suppose $\alpha$ is such that $\psi(\alpha)\in B$. Then $\widetilde{s}(\widetilde{\psi}(\alpha)) = s\psi(\alpha)+\epsilon$.
\end{claim}
\begin{proof}[Proof of Claim]
By the relation $s\psi = \psi\chi$, which holds in every $H$-asymptotic couple with asymptotic integration, and the fact that $\widetilde{\chi} = \chi$, we have
\[
\widetilde{s}(\widetilde{\psi}(\alpha)) = \widetilde{\psi}(\widetilde{\chi}(\alpha)) = \widetilde{\psi}(\chi(\alpha)) = \psi(\chi(\alpha))+\epsilon = s(\psi(\alpha))+\epsilon.\qedhere
\]
\end{proof}
By the claim it follows that each $\alpha\in\widetilde{\Psi}$ has immediate successor $\widetilde{s}(\alpha)$ and that $\gamma\mapsto \widetilde{s}\gamma:\widetilde{\Psi}\to\widetilde{\Psi}^{>s0}$ is a bijection.
\end{proof}

\subsection{Relation to Precontraction Groups}

In this subsection we will make a remark about the relationship between our asymptotic couples and the precontraction groups of Kuhlmann. Precontraction groups arise as the value groups of certain ordered exponential fields, and in this way they are similar in spirit to asymptotic couples which arise as the value groups of certain valued differential fields. We refer the interested reader to~\cite{kuhlmann1,kuhlmann2} for a treatment of the model theory of precontraction groups and to~\cite{SKuhlmann} for their connection to ordered exponential fields. For our purposes, it suffices to recall the definition:

\begin{definition}
A \textbf{precontraction group} is a pair $(\Gamma,\chi)$ where $\Gamma$ is an ordered abelian group and $\chi:\Gamma\to\Gamma$ satisfies for all $\alpha,\beta\in\Gamma$:
\begin{enumerate}
\item $\chi(\alpha) = 0 \Longleftrightarrow \alpha = 0$;
\item $\alpha\leq\beta\Longrightarrow \chi(\alpha)\leq\chi(\beta)$;
\item $\chi(-\alpha) = -\chi(\alpha)$;
\item $[\alpha] = [\beta]$ and $\sign(\alpha) = \sign(\beta) \Longrightarrow \chi(\alpha) = \chi(\beta)$.
\end{enumerate}
If in addition, for all $\alpha\in\Gamma^{\neq}$:
\begin{enumerate}
  \setcounter{enumi}{4}
\item $|\alpha|>|\chi(\alpha)|$
\end{enumerate}
then $(\Gamma,\chi)$ is said to be a \textbf{centripetal} precontraction group. Finally, we say that a precontraction group $(\Gamma,\chi)$ is \textbf{divisible} if the underlying ordered abelian group $\Gamma$ is divisible.

\medskip\noindent
We let $L_{PG} = \{0,+,-,<,\chi\}$ denote the natural first-order language of precontraction groups and construe all precontraction groups $(\Gamma,\chi)$ as $L_{PG}$-structures in the obvious way.
\end{definition}

\medskip\noindent
If $(\Gamma,\psi)$ is a divisible $H$-asymptotic couple with asymptotic integration, then we may associate to $(\Gamma,\psi)$ a divisible centripetal precontraction group $(\Gamma,\chi_{PG})$ by defining for all $\alpha\in\Gamma$,
\[
\chi_{PG}(\alpha) = \begin{cases}
\chi(\alpha) & \text{if $\alpha<0$,} \\
0 & \text{if $\alpha = 0$,} \\
-\chi(-\alpha) & \text{if $\alpha>0$,}
\end{cases}
\]
where $\chi = \int\psi:\Gamma^{<}\to\Gamma^{<}$ is the contraction map of $(\Gamma,\psi)$ as defined in Definition~\ref{functionsdefs}. Thus every divisible $H$-asymptotic couple with asymptotic integration yields a divisible centripetal precontraction group as a reduct. Conversely, it is worth considering whether this process is reversible, i.e., given a divisible centripetal precontraction group $(\Gamma,\chi_{PG})$, can one define a $\psi$-map on $\Gamma$ in the $L_{PG}$-structure $(\Gamma,\chi_{PG})$ such that $(\Gamma,\psi)$ is a divisible $H$-asymptotic couple with asymptotic integration and such that the contraction map of $(\Gamma,\psi)$ is $\chi_{PG}|\Gamma^{<}$. It turns out this is impossible for models of $T_0$:

\begin{prop}
\label{precontractionprop}
In no precontraction group $(\Gamma,\chi)$ can one define, even allowing parameters, a function $\psi:\Gamma^{\neq}\to\Gamma$ such that $(\Gamma,\psi)$ is a model of $T_0$ and $\chi+\psi\circ\chi=\psi$ on $\Gamma^{<}$.
\end{prop}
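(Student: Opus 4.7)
Plan: argue by contradiction using the $(B,\epsilon)$-shift from Definition-Lemma~\ref{Balphashift}. I would assume for contradiction that some $L_{PG}$-formula $\phi(x,y;\bar a)$ with parameters $\bar a$ from $\Gamma$ defines a function $\psi:\Gamma^{\neq}\to\Gamma$ in $(\Gamma,\chi)$ such that $(\Gamma,\psi)\models T_0$ and $\chi+\psi\circ\chi=\psi$ on $\Gamma^{<}$. The latter relation forces $\chi$ to coincide with the precontraction map $\chi_{PG}$ canonically associated to $(\Gamma,\psi)$, so the precontraction group $(\Gamma,\chi)$ is literally the reduct of the $L_{AC}$-structure $(\Gamma,\psi)$.

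First I would pass to a sufficiently saturated elementary extension $\M$ of $(\Gamma,\psi)$ in $L_{\log}$; since $L_{PG}$ is a sub-language whose only primitive $\chi$ is $L_{\log}$-definable, the $L_{PG}$-reduct $(\M,\chi_{PG})$ is an $L_{PG}$-elementary extension of $(\Gamma,\chi)$, and $\phi(x,y;\bar a)$ continues to define the $\psi$ of $\M$ there. Using saturation I would then choose an $s$-cut $B\in\sded(\Psi_{\M})\setminus\{\Psi_{\M}\}$ sitting strictly above $s(\langle\bar a\rangle^{\neq})\cup\psi(\langle\bar a\rangle^{\neq})$, together with an $\epsilon\in\M^{\neq}$ satisfying $\psi(\epsilon)\in B$; this is possible because $\langle\bar a\rangle$ is small whereas $\Psi_{\M}$ contains copies of $\Z$ arbitrarily far up.

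Next, invoking Definition-Lemma~\ref{Balphashift} together with the subsequent Proposition, the $(B,\epsilon)$-shift $\tilde\psi$ of $\psi$ produces a model $(\M,\tilde\psi)\models T_0$ with the \emph{same} precontraction map $\chi_{PG}$; and $\tilde\psi\neq\psi$, since $\epsilon\neq 0$ and $B$ is nonempty. A short computation, using the fact that $\tilde\psi$ agrees with $\psi$ on the preimage of $\Psi\setminus B$ together with uniqueness of asymptotic integrals, shows that $\tilde\psi$, $\tilde\int$, and $\tilde s$ coincide with $\psi$, $\int$, and $s$ on $\langle\bar a\rangle$. Hence $\bar a$ realizes the same $L_{\log}$-type in $(\M,\psi)$ and $(\M,\tilde\psi)$, and, since both are saturated models of the complete theory $T_{\log}$ of the same cardinality, a standard back-and-forth produces an $L_{\log}$-isomorphism $\tau:(\M,\psi)\to(\M,\tilde\psi)$ fixing $\bar a$ pointwise.

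Finally, because $\chi_{PG}$ is a common reduct, $\tau$ is also an $L_{PG}$-automorphism of $(\M,\chi_{PG})$ fixing $\bar a$ pointwise; by definability of $\psi$ via $\phi(x,y;\bar a)$ in $(\M,\chi_{PG})$, this forces $\psi(\tau(\alpha))=\tau(\psi(\alpha))$ for all $\alpha\in\M^{\neq}$. But viewing the same $\tau$ as an $L_{\log}$-isomorphism $(\M,\psi)\to(\M,\tilde\psi)$ also gives $\tilde\psi(\tau(\alpha))=\tau(\psi(\alpha))$, and surjectivity of $\tau$ then forces $\psi=\tilde\psi$ on $\M$, contradicting $\psi\neq\tilde\psi$. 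The main obstacle is arranging the $(B,\epsilon)$-shift to \emph{simultaneously} move $\psi$ genuinely somewhere in $\M$ while leaving $\langle\bar a\rangle$ entirely untouched, and then verifying the required compatibility of $\tilde\int$ and $\tilde s$ with $\int$ and $s$ on $\langle\bar a\rangle$ so that the $L_{\log}$-types of $\bar a$ really do match.
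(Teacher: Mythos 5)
Your overall strategy is the paper's: pass to a saturated model in which $\phi(x,y;\bar a)$ still defines $\psi$, choose an $s$-cut $B$ above $\Psi_{\langle\bar a\rangle}$ so that the $(B,\epsilon)$-shift $\widetilde\psi$ fixes $\langle\bar a\rangle$ pointwise while moving $\psi$ somewhere, note that the contraction map is unchanged, and derive a contradiction with definability. The setup is correct, and your check that $\widetilde\psi$ and the shifted integral and successor functions agree with $\psi$, $s$ on $\langle\bar a\rangle$ --- hence that $\bar a$ has the same $L_{\log}$-type in $(\M,\psi)$ and $(\M,\widetilde\psi)$, since $\langle\bar a\rangle$ is a common elementary substructure of both by the universal axiomatization and model completeness of $T_{\log}$ --- is exactly the right pivot.

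The gap is in your last step: you run a back-and-forth between $(\M,\psi)$ and $(\M,\widetilde\psi)$ ``since both are saturated models of the same cardinality,'' but saturation of $(\M,\widetilde\psi)$ is asserted, not proved, and it does not come for free. The shift is built from the cut $B$, and a nontrivial $s$-cut is never of the form $\Psi^{\geq\beta}$ with $\beta\in\Psi$ (if $s0<\beta$ then $\beta=s\gamma$ with $\gamma<\beta$, violating $s(\Psi\setminus B)\subseteq\Psi\setminus B$), so $B$ is in general not definable in $(\M,\psi)$; consequently $(\M,\widetilde\psi)$ is not a reduct of a definitional expansion of the saturated structure $(\M,\psi)$, and its saturation would require a separate argument. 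Fortunately the detour through $\tau$ is unnecessary: once $\bar a$ has the same $L_{\log}$-type in the two structures, note that ``$\phi(x,y;\bar a)$ defines the graph of the $\psi$-function'' becomes a single $L_{\log}$-sentence with parameters $\bar a$ after each occurrence of $\chi$ in $\phi$ is replaced by its $L_{AC}$-definition from $\psi$ (namely $\gamma\mapsto\int\psi(\gamma)$ on $\Gamma^{<}$, extended oddly). This sentence holds in $(\M,\psi,\bar a)$, hence in $(\M,\widetilde\psi,\bar a)$; since $\widetilde\chi=\chi$, it says that $\phi(x,y;\bar a)$ defines both $\psi$ and $\widetilde\psi$ in the single $L_{PG}$-structure $(\M,\chi)$, so $\psi=\widetilde\psi$, the desired contradiction. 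This is precisely how the paper concludes.
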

\begin{proof}
Suppose $(\Gamma,\psi)\models T_0$ is such that we can define $\psi$ in $(\Gamma,\chi)$. We may assume that $(\Gamma,\psi)$ is $\aleph_0$-saturated. Take $B\in\sded(\Psi)$ large enough so that it is to the right of the $\Psi$-set of the definable closure of all the finitely-many parameters needed from $\Gamma$ to define $\psi$ in $(\Gamma,\chi)$. Consider any $(B,\epsilon)$-shift $\widetilde{\psi}$ of $\psi$ such that $\psi(\epsilon)\in B$. Then $(\Gamma,\psi)\equiv (\Gamma,\widetilde{\psi})$ and $(\Gamma,\chi) = (\Gamma,\widetilde{\chi})$. By completeness of $T_0$, the same formula that defines $\psi$ in $(\Gamma,\chi)$ must define $\widetilde{\psi}$ in $(\Gamma,\widetilde{\chi})$ and so $\psi = \widetilde{\psi}$, a contradiction.
\end{proof}

\medskip\noindent
Our method of proof for Proposition~\ref{precontractionprop} mirrors the proof given in~\cite[Prop 5.1]{someremarks} for the corresponding result about closed asymptotic couples. A \textbf{closed asymptotic couple} is a divisible $H$-asymptotic couple with asymptotic integration such that $(\Gamma^{<})' = \Psi$ (see~\cite{closedasymptoticcouples}). There they use essentially the same trick with $(B,\epsilon)$-shifts, except they consider iterates of $\psi$ instead of iterates of $s$. However, by Corollary~\ref{sequalspsi}, one can see that this is essentially the same notion for elements $\alpha\ll 0$.

\medskip\noindent
Furthermore, it seems likely that this trick can be used for any theory $\Th(\Gamma,\psi)$ of interest, where $(\Gamma,\psi)$ is a divisible $H$-asymptotic couple with asymptotic integration. Provided that the first order theory of $(\Gamma,\psi)$ is preserved under sufficiently subtle $(B,\epsilon)$-shifts, the same proof can be used. This leads us to the following:

\begin{conjecture}
\label{precontractiongroupconjecture}
In no nontrivial precontraction group $(\Gamma,\chi)$ can one define, even allowing parameters, a function $\psi:\Gamma^{\neq}\to\Gamma$ such that $(\Gamma,\psi)$ is an $H$-asymptotic couple and $\chi+\psi\circ\chi = \psi$ on $\Gamma^{<}$.  
\end{conjecture}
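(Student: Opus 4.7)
The plan is to lift the strategy of Proposition~\ref{precontractionprop} from the specific theory $T_0$ to the general setting, keeping the same core idea: a $(B,\epsilon)$-shift of $\psi$ changes $\psi$ while fixing $\chi$, so if $\psi$ were definable from $\chi$ (with parameters), one could choose $B$ beyond those parameters and derive a contradiction. Concretely, suppose $\psi$ were defined in $(\Gamma,\chi)$ by an $L_{PG}$-formula $\phi(x,y;\bar c)$ with $\bar c\in\Gamma$. First I would pass to a sufficiently saturated elementary extension (and, if asymptotic integration is not yet assumed in the statement, to an extension where it holds, which is possible by standard embedding lemmas for $H$-asymptotic couples such as Lemma~\ref{rhoZ}). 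Then I would fix $B\in\sded(\Psi)$ strictly above the $\Psi$-set of the substructure generated by $\bar c$, choose $\epsilon\ne 0$ with $\psi(\epsilon)\in B$, and form the $(B,\epsilon)$-shift $\widetilde\psi$ from Definition-Lemma~\ref{Balphashift}. By that lemma the contraction map is unchanged, so the reduct $(\Gamma,\chi)$ agrees with $(\Gamma,\widetilde\chi)$, and hence $\phi(x,y;\bar c)$ defines $\widetilde\psi$ in $(\Gamma,\chi)$ as well.

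The deduction step is then: if $(\Gamma,\psi)\equiv(\Gamma,\widetilde\psi)$ in $L_{AC}$, then for each individual $\gamma\in\Gamma$ the formula $\phi(\gamma,y;\bar c)$ has the same unique solution in both structures, forcing $\psi=\widetilde\psi$ and contradicting $\epsilon\ne 0$. I would also want to pin down precisely what ``nontrivial'' must mean for the conjecture to even have teeth: at minimum $\Psi$ should be nonempty and $\sded(\Psi)$ should contain cuts $B$ admitting a witnessing $\epsilon$, which rules out only genuinely degenerate cases.

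The main obstacle, and the reason this is stated only as a conjecture, is the elementary equivalence $(\Gamma,\psi)\equiv(\Gamma,\widetilde\psi)$ for an \emph{arbitrary} $H$-asymptotic couple with asymptotic integration. In the $T_0$ case this is free from completeness of $T_0$; in general it must be proved. The natural route is a back-and-forth argument establishing $L_{\infty,\omega}$-equivalence of $(\Gamma,\psi)$ and $(\Gamma,\widetilde\psi)$ over $\langle \bar c\rangle$, relying on embedding lemmas of the flavor of Lemma~\ref{Zinmiddle} and Lemma~\ref{rhoZ} to match pieces of the two $\Psi$-sets: below $B$ the two $\psi$-maps literally agree, while above $B$ the shift by $\epsilon$ should be absorbable by a translation of $\Psi$-tails, provided that $B$ was chosen high enough that no finite $L_{AC}$-type with parameters from $\langle\bar c\rangle$ can distinguish $\Psi^{\geq B}$ from $\Psi^{\geq B}+\epsilon$.

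A sensible intermediate program, before attacking the full conjecture, would be to verify it for specific theories already in the literature --- most notably the closed asymptotic couples of~\cite{closedasymptoticcouples}, where the analogous result is~\cite[Prop 5.1]{someremarks} and where Corollary~\ref{sequalspsi} shows that $\psi$ and $s$ essentially agree far from the origin --- and then to isolate the structural property of an $H$-asymptotic couple $(\Gamma,\psi)$ responsible for invariance of $\Th(\Gamma,\psi)$ under $(B,\epsilon)$-shifts for sufficiently high $B$. Once such an invariance principle is formulated abstractly, the shift-absoluteness argument above goes through uniformly and yields the conjecture for any $H$-asymptotic couple satisfying it.
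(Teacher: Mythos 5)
This statement is an open \emph{conjecture} in the paper, not a proven result: the paper offers no proof, and item (1) of the Conclusion explicitly lists settling it as unresolved. Your proposal is therefore not a proof either, and to your credit you say so: the step you flag as the ``main obstacle'' --- establishing $(\Gamma,\psi)\equiv(\Gamma,\widetilde\psi)$ (indeed, an elementary equivalence over the parameters $\bar c$) for an arbitrary $H$-asymptotic couple after a $(B,\epsilon)$-shift --- is precisely the ingredient the paper lacks. The paper itself sketches the same program in the paragraph preceding the conjecture (``Provided that the first order theory of $(\Gamma,\psi)$ is preserved under sufficiently subtle $(B,\epsilon)$-shifts, the same proof can be used''), so your plan faithfully reproduces the author's intended line of attack, including the comparison with the closed asymptotic couples of~\cite{someremarks}. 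But reproducing the program is not settling the conjecture, and the invariance principle you would need remains unproved.

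Two further gaps in your reduction deserve naming. First, the conjecture quantifies over \emph{all} nontrivial precontraction groups and asks only that $(\Gamma,\psi)$ be an $H$-asymptotic couple satisfying $\chi+\psi\circ\chi=\psi$ on $\Gamma^{<}$; neither divisibility nor asymptotic integration is assumed. Definition-Lemma~\ref{Balphashift}, your main tool, is proved only for divisible $H$-asymptotic couples \emph{with} asymptotic integration (its proof uses Corollary~\ref{successorid}, the successor function, and the cofinality of $\Psi$ in itself), so the shift construction is not available in the generality the conjecture demands. Second, your proposed fix --- passing to an extension where asymptotic integration holds via the embedding lemmas --- does not work as stated: Lemmas~\ref{Zinmiddle} and~\ref{rhoZ} produce extensions, not \emph{elementary} extensions of the precontraction-group reduct, so the hypothesis that $\phi(x,y;\bar c)$ defines $\psi$ in $(\Gamma,\chi)$ does not transfer to the larger structure. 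Any honest attack on the conjecture must either build the shift machinery without asymptotic integration or find a definability-preserving way to reduce to the case where it holds.
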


%
%

\section{Conclusion}
\label{conclusion}
\noindent
We conclude with a list of unresolved issues and things left to do:

\begin{enumerate}
\item Settle Conjecture~\ref{precontractiongroupconjecture}.
\item Describe all definable functions $\Gamma\to\Gamma_{\infty}$, where $(\Gamma,\psi)$ is a model of $T_{\log}$.
\item Give a more concrete proof of NIP for $T_{\log}$ which avoids an absoluteness argument.
\item Is $T_{\log}$ \emph{distal}? Distal theories form a subclass of NIP theories which in some sense are purely unstable. See~\cite{distal} for a definition of distality.
\item Is $(\Gamma,\psi)$ \emph{quasi-weakly-o-minimal}, i.e., any definable subset is a finite boolean combination of convex sets and $0$-definable sets? For more information on this property in the o-minimal setting, see~\cite{quasiominimal}.
\item Is $(\Gamma,\psi)$ \emph{d-minimal}, i.e., any definable subset of $\Gamma$ is a union of an open set and finitely many discrete sets? See~\cite[\S3.4]{dminimal} for a discussion of d-minimality in the context of expansions of the real field.
\end{enumerate}

\section*{Acknowledgements}
\noindent
The author would like to thank Franz-Viktor Kuhlmann for his hospitality during a visit to University of Saskatchewan in the late Fall of 2014 and for suggesting the author look into the Steinitz exchange property. The investigation into the relationship with precontraction groups also arose from discussions with Kuhlmann and Koushik Pal during that visit.
The author would also like to thank Justin Moore and Chris Laskowski for their assistance in navigating some of the consistency issues arising in Section~\ref{NIPsettheory}, and additionally thanks Laskowski for the invitation to visit University of Maryland, College Park over Thanksgiving 2014.
Above all, the author would like to thank Lou van den Dries for his guidance, encouragement, and numerous discussions around the topics of this paper and to Qingci An, Jacob Harris and Konrad Wrobel for the illustrations. Finally, the author would like to thank the referee for the many helpful comments and suggestions.

\bibliographystyle{amsalpha}	
\bibliography{refs}

\end{document}